\numberwithin{equation}{section}
\newtheorem{theorem}{Theorem}[section]
\newtheorem{lemma}[theorem]{Lemma}
\newtheorem{proposition}[theorem]{Proposition}
\newtheorem{corollary}[theorem]{Corollary}
\theoremstyle{definition}
\newtheorem{definition}[theorem]{Definition} 
\newtheorem{remark}[theorem]{Remark}
\newtheorem{example}[theorem]{Example}
\begin{document}


\title[Algebraic methods
for parameterized codes]{Algebraic methods for parameterized codes
and invariants of vanishing
ideals over finite fields} 

\author{Carlos Renter\'\i a-M\'arquez}
\address{
Departamento de Matem\'aticas\\
Escuela Superior de F\'\i sica y
Matem\'aticas\\
Instituto Polit\'ecnico Nacional\\
07300 Mexico City, D.F.}
\email{renteri@esfm.ipn.mx}
\thanks{The first author was partially supported by 
COFAA-IPN and SNI. The second author was partially supported by a grant
of CNPq. The third author was partially supported by CONACyT 
grant 49251-F and SNI}

\author{Aron Simis}
\address{
Departamento de Matem\'atica\\
Universidade Federal
de Pernambuco\\
50740-540 Recife\\ Pe\\ Brazil
}\email{aron@dmat.ufpe.br}

\author{Rafael H. Villarreal}
\address{
Departamento de
Matem\'aticas\\
Centro de Investigaci\'on y de Estudios
Avanzados del
IPN\\
Apartado Postal
14--740 \\
07000 Mexico City, D.F.
}
\email{vila@math.cinvestav.mx}

\subjclass[2000]{Primary 13P25; Secondary 14G50, 14G15, 11T71, 94B27, 94B05.} 

\begin{abstract} Let $K=\mathbb{F}_q$ be a finite field with $q$
elements and let $X$ be a subset of a projective space ${\mathbb
P}^{s-1}$, over the field $K$, parameterized by Laurent monomials. 
Let $I(X)$ be the vanishing ideal of $X$. Some
of the main contributions of this paper are in determining 
the structure of $I(X)$ to compute some of its invariants. It is
shown that $I(X)$ is a lattice ideal. We introduce the notion of a 
parameterized code arising from
$X$ and present algebraic methods to compute and study its dimension,
length and minimum distance. 
For a parameterized code, arising from a
connected graph, we are able to compute its length and to make our
results more precise.  
If the graph
is non-bipartite, we show an upper bound for the minimum
distance. 
\end{abstract}

\maketitle

\section{Introduction}

Let $K=\mathbb{F}_q$  be a finite field with $q$ elements and 
let $y^{v_1},\ldots,y^{v_s}$ be a finite set of Laurent
monomials.  Given $v_i=(v_{i1},\ldots,v_{in})\in\mathbb{Z}^n$, we set 
$$
y^{v_i}=y_1^{v_{i1}}\cdots y_n^{v_{in}},\ \ \ \ i=1,\ldots,s,
$$
where $y_1,\ldots,y_n$ are the indeterminates of a ring of Laurent
polynomials with coefficients in $K$. 

An object of study here is the following set
parameterized  by these monomials 
$$
X:=\{[(x_1^{v_{11}}\cdots x_n^{v_{1n}},\ldots,x_1^{v_{s1}}\cdots
x_n^{v_{sn}})]\, \vert\, x_i\in K^*\mbox{ for all
}i\}\subset\mathbb{P}^{s-1},
$$
where $K^*=K\setminus\{0\}$ and $\mathbb{P}^{s-1}$ is a projective
space over the field $K$. Following \cite{afinetv} we call $X$ an 
{\it algebraic toric set\/} parameterized  by
$y^{v_1},\ldots,y^{v_s}$. We are especially interested in measuring 
the size of $X$, in terms of $q$, $n$ and $s$, because $|X|$ is the
length of the linear codes that we will introduce and examine here.
    
Let $S=K[t_1,\ldots,t_s]=\oplus_{d=0}^\infty S_d$ be a polynomial ring 
over the field $K$ with the standard grading. Another object of study
is the graded ideal $I(X)\subset S$ generated by the  
homogeneous polynomials of $S$ that vanish on $X$. The ideal $I(X)$ is
called the {\it vanishing ideal\/} of $X$.    

Some of the main contributions of this paper are in determining 
the structure of $I(X)$ to compute some of its invariants. The other main
contributions are estimates (in certain cases 
formulas) of the basic parameters of certain linear codes. 

The main application we foresee is to algebraic coding theory because
our results can be used to study the performance of a new class of
evaluation codes that 
we now introduce. Let $[P_1],\ldots,[P_m]$ be the points of $X$ and
let $f_0(t_1,\ldots,t_s)=t_1^d$. The {\it 
evaluation map\/} 
$$
{\rm ev}_d\colon S_d=K[t_1,\ldots,t_s]_d\rightarrow K^{|X|},\ \ \ \ \ 
f\mapsto \left(\frac{f(P_1)}{f_0(P_1)},\ldots,\frac{f(P_m)}{f_0(P_m)}\right)
$$
defines a linear map of
$K$-vector spaces. The image of ${\rm ev}_d$, denoted by $C_X(d)$,
defines a {\it linear code} that we call a {\it parameterized code\/} of
order $d$. By a {\it linear code\/} we mean a linear subspace of
$K^{|X|}$. The kernel of ${\rm ev}_d$ is the homogeneous part
$I(X)_d$ of degree $d$ of $I(X)$. Therefore there is an isomorphism of $K$-vector spaces
$$S_d/I(X)_d\simeq C_X(d).$$

The {\it dimension\/} and the {\it length\/} of $C_X(d)$ are given by
$\dim_K C_X(d)$ and $|X|$ 
respectively. We will provide algebraic
methods to compute and study the dimension and the 
length of $C_X(d)$, which are two of the basic parameters of a linear
code. A third basic parameter is the {\it minimum
distance\/} of $C_X(d)$, which is given by 
$\delta_d=\min\{\|v\|
\colon 0\neq v\in C_X(d)\}$, where $\|v\|$ is the number of non-zero
entries of $v$. The basic parameters of $C_X(d)$ are related by the 
Singleton bound for the minimum distance:
$$
\delta_d\leq |X|-\dim_KC_X(d)+1.
$$

A good parameterized code should have large $|X|$ and  
with $\dim_KC_X(d)/|X|$ and $\delta_d/|X|$ as large as possible. 
Evaluation codes associated to a projective torus are called {\it 
generalized Reed-Solomon\/} codes \cite{GRH}. Parameterized
codes are a natural 
extension of this sort of codes. Some special families of
evaluation codes 
have been extensively studied, including several 
variations of Reed-Muller codes 
\cite{duursma-renteria-tapia,gold-little-schenck,GR,GRT,lachaud,renteria-tapia-ca}.

Two of the basic parameters of $C_X(d)$ can be expressed 
using Hilbert functions of standard graded algebras \cite{Sta1} as we
now  explain. Recall that the
{\it Hilbert function\/} of
$S/I(X)$ is given by 
$$H_X(d):=\dim_K\, 
(S/I(X))_d=\dim_K\, 
S_d/I(X)_d=\dim_KC_X(d).$$

The unique polynomial $h_X(t)\in \mathbb{Z}[t]$ such that $h_X(d)=H_X(d)$ for
$d\gg 0$ is called the {\it Hilbert polynomial\/} of  
$S/I(X)$. In our situation $h_X(t)$ is a non-zero constant.
Furthermore $h_X(d)=|X|$ for $d\geq |X|-1$, see \cite[Lecture
13]{harris}. This means that $|X|$ equals the {\it degree\/} 
of $S/I(X)$. Thus $H_X(d)$ and ${\rm deg}\, S/I(X)$ equal the
dimension and the length of $C_X(d)$ respectively.

The results of this paper will allow to compute the dimension
and the length of $C_X(d)$ using Hilbert functions. In certain
interesting cases we show a nice 
formula for the length. For algebraic toric sets 
arising from combinatorial structures, we are able to estimate the
length in terms of $n$, $q$, and the rank of a certain subgroup of
$\mathbb{Z}^{n+1}$. When $C_X(d)$ arises from a connected non-bipartite graph, we
will show an upper bound for the minimum distance and
compare this bound with the Singleton bound (see
Section~\ref{minimum-distance-section}).  

The contents of this paper are as follows. The main theorems in 
Section \ref{section-parameterized} are algebraic expressions for
$I(X)$, which can be used to extract information about the basic
parameters of $C_X(d)$ using
Gr\"obner bases. Before introducing the 
theorems, recall that an additive subgroup of $\mathbb{Z}^s$ 
is called a {\it lattice\/}. A {\it lattice
ideal\/} of $S$ 
is an 
ideal of the form
$$
I(\mathcal{L}):=(\{t^{a}-t^{b}\vert\, 
a,b\in\mathbb{N}^s\mbox{ with }a-b\in\mathcal{L}\})\subset S
$$
for some lattice $\mathcal{L}\subset \mathbb{Z}^s$. 
A polynomial of the form $t^a-t^b$,
with $a,b\in\mathbb{N}^s$, is called a {\it binomial}  
of $S$. An ideal generated 
by binomials is called a {\it binomial ideal\/}. The concept of
a lattice ideal is a natural generalization of a toric ideal
\cite[Corollary~7.1.4]{monalg}.  

In Theorem~\ref{ipn-ufpe-cinvestav} we show that $I(X)$ is a 
radical Cohen-Macaulay lattice ideal of dimension $1$. Moreover, if 
$v_i\in\mathbb{N}^n$ for all $i$, we prove the equality 
$$
I(X)=(t_1-y^{v_1}z,\ldots,t_s-y^{v_s}z,
y_1^{q-1}-1,\ldots,y_n^{q-1}-1)\cap S,
$$
where $z$ is a new indeterminate. A similar statement holds for
arbitrary $v_i$'s (see Theorem~\ref{ipn-ufpe-cinvestav-1}). 
In light of this result, we can compute the reduced Gr\"obner 
basis of $I(X)$, with respect to any term order of the monomials of
$S$, using the  
computer algebra system {\em Macaulay\/}$2$
\cite{computations-macaula2,mac2}. Thus, we can compute the Hilbert
function and the degree of $S/I(X)$, i.e., we can compute the  
dimension and the length of $C_X(d)$.

We present a different expression for $I(X)$---via a
saturation process---valid for a wide class
of algebraic toric sets (see Theorem~\ref{vila-dictaminadora} and
Corollary~\ref{sunday-morning-06-09-09}).   
As a consequence, if 
$$
\mathbb{T}=\{[(x_1,\ldots, x_s)]\vert\, x_i\in
K^*\mbox{ for all }i\}
\subset\mathbb{P}^{s-1}
$$ 
is a {\it projective torus\/}, then
$I(\mathbb{T})=(\{t_i^{q-1}-t_1^{q-1}\}_{i=2}^s)$ (see
Corollary~\ref{sept15-09}). This equality was first 
shown in \cite{GRH}. Then we obtain a family of algebraic
toric sets---arising from connected graphs---where $I(X)$ can be
computed using a saturation process (see 
Corollary~\ref{sept29-09-2}).  

In Section~\ref{degree-of-coordinate-ring} we focus on the computation
of $|X|$, the length of $C_X(d)$. We uncover a direct method, 
based on integer programming techniques, to compute $|X|$ 
(see Proposition~\ref{sept29-09}). Under certain conditions
we prove that $(q-1)^{r-1}$ divides the length of $C_X(d)$, where $r$
is the rank of the subgroup generated by $\{(v_i,1)\}_{i=1}^s$ (see
Theorem~\ref{sept23-09}).  In some cases---when $X$ comes from a
connected graph---we give a formula for the 
length of $C_X(d)$ (see Corollary~\ref{sept29-09-1}). 

The elements of $C_X(d)$ can be interpreted as rational functions 
on $X$. For this reason, in Section~\ref{vanishing-ideal-projective},
we study  
the geometric
structure of $X$. Let $I_\mathcal{A}$ be the {\it toric ideal\/} of
$\mathcal{A}=\{v_1,\ldots,v_s\}$,  
i.e., $I_\mathcal{A}$ is the prime ideal
of $S$ of polynomial relations of $y^{v_1},\ldots,y^{v_s}$. We call
$\mathcal{A}$ {\it homogeneous\/} if $\mathcal{A}$ lies on an affine 
hyperplane not containing the origin. We prove that if
$\mathcal{A}$ is homogeneous, then the projective toric variety
$V(I_\mathcal{A})$ intersected with a projective torus
$\mathbb{T}\subset\mathbb{P}^{s-1}$ is always parameterized  by
Laurent monomials (see Theorem~\ref{sept29-09-3}(i)). This gives a method to produce projective
varieties parameterized  by Laurent monomials. As a byproduct, letting
$V_\mathcal{A}$ denote $V(I_\mathcal{A})\cap\mathbb{T}$, our results
allow to 
compute $I({V_\mathcal{A}})$ using Gr\"obner bases (see
Theorem~\ref{sept29-09-3}(ii)). As we will see, often an algebraic toric set $X$ 
is in fact a projective variety defined by binomials (see
Proposition~\ref{sept29-09-5}). In particular, we obtain the equality $X=V_\mathcal{A}$ for any
$\mathcal{A}$ arising from the edges of a connected graph. As a consequence, 
we show a finite Nullstellensatz (see Corollary~\ref{finite-nullstellensatz}). 

The dimension of $C_X(d)$ is increasing, as a function of 
$d$, until it reaches a constant value
\cite{duursma-renteria-tapia,geramita-cayley-bacharach}. We observe
that the minimum distance of $C_X(d)$ has the opposite behaviour: it is decreasing, as a
function of $d$, until it reaches a constant value (see
Proposition~\ref{minimum-distance-behaviour}).

Finally, in Section~\ref{minimum-distance-section}, we present an
application of our results and techniques to algebraic coding theory.
We show upper bounds for the minimum 
distance of parameterized codes arising from  
a connected non-bipartite graph (see Theorem~\ref{carlos-aron-vila}).
The geometric 
perspective of 
Section~\ref{vanishing-ideal-projective} plays a role
here. A comparison between our bound
and the Singleton bound is given (see
Remark~\ref{comparison-remark} and Example~\ref{k5}). We give an
explicit formula for the minimum distance of  
$C_X(d)$ when $X$ is a projective torus in $\mathbb{P}^2$ (see
Proposition~\ref{minimum-distance-p2}). Part of this formula was
already known \cite{GRH}; our contribution here is to use a result of
\cite{hansen} together with the proof of
Theorem~\ref{carlos-aron-vila} 
to treat the cases not covered in \cite{GRH}.

For all unexplained
terminology and additional information,  we refer to
\cite{cca,Stur1} (for the theory of binomial and toric ideals),
\cite{Eisen,Vas1} (for computational commutative algebra), 
\cite{Boll} (for graph theory), and
\cite{MacWilliams-Sloane,stichtenoth,tsfasman} (for the theory of
error-correcting codes and linear codes).

\section{The ideal of an algebraic toric set parameterized  by
monomials}\label{section-parameterized}

We continue to use the notation and definitions used in the
introduction. Here we study the structure of the graded
ideal $I(X)$ and show algebraic methods to compute a finite set of
binomials generating $I(X)$. We begin this section by introducing $X^*$,
the affine companion of $X$, that shares some of the properties of
$X$, such as being a multiplicative group. Some of our results
will admit  affine versions for $X^*$ as well. However, as a matter 
of staying focused, we will deal mostly with $X$ while $X^*$ will play
by and large an auxiliary role.

Let $K=\mathbb{F}_q$  be a finite field with $q$ elements and let 
$K[y_1^{\pm 1},\ldots, y_n^{\pm 1}]$ be a ring of Laurent polynomials
with coefficients in $K$. Consider a set $y^{v_1},\ldots,y^{v_s}$ of
Laurent monomials with $v_i\in\mathbb{Z}^n$ and
$v_i=(v_{i1},\ldots,v_{in})\in\mathbb{Z}^n$. The following set is called the 
{\it affine algebraic toric set\/} parameterized  by these monomials:   
$$
X^*:=\{(x_1^{v_{11}}\cdots x_n^{v_{1n}},\ldots,x_1^{v_{s1}}\cdots
x_n^{v_{sn}})\vert\, x_i\in K^*
\mbox{ for all }i\}.
$$
This model of parametrization was introduced in
\cite{afinetv}. In  \cite{katsabekis-thoma,afinetv} a classification
of the affine toric 
varieties that are parameterized by monomials is given. The set
$(K^*)^s$ is called an {\it affine algebraic torus} of
dimension $s$ and is denoted  by
$\mathbb{T}^*$. The affine torus $\mathbb{T}^*$ is a multiplicative group under
the product operation 
$$\alpha\cdot\alpha'=(\alpha_1,\ldots,\alpha_s)\cdot(\alpha'_1,\ldots,\alpha'_s)=
(\alpha_1\alpha'_1,\ldots,\alpha_s\alpha'_s).$$ 
Clearly, the set $X^*$ is also a group under componentwise multiplication. We have the
inclusions $X^*\subset 
\mathbb{T}^*\subset\mathbb{A}^s\setminus\{0\}$, where $\mathbb{A}^s$
denotes the 
affine space $K^s$.

The {\it projective space\/} of 
dimension $s-1$ over $K$, denoted by 
$\mathbb{P}^{s-1}$, is the quotient space 
$$(K^{s}\setminus\{0\})/\sim $$
where two points $\alpha$, $\beta$ in $K^{s}\setminus\{0\}$ 
are equivalent if $\alpha=\lambda{\beta}$ for some $\lambda\in K$. We
denote the  
equivalence class of $\alpha$ by $[\alpha]$. By
definition, there is a structure map
$$
\varphi_s\colon\mathbb{A}^{s}\setminus\{0\}\longrightarrow \mathbb{P}^{s-1},\, 
\ \ \ \ \ \ \alpha\longmapsto [\alpha].
$$

The image of $X^*$ under $\varphi_s$ will be denoted by $X$. The set 
$X$ is the {\it algebraic toric set\/} parameterized  by
$y^{v_1},\ldots,y^{v_s}$ that was defined earlier in the introduction:
$$
X:=\{[(x_1^{v_{11}}\cdots x_n^{v_{1n}},\ldots,x_1^{v_{s1}}\cdots
x_n^{v_{sn}})]\, \vert\, x_i\in K^*\mbox{ for all
}i\}\subset\mathbb{P}^{s-1}.
$$
The set $X$ is a multiplicative group with the product 
operation induced by that of $X^*$. The group structure of $X$ and $X^*$ 
will come into play in Section~\ref{degree-of-coordinate-ring}.

Let $S=K[t_1,\ldots,t_s]$ be a polynomial ring 
with coefficients in the field $K$ with the standard grading
$S=\oplus_{d=0}^\infty S_d$ induced by 
setting $\deg(t_i)=1$ for all
$i$. We are interested in the radical ideal $I(X)$ generated by the 
homogeneous polynomials of $S$ that vanish on $X$. 

Recall the following notion from commutative ring theory, which will
be used a few times in the exposition. Let $D$ be a commutative ring with unit and let
$M$ be a $D$-module. The set 
$$
\mathcal{Z}_D(M):=\{r\in D\, \vert\, rm=0\mbox{ for some }0\neq m\in
M\}
$$ 
is called the set of {\it zero divisors} of $M$. If $D$ is the ring of 
integers, we denote the set of zero divisors of $M$ simply by
$\mathcal{Z}(M)$.

We come to one of the main results of this section, an structure
theorem allowing---with the help of {\em
Macaulay\/}$2$ \cite{computations-macaula2,mac2}---the computation of the
Hilbert function and the degree of $S/I(X)$. 

\begin{theorem}\label{ipn-ufpe-cinvestav} 
Let $B=K[t_1,\ldots,t_s,y_1,\ldots,y_n,z]$ be 
a polynomial ring over the finite field $K=\mathbb{F}_q$. If
$v_i\in\mathbb{N}^n$ for all 
$i$, then the following holds{\rm :}
\begin{enumerate}
\item[\rm (a)]
$I(X)=(\{t_i-y^{v_i}z\}_{i=1}^s\cup\{y_i^{q-1}-1\}_{i=1}^n)\cap S$ and
$I(X)$ is a binomial ideal.
\item[\rm (b)] $t_i\notin\mathcal{Z}_S(S/I(X))$ for all $i$ and 
$I(X)$ is a radical lattice ideal.
\item[\rm (c)] $S/I(X)$ is a Cohen-Macaulay ring of dimension $1$.
\end{enumerate}
\end{theorem}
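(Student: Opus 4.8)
The plan is to establish the three parts in the order (a), then (b), then (c), since each builds on the previous. For part (a), the key idea is to realize $X$ as a projection, onto the $t$-coordinates, of the variety in $\mathbb{A}^{s+n+1}$ cut out by the relations $t_i = y^{v_i}z$ together with the field constraints $y_i^{q-1}=1$. I would let $B' = K[t_1,\dots,t_s,y_1,\dots,y_n,z]$ and set $P = (\{t_i - y^{v_i}z\}_{i=1}^s \cup \{y_i^{q-1}-1\}_{i=1}^n)$. First I would show $P \cap S \subseteq I(X)$: if $f(t_1,\dots,t_s) \in P\cap S$, then for any point $[(x^{v_1},\dots,x^{v_s})] \in X$ with $x_i \in K^*$, substituting $y_i = x_i$ (legal since $x_i^{q-1}=1$ in $K^*$) and $z = 1$ satisfies all generators of $P$, forcing $f$ to vanish there; homogenizing/scaling shows $f$ vanishes on the whole class, so $f \in I(X)$. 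The reverse inclusion $I(X) \subseteq P\cap S$ is the substantive direction and I would handle it via elimination: working modulo the binomials $y_i^{q-1}-1$, every $y^{v_i}z$ reduces so that $t_i \equiv y^{v_i}z$, and I would argue that $P\cap S$ is exactly the vanishing ideal by checking that the quotient $B'/P$, after eliminating $t_i$, is the coordinate ring of the torus points, whose image is $X$. Because all generators of $P$ are binomials, $P$ is a binomial ideal, and elimination of variables (intersecting with $S$) preserves the binomial property by Eisenbud--Sturmfels; this gives the final assertion of (a).

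\smallskip

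For part (b), the non-zerodivisor claim follows from the geometry: since $X \subset \mathbb{P}^{s-1}$ and every point of $X$ has all coordinates nonzero (each coordinate is a product of elements of $K^*$), the coordinate hyperplane $\{t_i=0\}$ misses $X$ entirely. Concretely, I would argue that $t_i$ vanishes at no point of $X$, so $t_i$ is not contained in any of the associated (minimal) primes of $I(X)$; since $I(X)$ is radical (it is a vanishing ideal, hence radical by the Nullstellensatz over the finite ground field, or directly because it is an intersection of the ideals of the finitely many points), its associated primes are exactly its minimal primes, and therefore $t_i \notin \mathcal{Z}_S(S/I(X))$. To conclude that $I(X)$ is a \emph{lattice} ideal, I would combine (a) with the general principle that a binomial ideal $I$ is a lattice ideal precisely when every variable is a non-zerodivisor modulo $I$; having just shown each $t_i$ is a non-zerodivisor, and knowing $I(X)$ is binomial and radical, the lattice-ideal conclusion follows. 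The lattice $\mathcal{L}$ is the one generated by the exponent differences $a-b$ of the binomial generators $t^a - t^b$ of $I(X)$.

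\smallskip

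Part (c) is then mostly bookkeeping on dimension and depth. The dimension-$1$ claim is immediate once I observe that $X$ is a finite set of points in $\mathbb{P}^{s-1}$: the projective coordinate ring $S/I(X)$ of a nonempty finite set of points has Krull dimension $1$, equivalently its Hilbert polynomial is the nonzero constant $|X|$, as already recalled in the introduction. For Cohen--Macaulayness, I would use that $S/I(X)$ has dimension $1$ and that, by part (b), some linear form (indeed $t_1$, or any $t_i$) is a non-zerodivisor on $S/I(X)$; a standard-graded ring of dimension $1$ possessing a homogeneous non-zerodivisor of degree $1$ has depth $\geq 1$, and since depth is bounded above by dimension, depth equals dimension, which is the definition of Cohen--Macaulay. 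Equivalently, the coordinate ring of any reduced finite set of projective points is Cohen--Macaulay.

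\smallskip

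\textbf{The hard part} is the reverse inclusion $I(X) \subseteq P\cap S$ in (a): showing that \emph{every} homogeneous polynomial vanishing on $X$ actually lies in the elimination ideal, rather than merely vanishing set-theoretically. The delicate point is that $P\cap S$ might a priori be strictly smaller than the full vanishing ideal if $B'/P$ carried embedded or non-reduced structure, so I expect the crux to be verifying that $P$ is radical and that its elimination correctly recovers the (radical) vanishing ideal. I anticipate this requires either a careful parametrization argument—exhibiting $X$ as exactly the image of the torus and matching degrees/cardinalities—or invoking that $P$ defines a reduced zero-dimensional scheme in the $(y,z)$-fiber direction because of the separable field equations $y_i^{q-1}-1$, whose roots are precisely the distinct elements of $K^*$.
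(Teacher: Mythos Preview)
Your outline is correct, and parts (b) and (c) are handled exactly as in the paper. For part (a) you correctly single out $I(X)\subseteq P\cap S$ as the substantive inclusion, but the paper proves it by an explicit computation rather than by an abstract appeal to radicality of $P$: given homogeneous $F\in I(X)$ of degree $d$, it rewrites $F$ modulo the $t_i-y^{v_i}z$ to obtain $F\equiv z^{d}F(y^{v_1},\dots,y^{v_s})$, then divides by the $y_i^{q-1}-1$ to get a remainder $G(y_1,\dots,y_n)$ with $\deg_{y_i}G<q-1$ for every $i$; since $F$ vanishes on $X$ one checks $G$ vanishes on $(K^*)^n$, and the degree bound forces $G=0$ (this is where the combinatorial Nullstellensatz, or a direct induction, enters). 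Your structural route via separability of $y_i^{q-1}-1$ works too, once you note that $B/P\cong\prod_{x\in(K^*)^n}K[z]$ and that the image of a homogeneous $F$ in each factor is $z^{d}F(x^{v_1},\dots,x^{v_s})=0$; this is really the same computation packaged abstractly, with the advantage that you never need to chase a remainder. One point you gloss over in the ``easy'' inclusion $P\cap S\subseteq I(X)$ is why an arbitrary $f\in P\cap S$ is homogeneous, which is needed before ``$f$ vanishes at $[P]$'' makes projective sense; the paper handles this by first extracting binomial generators of $P\cap S$ via a Gr\"obner basis and then checking each binomial $t^a-t^b$ is homogeneous by substituting $y_i=1$, $t_i=z$ to force $\sum a_i=\sum b_i$. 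An alternative fix, in the spirit of your argument, is to grade $B$ by $\deg t_i=\deg z=1$, $\deg y_i=0$, so that $P$ is homogeneous and hence so is $P\cap S$.
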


\begin{proof} (a) We set $I'=(t_1-y^{v_1}z,\ldots,t_s-y^{v_s}z,
y_1^{q-1}-1,\ldots,y_n^{q-1}-1)\subset B$. First we show the inclusion
$I(X)\subset I'\cap S$. Take a homogeneous polynomial
$F=F(t_1,\ldots,t_s)$ of degree $d$ that vanishes on $X$. We can write
\begin{equation}\label{aug27-09}
F=\lambda_1 t^{m_1}+\cdots+\lambda_r t^{m_r}\ \ \ \
(\lambda_i\in K^*;\, m_i\in \mathbb{N}^s), 
\end{equation}
where $\deg(t^{m_i})=d$ for all $i$. Write
$m_i=(m_{i1},\ldots,m_{is})$ for $1\leq i\leq r$. Applying the binomial
theorem to expand the right hand side of the equality
$$
t_j^{m_{ij}}=\left[(t_j-y^{v_j}z)+y^{v_j}z\right]^{m_{ij}},\ \ \ 
1\leq i\leq r,\ 1\leq j\leq s,
$$
and then substituting all the $t_j^{m_{i j}}$ in Eq.~(\ref{aug27-09}), we obtain
that $F$ can 
be written as:
\begin{equation}\label{23-jul-10}
F=\sum_{i=1}^sg_i(t_i-y^{v_i}z)+z^dF(y^{v_1},\ldots,y^{v_s})
\end{equation}
for some $g_1,\ldots,g_s$ in $B$. By the division algorithm
in $K[y_1,\ldots,y_n]$ (see \cite[Theorem~3, p.~63]{CLO}) we can
write 
\begin{equation}\label{23-jul-10-1}
F(y^{v_1},\ldots,y^{v_s})=\sum_{i=1}^nh_i(y_i^{q-1}-1)+G(y_1,\ldots,y_n)
\end{equation}
for some $h_1,\ldots,h_n$ in $K[y_1,\ldots,y_n]$, 
where the monomials that occur in $G=G(y_1,\ldots,y_n)$ are not divisible by 
any of the monomials $y_1^{q-1},\ldots,y_n^{q-1}$, i.e.,
$\deg_{y_i}(G)<q-1$ for $i=1,\ldots,n$.
Therefore, using Eqs.~(\ref{23-jul-10}) and (\ref{23-jul-10-1}), we
obtain the equality
\begin{equation}\label{23-jul-10-2}
F=\sum_{i=1}^sg_i(t_i-y^{v_i}z)+\left(\sum_{i=1}^nh_i(y_i^{q-1}-1)\right)z^d+
G(y_1,\ldots,y_n)z^d.
\end{equation}
Thus to show that $F\in I'\cap S$ we need only show that $G=0$. We
claim that  
$G$ vanishes on $(K^*)^n$. Take an arbitrary sequence $x_1,\ldots,x_n$
of elements of $K^*$. Making $t_i=x^{v_i}$ for all $i$ in
Eq.~(\ref{23-jul-10-2}) and using that
$F$ vanishes on $X$, we obtain
\begin{equation}\label{23-jul-10-3}
0=F(x^{v_1},\ldots,x^{v_s})=\sum_{i=1}^sg_i'(x^{v_i}-y^{v_i}z)+
\left(\sum_{i=1}^nh_i(y_i^{q-1}-1)\right)z^d+
G(y_1,\ldots,y_n)z^d.
\end{equation}
Since $(K^*,\,\cdot\, )$ is a group of order $q-1$, 
we can then make $y_i=x_i$ for all $i$ and $z=1$ in
Eq.~(\ref{23-jul-10-3}) to get that $G$
vanishes on $(x_1,\ldots,x_n)$. This completes the proof of the 
claim. Therefore $G$ vanishes on $(K^*)^n$ and $\deg_{y_i}(G)<q-1$ 
for all $i$. By induction on $n$ it follows that $G=0$. We can also
show that $G=0$ by a direct application of the combinatorial Nullstellensatz
\cite{alon-cn}.  

Next we show the inclusion $I(X)\supset I'\cap S$. Let $\mathcal{G}$
be a Gr\"obner basis of  
$I'$ with respect to the lexicographic order $y_1\succ\cdots\succ y_n\succ
z\succ t_1\succ\cdots\succ t_s$. By Buchberger algorithm
\cite[Theorem~2, p.~89]{CLO} the set $\mathcal{G}$ consists of binomials and 
by elimination theory \cite[Theorem~2, p.~114]{CLO} the set
$\mathcal{G}\cap S$ is a Gr\"obner 
basis of $I'\cap S$. 
Hence $I'\cap S$ is a binomial ideal. Thus to show the inclusion
$I(X)\supset I'\cap S$ it suffices to show that any binomial in
$I'\cap S$ is homogeneous and 
vanishes on $X$. Take a binomial $f=t^a-t^b$ in $I'\cap S$, where
$a=(a_i)$ and $b=(b_i)$ 
are in $\mathbb{N}^s$. Then we can
write
\begin{equation}\label{sept1-09}
f=\sum_{i=1}^sg_i(t_i-y^{v_i}z)+\sum_{i=1}^nh_i(y_i^{q-1}-1)
\end{equation}
for some polynomials $g_1,\ldots,g_s, h_1,\ldots,h_n$ in $B$. Making $y_i=1$ for
$i=1,\ldots,n$ and $t_i=y^{v_i}z$ for 
$i=1,\ldots,s$, we get
$$
z^{a_1}\cdots z^{a_s}-z^{b_1}\cdots z^{b_s}=0\ \Longrightarrow\ 
a_1+\cdots+a_s=b_1+\cdots+b_s.
$$
Hence $f$ is homogeneous. Take a point 
$[P]$ in $X$ with $P=(x^{v_1},\ldots,x^{v_s})$. Making $t_i=x^{v_i}$ in
Eq.~(\ref{sept1-09}), we get 
$$
f(x^{v_1},\ldots,x^{v_s})=\sum_{i=1}^sg_i'(x^{v_i}-y^{v_i}z)+
\sum_{i=1}^nh_i'(y_i^{q-1}-1).
$$
Hence making $y_i=x_i$ for all $i$ and $z=1$, we get that 
$f(P)=0$. Thus $f$ vanishes on $X$.

Thus, we have shown the equality $I(X)=I'\cap S$. The
proof of the inclusion $I(X)\supset I'\cap S$ shows that $I'\cap S$
is a binomial ideal. Hence $I(X)$ is a binomial ideal.

(b) Observe that a binomial ideal $J\subset S$ is a lattice ideal if and only
if  $t_i\notin\mathcal{Z}_S(S/J)$ for all $i$. This is a consequence of
\cite[Corollary~2.5]{EisStu}. Thus by part (a) we need only show 
that $t_i$ is not a zero divisor of $S/I(X)$ for all $i$. Let
$[P]$ be a point in $X$, with $P=(\alpha_1,\ldots,\alpha_s)$, and let
$I_{[P]}$ be the 
ideal generated by the homogeneous polynomials of $S$ that vanish 
at $[P]$. Then 
\begin{equation}\label{primdec-ix}
I_{[P]}=(\alpha_1t_2-\alpha_2t_1,\alpha_1t_3-\alpha_3t_1,\ldots,
\alpha_1t_s-\alpha_st_1)\ \mbox{ and }\ I(X)=\bigcap_{[P]\in X}I_{[P]}
\end{equation}
and the later is the primary decomposition of $I(X)$, because $I_{[P]}$ is
a prime ideal of $S$ for any $[P]\in X$. Thus ${\rm rad}\, I(X)=I(X)$, i.e., 
$I(X)$ is a radical ideal. Since
$$\mathcal{Z}_S(S/I(X))=\bigcup_{[P]\in X} I_{[P]}$$ 
it is seen that $t_i$ is not a
zero divisor for any $i$. 

(c) As $I_{[P]}$ has height $s-1$ for any $[P]\in X$, we get that $\dim S/I(X)=1$. By
(b) any variable $t_i$ is a $S$-regular element of $S/I(X)$. Thus any
variable $t_i$ form a homogeneous regular system of parameters of $S/I(X)$,
i.e., $S/I(X)$ is a Cohen-Macaulay ring by \cite[Proposition~2.2.7]{monalg}.
 \end{proof}

By Theorem~\ref{ipn-ufpe-cinvestav}(a), the ideal $I(X)$ is generated by binomials. This fact is
surprising, because according to Eq.~(\ref{primdec-ix}) $I(X)$ is a radical ideal
and all its minimal primes, except 
$\mathfrak{p}=(\{t_i-t_1\}_{i=2}^s)$, are non-binomial. 

The next notion that we need is that of the saturation of an ideal with respect to
a polynomial. We will determine when $I(X)$ can be obtained by a
saturation process (see Corollary~\ref{sunday-morning-06-09-09}). 

\begin{definition}
For an ideal $Q\subset S$ and  a polynomial $h\in S$, the {\it
saturation\/} of $Q$ with respect to $h$
is the ideal
$$(Q\colon
h^{\infty}):=\{f\in S\vert\, fh^m\in Q\mbox{ for some }m\geq 1\}.
$$
We will only deal with the case where $h=t_1\cdots t_s$. 
\end{definition}

Let $\mathcal{A}=\{v_1,\ldots,v_s\}\subset\mathbb{Z}^n$ and let 
$I_\mathcal{A}$ be its associated  
{\it toric ideal\/}, i.e., $I_\mathcal{A}$ is the prime ideal of
$S$ given by (see \cite{Stur1}):
\begin{equation}\label{oct12-09}
I_\mathcal{A}=\left.\left(
t^a-t^b\right\vert\, a=(a_i),b=(b_i)\in\mathbb{N}^s,\textstyle\sum_ia_iv_i=\sum_i
b_iv_i\right)\subset S.
\end{equation}
The toric ideal $I_\mathcal{A}$ is the kernel of
the following epimorphism of $K$-algebras
$$
K[t_1,\ldots,t_s]\longrightarrow K[y^{v_1},\ldots,y^{v_s}]
$$
induced by $t_i\mapsto y^{v_i}$. We call
$\mathcal{A}$ {\it homogeneous\/} 
if there is a vector $x_0\in\mathbb{Q}^n$ such that $\langle
v_i,x_0\rangle=1$ for 
all $i$. From Eq.~(\ref{oct12-09}) it follows
that any binomial in $I_\mathcal{A}$ vanishes on $X$. If $\mathcal{A}$
is homogeneous, then any binomial in $I_\mathcal{A}$ is homogeneous, in
the standard grading of $S$, hence belongs to $I(X)$. The 
binomial $t_i^{q-1}-t_1^{q-1}$ vanishes on $(K^*)^s$ 
because $(K^*,\,\cdot\, )$ is a group of order $q-1$. Hence $t_i^{q-1}-t_1^{q-1}$ 
belongs to $I(X)$ for all $i$. Thus if $\mathcal{A}$ is homogeneous, 
then $I(X)$ contains the binomial ideal
$Q=I_\mathcal{A}+(\{t_i^{q-1}-t_1^{q-1}\}_{i=2}^s)$. For a large class of 
algebraic toric sets, we show that $I(X)$ is the saturation of $Q$ with
respect to $t_1\cdots t_s$. We also describe when $I(X)$ is the saturation of $Q$ with
respect to $t_1\cdots t_s$.

Let us introduce some more notation. Given $\Gamma\subset
\mathbb{Z}^n$, the subgroup of $\mathbb{Z}^n$ 
generated by $\Gamma$ is denoted by $\mathbb{Z}\Gamma$. 

\begin{lemma}\label{sept4-09-1}
If $c=(c_i)\in\mathbb{Z}^s$ and $\sum_ic_i=0$, then 
$c$ is in $\mathbb{Z}\{e_2-e_1,\ldots,e_s-e_1\}$, 
where $e_i$ is the $i${\it th} unit vector of $\mathbb{Z}^s$.
\end{lemma}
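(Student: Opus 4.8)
The plan is to show that the vector $c=(c_1,\ldots,c_s)$, constrained only by the condition $\sum_i c_i = 0$, can be written as an integer linear combination of the vectors $e_2-e_1,\ldots,e_s-e_1$. The natural approach is constructive: exhibit the explicit integer coefficients directly. Since the hypothesis $\sum_i c_i = 0$ gives us exactly one linear relation among the $s$ coordinates, the subspace it cuts out has dimension $s-1$, matching the number of generators $e_2-e_1,\ldots,e_s-e_1$, so we expect a unique such combination and should be able to read off its coefficients.

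First I would simply compute $\sum_{i=2}^s c_i(e_i-e_1)$ and check that it equals $c$. Expanding, the $e_i$-component for $i\geq 2$ is $c_i$, which is correct, while the $e_1$-component is $-\sum_{i=2}^s c_i$. The key step is then to invoke the hypothesis: since $\sum_{i=1}^s c_i = 0$, we have $-\sum_{i=2}^s c_i = c_1$, so the $e_1$-component is also correct. Hence
$$
c = \sum_{i=2}^s c_i(e_i - e_1),
$$
and because each $c_i$ is an integer, this exhibits $c$ as an element of $\mathbb{Z}\{e_2-e_1,\ldots,e_s-e_1\}$, as desired. There is essentially no obstacle here; the entire content of the lemma is the single substitution using $\sum_i c_i = 0$ to fix the first coordinate.

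If one prefers a slightly more structural phrasing, I would note that the map $\mathbb{Z}^{s-1}\to\mathbb{Z}^s$ sending the standard basis to $e_2-e_1,\ldots,e_s-e_1$ has image contained in the hyperplane $\{x : \sum_i x_i = 0\}$, and the computation above shows the image contains every integer vector in that hyperplane; the explicit coefficient vector $(c_2,\ldots,c_s)$ is the preimage. Either way the proof is a one-line verification, and the only thing to be careful about is bookkeeping the sign on the $e_1$-coordinate.
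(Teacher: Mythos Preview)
Your proof is correct and essentially the same as the paper's. The paper first observes that $\mathbb{Z}\{e_2-e_1,\ldots,e_s-e_1\}+\mathbb{Z}e_1=\mathbb{Z}^s$, writes $c=\lambda_1 e_1+\sum_{i=2}^s\lambda_i(e_i-e_1)$, and then uses $\sum_i c_i=0$ to conclude $\lambda_1=0$; you go slightly more directly by naming the coefficients $\lambda_i=c_i$ from the start and verifying the identity, which is the same computation.
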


\begin{proof} Notice that
$\mathbb{Z}\{e_2-e_1,\ldots,e_s-e_1\}+\mathbb{Z}e_1=\mathbb{Z}^s$.
Then $c=\lambda_{1}e_1+\sum_{i=2}^s\lambda_i(e_i-e_1)$ for
some $\lambda_i\in\mathbb{Z}$. 
As $\sum_ic_i=0$, we get $\lambda_1=0$. 
\end{proof}

\begin{definition} Given $a\in {\mathbb R}^s$, its {\it
support\/} is defined as ${\rm supp}(a)=\{i\, |\, a_i\neq 0\}$. 
Note that $a$ can be uniquely written as $a=a^+-a^-$, 
where $a^+$ and $a^-$ are two non-negative vectors 
with disjoint support which are called the {\it positive\/} and 
{\it negative\/} part of $a$ respectively. 
\end{definition}

We come to another of the main results of this section. 

\begin{theorem}\label{vila-dictaminadora} Let $K=\mathbb{F}_q$ be a finite field, let
$\mathcal{A}=\{v_1,\ldots,v_s\}\subset\mathbb{Z}^n$, and let
$\phi\colon
\mathbb{Z}^n/L\rightarrow\mathbb{Z}^n/L$
be the multiplication map $\phi(\overline{a})=(q-1)\overline{a}$,
where $L=\mathbb{Z}\{v_i-v_1\}_{i=2}^s$. If
$\mathcal{A}$ is homogeneous, then
\begin{equation}\label{happy-sept10-09}
(I_\mathcal{A}+(t_2^{q-1}-t_1^{q-1},\ldots,t_s^{q-1}-t_1^{q-1})\colon
(t_1\cdots 
t_s)^\infty)\subset I(X)
\end{equation}
with equality if and only if the map $\phi$ is injective. 
\end{theorem}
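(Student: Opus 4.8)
The plan is to set $Q=I_{\mathcal A}+(\{t_i^{q-1}-t_1^{q-1}\}_{i=2}^s)$ and prove the containment and the equivalence separately. For the containment \eqref{happy-sept10-09} I would first note that, since $\mathcal A$ is homogeneous, every binomial of $Q$ is homogeneous and vanishes on $X$ (this is exactly the discussion preceding the theorem), so $Q\subset I(X)$. Next I would observe that $t_i\notin\mathcal Z_S(S/I(X))$ for all $i$ by Theorem~\ref{ipn-ufpe-cinvestav}(b), which says $I(X)$ is saturated with respect to $t_1\cdots t_s$; hence $(Q\colon (t_1\cdots t_s)^\infty)\subset (I(X)\colon (t_1\cdots t_s)^\infty)=I(X)$. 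This gives the inclusion for free and isolates all the real content into the reverse inclusion and its precise characterization by injectivity of $\phi$.

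For the equivalence the strategy is to compare the two saturated lattice ideals combinatorially. Both $(Q\colon(t_1\cdots t_s)^\infty)$ and $I(X)$ are lattice ideals (a saturation of a binomial ideal with respect to a monomial is a lattice ideal, and $I(X)$ is a lattice ideal by Theorem~\ref{ipn-ufpe-cinvestav}(b)); so the plan is to identify the two lattices in $\mathbb Z^s$ and show they coincide exactly when $\phi$ is injective. I would compute the lattice $\mathcal L'$ attached to $(Q\colon(t_1\cdots t_s)^\infty)$: a vector $c=(c_i)\in\mathbb Z^s$ lies in $\mathcal L'$ iff $t^{c^+}-t^{c^-}$ saturates into $Q$. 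Using the homogeneity of $\mathcal A$ one checks $\sum_i c_i=0$ for every such $c$, and then the defining generators show $\mathcal L'=L'$ where $L'=\{c\in\mathbb Z^s:\sum_i c_i=0,\ \sum_i c_i v_i\in (q-1)L\}$ — here $I_{\mathcal A}$ forces $\sum_i c_iv_i=0$ modulo the lattice generated by the $t_i^{q-1}-t_1^{q-1}$, whose exponent vectors are $(q-1)(e_i-e_1)$ and map under $c\mapsto\sum_i c_iv_i$ onto $(q-1)L$. On the other side, from the primary decomposition \eqref{primdec-ix} the lattice of $I(X)$ is $\mathcal L=\{c\in\mathbb Z^s:\sum_i c_i=0,\ \sum_i c_iv_i\in (q-1)\,\mathbb Z\{v_i-v_1\}\ \text{once reduced},\ x^{\langle c,\,\cdot\rangle}\equiv 1\}$; concretely $c\in\mathcal L$ iff the binomial $t^{c^+}-t^{c^-}$ vanishes on all of $X$, and since $X$ is parameterized by $x\mapsto (x^{v_1},\dots,x^{v_s})$ with $x\in(K^*)^n$, this says $\sum_i c_i=0$ together with $\prod_i (x^{v_i})^{c_i}=1$ for all $x\in(K^*)^n$, i.e. $x^{\sum_i c_i v_i}=1$ for all $x\in(K^*)^n$, which by $|K^*|=q-1$ is equivalent to $\sum_i c_i v_i\in (q-1)\mathbb Z^n$.

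Assembling these, $\mathcal L=\{c:\sum_i c_i=0,\ \sum_i c_iv_i\in(q-1)\mathbb Z^n\}$ while $\mathcal L'=\{c:\sum_i c_i=0,\ \sum_i c_iv_i\in (q-1)L\}$ where $L=\mathbb Z\{v_i-v_1\}_{i=2}^s$; by Lemma~\ref{sept4-09-1} the condition $\sum_i c_i=0$ forces $\sum_i c_iv_i$ to lie in $L$ anyway, so the only gap between the two lattices is the difference between $(q-1)L$ and $(q-1)\mathbb Z^n\cap L$. Thus $\mathcal L'=\mathcal L$ iff $(q-1)\mathbb Z^n\cap L=(q-1)L$, i.e.\ iff multiplication by $q-1$ on $\mathbb Z^n/L$ restricted to the relevant part has no torsion in the way that creates a strict inclusion $(q-1)L\subsetneq (q-1)\mathbb Z^n\cap L$. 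Reformulating, $(q-1)\mathbb Z^n\cap L=(q-1)L$ precisely when $\overline a\mapsto(q-1)\overline a$ is injective on $\mathbb Z^n/L$: an element $\overline a\in\mathbb Z^n/L$ with $(q-1)\overline a=\overline 0$ corresponds exactly to a vector $(q-1)a\in (q-1)\mathbb Z^n\cap L$ that is not in $(q-1)L$ unless $a\in L$. Therefore the containment \eqref{happy-sept10-09} is an equality iff $\ker\phi=0$, i.e.\ iff $\phi$ is injective. The main obstacle I expect is the careful bookkeeping translating ``saturate into $Q$'' into the membership $\sum_i c_iv_i\in(q-1)L$ — one must verify that saturation by $t_1\cdots t_s$ exactly removes the positivity constraints on $c$ and leaves the two linear/congruence conditions, and that no extra lattice vectors sneak in; getting this identification of $\mathcal L'$ exactly right, rather than up to a sublattice, is where the injectivity hypothesis is genuinely used.
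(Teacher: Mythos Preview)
Your proposal is correct and rests on the same underlying idea as the paper's proof: both identify $(Q\colon(t_1\cdots t_s)^\infty)$ with the lattice ideal $I(\mathcal L)$ for $\mathcal L=\ker_{\mathbb Z}(A)+(q-1)\mathbb Z\{e_i-e_1\}_{i=2}^s$ (which coincides with your $\mathcal L'$), and both reduce the question to whether $L\cap(q-1)\mathbb Z^n=(q-1)L$. The difference is organizational. You identify the lattice of $I(X)$ explicitly as $\{c:\sum_i c_i=0,\ \sum_i c_iv_i\in(q-1)\mathbb Z^n\}$ and compare the two lattices in one stroke; the paper never writes down the lattice of $I(X)$, instead handling the two implications by separate binomial computations (the $\Leftarrow$ direction derives $\sum_i c_iv_i\in(q-1)\mathbb Z^n$ via a substitution argument with a generator $\beta$ of $K^*$ and then uses injectivity of $\phi$ to land in $\mathcal L$, while the $\Rightarrow$ direction starts from a nonzero $\overline b\in\ker\phi$, builds a binomial in $I(X)$, and shows it forces $\overline b=0$). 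Your packaging is more conceptual and avoids the element-by-element work; the paper's is more explicit. Two small points to firm up in your write-up: the claim that the saturation of a binomial ideal by $t_1\cdots t_s$ is a lattice ideal requires the ideal to be generated by pure differences $t^a-t^b$ (which holds for $Q$), and the step ``$x^{\sum_i c_iv_i}=1$ for all $x\in(K^*)^n$ implies $\sum_i c_iv_i\in(q-1)\mathbb Z^n$'' uses that $K^*$ is cyclic (choose $x_j=1$ for $j\neq i$ and $x_i$ a generator), not merely that $|K^*|=q-1$.
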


\begin{proof} We set
$Q=I_\mathcal{A}+(t_2^{q-1}-t_1^{q-1},\ldots,t_s^{q-1}-t_1^{q-1})$.
From the discussion above we have the inclusion $Q\subset I(X)$. By
Theorem~\ref{ipn-ufpe-cinvestav}(b) each variable $t_i$ is not a zero
divisor of $S/I(X)$. It follows readily that 
$(Q\colon(t_1\cdots t_s)^\infty)\subset I(X)$. 

To prove the second part of the theorem we first need to identify the left hand side 
of Eq.~(\ref{happy-sept10-09}) with a lattice ideal for some specific
lattice. Let $A$
be the matrix 
with column vectors $v_1,\ldots,v_s$ and 
consider the lattice 
$$
\mathcal{L}={\rm
ker}_\mathbb{Z}(A)+\mathbb{Z}\{(q-1)(e_i-e_1)\}_{i=2}^s
\subset\mathbb{Z}^s,
$$
where ${\rm ker}_\mathbb{Z}(A)=\{x\in\mathbb{Z}^s\vert\, Ax=0\}$ and 
 $e_i$ denotes the $i${\it th} unit vector of $\mathbb{R}^s$. 
It is seen that 
\begin{equation}\label{saturation-is-lattice}
I(\mathcal{L})=(Q\colon(t_1\cdots t_s)^\infty),
\end{equation} 
see \cite[Corollary~2.5]{EisStu} or \cite[Lemma 7.6]{cca}. This
equality is valid over any field $K$.

$\Rightarrow$) Assume that equality holds in
Eq.~(\ref{happy-sept10-09}). Let $\overline{b}=\overline{(b_i)}$ be 
an element of ${\rm ker}(\phi)$. Then we can write
\begin{equation}\label{sept10-09}
(q-1)b=\sum_{i=1}^sa_iv_i\ \mbox{ with }\  \sum_{i=1}^sa_i=0.
\end{equation}
Consider the homogeneous binomial $f=t^{a^+}-t^{a^-}$, where
$a=(a_i)=a^+-a^-$. From Eq.~(\ref{sept10-09}) we get the equality  
$$ 
x_i^{a_1^+v_{1i}+\cdots+a_s^+v_{si}}=x_i^{a_1^-v_{1i}+\cdots+a_s^-v_{si}}
\ \mbox{ for any }\ x_i\in K^*.
$$
Consequently $f(x^{v_1},\ldots,x^{v_s})=0$ for any sequence
$x_1,\ldots,x_n$ in $K^*$. Then $f$ vanishes on $X$ and is
homogeneous, i.e., $f\in I(X)$. By hypothesis and using
Eq.~(\ref{saturation-is-lattice}), we obtain the equality  $I(X)=I(\mathcal{L})$. Thus
$f=t^{a^+}-t^{a^-}$ belongs to $I(\mathcal{L})$. It is seen that 
$a=a^+-a^-$ belongs to $\mathcal{L}$. Then we can write $a=k+c$, where 
$k\in{\rm ker}_\mathbb{Z}(A)$ and
$c\in\mathbb{Z}\{(q-1)(e_i-e_1)\}_{i=2}^s$. Then from
Eq.~(\ref{sept10-09}) it follows readily that 
$$(q-1)b=Aa=Ak+Ac=Ac=(q-1)Ac',$$ 
for some
$c'\in\mathbb{Z}\{(e_i-e_1)\}_{i=2}^s$. Hence $b=Ac'$, i.e., $b$
belongs to $L$. This means that $\overline{b}=0$ and we have shown
that $\phi$ is injective, as required.  

$\Leftarrow$) Assume that $\phi$ is injective. We now prove the inclusion
$(Q\colon(t_1\cdots t_s)^\infty)\supset I(X)$. Take a binomial 
$f=t^a-t^b$ in $I(X)$ with $a=(a_i)$ and $b=(b_i)$ in $\mathbb{N}^s$.
By Theorem~\ref{ipn-ufpe-cinvestav}(a) it
suffices to prove that $f$ is in $(Q\colon(t_1\cdots t_s)^\infty)$. 
Thus by Eq.~(\ref{saturation-is-lattice}) we need only show that
$a-b\in\mathcal{L}$. We set $v_i=(v_{i1},\ldots,v_{in})$ for
$i=1,\ldots,s$. Since $f$ vanishes on 
$X$ we get 
$$
[x_1^{v_{11}}\cdots x_n^{v_{1n}}]^{a_1}\cdots [x_1^{v_{s1}}\cdots
x_n^{v_{sn}}]^{a_s}=[x_1^{v_{11}}\cdots x_n^{v_{1n}}]^{b_1}\cdots
[x_1^{v_{s1}}\cdots
x_n^{v_{sn}}]^{b_s}\ \mbox{ for all } \ x_i\in K^*.
$$
Let $\beta$ be a generator of the cyclic group $(K^*,\,\cdot\, )$. 
Then for any $(\ell_1,\ldots,\ell_n)$ in
$[1,q-1]^n\cap\mathbb{N}^n$ we can substitute  $x_i=\beta^{\ell_i}$ 
for $i=1,\ldots,n$ in the equality above to obtain
\begin{eqnarray*}
& &[(\beta^{\ell_1})^{v_{11}}\cdots
(\beta^{\ell_n})^{v_{1n}}]^{a_1}\cdots [(\beta^{\ell_1})^{v_{s1}}\cdots 
(\beta^{\ell_n})^{v_{sn}}]^{a_s}=\\
& &\ \ \ \ \ \ \ [(\beta^{\ell_1})^{v_{11}}\cdots
(\beta^{\ell_n})^{v_{1n}}]^{b_1}\cdots 
[(\beta^{\ell_1})^{v_{s1}}\cdots
(\beta^{\ell_n})^{v_{sn}}]^{b_s}\ \mbox{ for all } 
\ 1\leq \ell_i\leq q-1,\, \ell_i\in\mathbb{N}.
\end{eqnarray*}
Therefore for any $\ell=(\ell_1,\ldots,\ell_n)\in
[1,q-1]^n\cap\mathbb{N}^n$ we get
$$
\beta^{a_1\langle\ell,v_1\rangle}\cdots
\beta^{a_s\langle\ell,v_s\rangle}=\beta^{b_1\langle\ell,v_1\rangle}\cdots
\beta^{b_s\langle\ell,v_s\rangle}.
$$
Since $\beta$ has order $q-1$ we obtain
$$
a_1\langle\ell,v_1\rangle+\cdots+a_s\langle\ell,v_s\rangle\equiv 
b_1\langle\ell,v_1\rangle+\cdots+b_s\langle\ell,v_s\rangle\mod(q-1).
$$
If we set $c_i=a_i-b_i$ for all $i$ and 
$\delta=(\delta_i):=c_1v_1+\cdots+c_sv_s$, then 
\begin{equation}\label{sept4-09}
\langle\ell,\delta\rangle
\equiv 0\mod(q-1)
\end{equation}
for any $\ell$ in $[1,q-1]^n\cap\mathbb{N}^n$. Making 
$\ell=(q-1,1\ldots,1)$ and $\ell'=(q-2,1,\ldots,1)$ in
Eq.~(\ref{sept4-09}) we get the equalities
\begin{eqnarray*}
\langle\ell,\delta
\rangle&=&(q-1)\delta_1+\delta_2+\cdots+\delta_n\equiv 0\mod (q-1),\\
\langle\ell',\delta
\rangle&=&(q-2)\delta_1+\delta_2+\cdots+\delta_n\equiv 0\mod (q-1).
\end{eqnarray*}
Consequently, subtracting these equalities, we get that $\delta_1\equiv
0\mod(q-1)$. By an appropriate choice of $\ell$ and $\ell'$ a similar argument
shows that $\delta_i\equiv 0\mod(q-1)$ for all $i$. Therefore we can
write $\delta=(q-1)\gamma$ for some $\gamma\in\mathbb{Z}^n$. Notice
that $\delta\in L$ because $t^a-t^b$ is homogeneous, i.e., because 
$\sum_{i}c_i=0$. Since the
map $\phi$ is injective we obtain that
$\gamma\in L\subset\mathbb{Z}\mathcal{A}$. Hence we can write
$$
\delta=c_1v_1+\cdots+c_sv_s=(q-1)(d_1v_1+\cdots+d_sv_s)
$$ 
for some $d_i$'s in $\mathbb{Z}$. Setting $c=(c_i)$ and $d=(d_i)$, the 
vector $k=(k_i)=c-(q-1)d$ is in ${\rm ker}_\mathbb{Z}(A)$. Notice 
that $\sum_ik_i=0$, because $\sum_ik_iv_i=0$ and $\mathcal{A}$ is
homogeneous. Since 
$\sum_ic_i=0$, by Lemma~\ref{sept4-09-1} we get that $c$ and $k$ are in
$\mathbb{Z}\{e_i-e_1\}_{i=2}^s$. From the equality 
$k=c-(q-1)d$ we obtain that
$(q-1)d\in\mathbb{Z}\{e_i-e_1\}_{i=2}^s$ and since the quotient
group
$$
\mathbb{Z}^s/\mathbb{Z}\{e_i-e_1\}_{i=2}^s
$$
is torsion-free we get that
$d\in\mathbb{Z}\{e_i-e_1\}_{i=2}^s$. Altogether we conclude
that $c=k+(q-1)d$, where $k\in{\rm ker}_\mathbb{Z}(A)$ and $
(q-1)d\in \mathbb{Z}\{(q-1)(e_i-e_1)\}_{i=2}^s$, that is,
$c\in\mathcal{L}$, as required. 
\end{proof}

\begin{remark} If equality occurs in 
Eq.~(\ref{happy-sept10-09}), then $X$ is the projective variety
defined by the binomial ideal
$I_\mathcal{A}+(\{t_i^{q-1}-t_1^{q-1}\}_{i=2}^s)$. This  
will follow from Lemma~\ref{sept25-09} and the proof of
Proposition~\ref{sept29-09-5}.
\end{remark}

\begin{remark}\label{oct15-09} The map $\phi$ is injective if and
only if $q-1$ is not 
a zero divisor of $\mathbb{Z}^n/L$ if and only if the equality 
$(L\colon_{\mathbb{Z}^n}(q-1))=L$ holds, where the left hand side of
the equality is a
colon ideal consisting of all $a\in\mathbb{Z}^n$ such that $(q-1)a\in
L$.
\end{remark}

\begin{corollary}{\bf \cite[Theorem~1]{GRH}}\label{sept15-09}
Let $\mathbb{T}^*=(K^*)^s$ be an affine algebraic torus and let
$\mathbb{T}$
be its image in $\mathbb{P}^{s-1}$ under the map $\varphi_s$. Then
$I(\mathbb{T})=(\{t_i^{q-1}-t_1^{q-1}\}_{i=2}^s)$.  
\end{corollary}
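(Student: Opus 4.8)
The plan is to realize $\mathbb{T}$ as a particular instance of the algebraic toric sets $X$ treated in Theorem~\ref{vila-dictaminadora} and then to read the equality off from that theorem. Concretely, I would take $n=s$ and $v_i=e_i$ for $i=1,\ldots,s$, so that $y^{v_i}=y_i$ and the parametrization collapses to $(x_1,\ldots,x_s)$ with $x_i\in K^*$; thus $X^*=\mathbb{T}^*$ and $X=\mathbb{T}$. For this choice $\mathcal{A}=\{e_1,\ldots,e_s\}$ is homogeneous, since $\langle e_i,x_0\rangle=1$ for $x_0=(1,\ldots,1)$, and the toric ideal $I_\mathcal{A}$ is the kernel of the isomorphism $K[t_1,\ldots,t_s]\to K[y_1,\ldots,y_s]$, $t_i\mapsto y_i$, so $I_\mathcal{A}=(0)$. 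Writing $J:=(\{t_i^{q-1}-t_1^{q-1}\}_{i=2}^s)$, the ideal $Q$ appearing in Theorem~\ref{vila-dictaminadora} is exactly $J$.

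Next I would verify that the multiplication map $\phi$ is injective, so that Theorem~\ref{vila-dictaminadora} delivers an honest equality rather than just an inclusion. Here $L=\mathbb{Z}\{e_i-e_1\}_{i=2}^s$, and by Lemma~\ref{sept4-09-1} this is precisely the set of integer vectors whose coordinates sum to zero; hence the coordinate-sum map induces an isomorphism $\mathbb{Z}^s/L\cong\mathbb{Z}$, which is torsion-free. In particular $q-1$ is not a zero divisor of $\mathbb{Z}^s/L$, so by Remark~\ref{oct15-09} the map $\phi$ is injective, and Theorem~\ref{vila-dictaminadora} gives $I(\mathbb{T})=(J\colon(t_1\cdots t_s)^\infty)$.

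The remaining---and genuinely delicate---point is that this saturation does not enlarge $J$, that is, $(J\colon(t_1\cdots t_s)^\infty)=J$; note that one cannot simply invoke saturatedness of a lattice, since the lattice $\mathcal{L}=\mathbb{Z}\{(q-1)(e_i-e_1)\}_{i=2}^s$ has torsion $(\mathbb{Z}/(q-1))^{s-1}$ in $\mathbb{Z}^s/\mathcal{L}$. I would prove triviality of the saturation by showing that every variable is a non-zero-divisor on $S/J$, which is equivalent to $(J\colon(t_1\cdots t_s)^\infty)=J$. To see that $t_1$ is a non-zero-divisor I would check that the generators $t_i^{q-1}-t_1^{q-1}$ form a Gr\"obner basis of $J$ for a term order with $t_i\succ t_1$ ($i\geq 2$): the only $S$-polynomials, formed from $t_i^{q-1}-t_1^{q-1}$ and $t_j^{q-1}-t_1^{q-1}$, equal $t_1^{q-1}(t_i^{q-1}-t_j^{q-1})$ and reduce to $t_1^{2(q-1)}-t_1^{2(q-1)}=0$. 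Hence the initial ideal is $(t_2^{q-1},\ldots,t_s^{q-1})$, which involves no power of $t_1$, so multiplication by $t_1$ sends distinct standard monomials to distinct standard monomials and is therefore injective on $S/J$. Finally, since $t_i^{q-1}-t_j^{q-1}=(t_i^{q-1}-t_1^{q-1})-(t_j^{q-1}-t_1^{q-1})\in J$ for all $i,j$, the ideal $J$ is invariant under every permutation of the variables, so by symmetry each $t_i$ is a non-zero-divisor on $S/J$. Thus $t_1\cdots t_s$ is a non-zero-divisor, the saturation is trivial, and $I(\mathbb{T})=J=(\{t_i^{q-1}-t_1^{q-1}\}_{i=2}^s)$, as claimed. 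The hard part is exactly this last step: confirming that the un-saturated presentation $J$ already equals the full lattice ideal, for which the Gr\"obner-basis computation---equivalently, symmetry together with a single non-zero-divisor check---is the essential input.
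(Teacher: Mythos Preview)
Your proof is correct and follows essentially the same route as the paper: take $v_i=e_i$, note that $I_\mathcal{A}=(0)$ and that $\mathbb{Z}^s/L\cong\mathbb{Z}$ is torsion-free, and invoke Theorem~\ref{vila-dictaminadora}. The paper's two-line proof leaves the triviality of the saturation $(J\colon(t_1\cdots t_s)^\infty)=J$ implicit, whereas you spell it out via a Gr\"obner basis check---a welcome detail, though you could shorten it by observing that the leading monomials $t_2^{q-1},\ldots,t_s^{q-1}$ are pairwise coprime, so Buchberger's first criterion applies without any $S$-polynomial reduction.
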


\begin{proof} The set $\mathbb{T}$ is an algebraic toric set
parameterized  by the 
monomials $y^{v_1},\ldots,y^{v_s}$, where $v_i=e_i$ for all $i$. Since 
$I_\mathcal{A}=(0)$ and the group 
$\mathbb{Z}^s/L=\mathbb{Z}^s/\mathbb{Z}\{e_i-e_1\}_{i=2}^s$ is
torsion-free, the equality follows from Theorem~\ref{vila-dictaminadora}. 
\end{proof}

In \cite{GRH} the evaluation codes associated to $\mathbb{T}$ are 
called {\it generalized Reed-Solomon} codes. Thus parameterized  codes are a natural
extension of this sort of codes.

If $D$ is an integral domain and $M$ is a $D$-module, then 
the {\it torsion sub-module\/} of $M$, denoted by $T_D(M)$, is the set of
all $m$ in $M$ such that $pm=0$
for some $0\neq p\in D$. We say that $M$ is 
{\it torsion-free\/} if $T_D(M)=(0)$. In what follows $D$ will always
be the ring 
of integers. Thus, we denote the set of zero divisors and 
the torsion sub-module of $M$ simply by $\mathcal{Z}(M)$ and $T(M)$ 
respectively. 

\begin{lemma}\label{mar18-01} Let
$\mathcal{A}=\{v_1,\ldots,v_s\}\subset\mathbb{Z}^n$, let 
$L=\mathbb{Z}\{v_i-v_1\}_{i=2}^s$ and let
$\mathcal{B}=\{(v_i,1)\}_{i=1}^s$. Then
\begin{enumerate}
\item[($\mathrm{i}_1$)] there is an isomorphism of groups 
$\tau{\colon}T(\mathbb{Z}^n/L)\rightarrow 
T(\mathbb{Z}^{n+1}/\mathbb{Z}\mathcal{B})$, 
given by $\tau(\overline{a})=\overline{(a,0)}$, 
\item[($\mathrm{i}_2$)] $\mathcal{Z}(\mathbb{Z}^n/L)=
\mathcal{Z}(\mathbb{Z}^{n+1}/\mathbb{Z}\mathcal{B})$,
\item[($\mathrm{i}_3$)] if $\mathcal{A}$ is homogeneous, then 
$I_\mathcal{A}=I_\mathcal{B}$.
\end{enumerate}
\end{lemma}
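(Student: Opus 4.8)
The plan is to prove $(\mathrm{i}_1)$ and $(\mathrm{i}_2)$ by exhibiting $\tau$ as (the restriction of) an isomorphism of the \emph{full} quotient groups, and to prove $(\mathrm{i}_3)$ by comparing the binomial generators of the two toric ideals. The starting observation is the identity $(v_i,1)=(v_i-v_1,0)+(v_1,1)$, which shows that
$$
\mathbb{Z}\mathcal{B}=\mathbb{Z}\{(v_i-v_1,0)\}_{i=2}^s+\mathbb{Z}(v_1,1)=L'+\mathbb{Z}(v_1,1),
$$
where $L'=\{(a,0)\,|\,a\in L\}\subset\mathbb{Z}^{n+1}$. First I would define the map
$$
\Phi\colon\mathbb{Z}^{n+1}/\mathbb{Z}\mathcal{B}\longrightarrow\mathbb{Z}^n/L,\qquad
\overline{(a,b)}\longmapsto\overline{a-bv_1},
$$
and check it is well defined: if $(a,b)=\sum_ic_i(v_i,1)\in\mathbb{Z}\mathcal{B}$ then $a-bv_1=\sum_ic_i(v_i-v_1)\in L$, so the image is $0$. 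A direct computation shows $\Phi$ is inverse to $\overline{a}\mapsto\overline{(a,0)}$: indeed $\Phi(\overline{(a,0)})=\overline{a}$, while $(a,b)-(a-bv_1,0)=b(v_1,1)\in\mathbb{Z}\mathcal{B}$ gives $\overline{(a,b)}=\overline{(a-bv_1,0)}$. Hence $\tau(\overline{a})=\overline{(a,0)}$ is an isomorphism of the full groups $\mathbb{Z}^n/L\cong\mathbb{Z}^{n+1}/\mathbb{Z}\mathcal{B}$ (it is well defined because $a\in L$ forces $(a,0)\in\mathbb{Z}\mathcal{B}$). Since any group isomorphism carries torsion to torsion bijectively, restricting $\tau$ yields the isomorphism $T(\mathbb{Z}^n/L)\to T(\mathbb{Z}^{n+1}/\mathbb{Z}\mathcal{B})$ asserted in $(\mathrm{i}_1)$.

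For $(\mathrm{i}_2)$ I would observe that the set of zero divisors of a $\mathbb{Z}$-module is an isomorphism invariant: for $r\in\mathbb{Z}$ one has $r\in\mathcal{Z}(\mathbb{Z}^n/L)$ iff $rm=0$ for some nonzero $m$, and applying the bijective $\mathbb{Z}$-linear map $\tau$ turns this into the identical statement for $\mathbb{Z}^{n+1}/\mathbb{Z}\mathcal{B}$. Thus the two zero-divisor sets, viewed as subsets of $\mathbb{Z}$, coincide. Note that neither $(\mathrm{i}_1)$ nor $(\mathrm{i}_2)$ uses homogeneity.

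For $(\mathrm{i}_3)$ I would use the binomial description~(\ref{oct12-09}). The relation $\sum_ia_i(v_i,1)=\sum_ib_i(v_i,1)$ is equivalent to $\sum_ia_iv_i=\sum_ib_iv_i$ together with $\sum_ia_i=\sum_ib_i$; hence $I_\mathcal{B}$ is generated by exactly those binomials $t^a-t^b\in I_\mathcal{A}$ that are homogeneous, giving $I_\mathcal{B}\subseteq I_\mathcal{A}$ unconditionally. For the reverse inclusion I would invoke homogeneity of $\mathcal{A}$: choosing $x_0\in\mathbb{Q}^n$ with $\langle v_i,x_0\rangle=1$ for all $i$, any generating binomial $t^a-t^b$ of $I_\mathcal{A}$ satisfies
$$
\sum_ia_i=\Big\langle\sum_ia_iv_i,x_0\Big\rangle=\Big\langle\sum_ib_iv_i,x_0\Big\rangle=\sum_ib_i,
$$
so it is automatically homogeneous and therefore lies in $I_\mathcal{B}$. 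This yields $I_\mathcal{A}\subseteq I_\mathcal{B}$ and hence equality.

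The only genuinely delicate point is $(\mathrm{i}_1)$, and the efficient way around it is to resist working inside the torsion submodules directly: establishing the isomorphism of the ambient quotients $\mathbb{Z}^n/L\cong\mathbb{Z}^{n+1}/\mathbb{Z}\mathcal{B}$ first makes the torsion statement, the zero-divisor statement, and the well-definedness of $\tau$ all immediate. The remaining work is the routine verification that $\Phi$ and $\tau$ are mutually inverse homomorphisms.
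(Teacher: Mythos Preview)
Your proof is correct. The approach differs from the paper's in two places worth noting.

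For $(\mathrm{i}_1)$ the paper works directly inside the torsion submodules: it checks that $\tau$ is a well-defined injective homomorphism on torsion, and then proves surjectivity by hand---given $\overline{(a,b)}\in T(\mathbb{Z}^{n+1}/\mathbb{Z}\mathcal{B})$ with $p(a,b)=\sum\lambda_i(v_i,1)$, it computes $p(a-bv_1)=\sum\lambda_i(v_i-v_1)\in L$ and shows $\tau(\overline{a-bv_1})=\overline{(a,b)}$. Your route is stronger: you establish that $\tau$ is an isomorphism of the \emph{full} quotients $\mathbb{Z}^n/L\cong\mathbb{Z}^{n+1}/\mathbb{Z}\mathcal{B}$ by exhibiting the inverse $\Phi$, and then both $(\mathrm{i}_1)$ and $(\mathrm{i}_2)$ fall out immediately as isomorphism invariants. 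This is cleaner and avoids the separate surjectivity argument; it also makes transparent why $(\mathrm{i}_2)$ holds (the paper simply says ``this is not hard'' and appeals to $\tau$).

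For $(\mathrm{i}_3)$ the paper does not argue at all but cites \cite[Corollary~7.2.42]{monalg}. Your direct argument via the binomial description~(\ref{oct12-09}) and the homogeneity hyperplane $x_0$ is exactly the standard unpacking of that citation and is self-contained.
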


\begin{proof} ($\mathrm{i}_1$): The map $\tau$ is clearly a well 
defined one to one homomorphism of groups. To prove that $\tau$ is onto 
let $\overline{(a,b)}\in T(\mathbb{Z}^{n+1}/\mathbb{Z}\mathcal{B})$
with $a\in\mathbb{Z}^n$, $b\in\mathbb{Z}$. There is $0\neq p\in\mathbb{N}$ such that 
$$
p(a,b)=\lambda_1(v_1,1)+\cdots+
\lambda_s(v_s,1)\ \ \ \ \ \ (\lambda_i\in\mathbb{Z}).
$$ 
Then $pa=\lambda_1v_1+\cdots+\lambda_sv_s$ and
$pb=\lambda_1+\cdots+\lambda_s$.  
Hence we obtain the equality
$$p(a-bv_1)=
\lambda_2(v_2-v_1)+\cdots+
\lambda_s(v_s-v_1).
$$
This means that $\overline{a-bv_1}$ is an element of
$T(\mathbb{Z}^n/L)$. It follows readily that $\tau(\overline{a-bv_1})=
\overline{(a,b)}$. Thus $\tau$ is onto. 
($\mathrm{i}_2$): This is not hard to prove. It follows using that 
the map $\tau$ is an isomorphism. ($\mathrm{i}_3$):
This follows by a direct  application of
\cite[Corollary~7.2.42]{monalg}.
\end{proof}

Using this lemma we will prove the next generalized version of
Theorem~\ref{vila-dictaminadora}, valid for any $\mathcal{A}$. 
The trick to show the next result is to lift 
$\mathcal{A}$ to a homogeneous set $\mathcal{B}$ in
$\mathbb{Z}^{n+1}$.

\begin{corollary}\label{sunday-morning-06-09-09} 
Let $\mathcal{A}=\{v_1,\ldots,v_s\}\subset\mathbb{Z}^n$ and let
$\mathcal{B}=\{(v_1,1),\ldots,(v_s,1)\}$. Then 
\begin{enumerate}
\item[\rm (a)] $
(I_\mathcal{B}+(t_2^{q-1}-t_1^{q-1},\ldots,t_s^{q-1}-t_1^{q-1})\colon
(t_1\cdots t_s)^\infty)\subset I(X)$.
\item[\rm (b)] Equality in {\rm (a)} holds if and only if 
$q-1\notin\mathcal{Z}(\mathbb{Z}^{n+1}/\mathbb{Z}\mathcal{B})$.
\item[\rm (c)] Let $p_1,\ldots,p_m$ be the prime numbers $($if any$)$ that
occur in the factorizations of the invariant factors of
the $\mathbb{Z}$-module $\mathbb{Z}^{n+1}/\mathbb{Z}\mathcal{B}$.
Equality in {\rm (a)} holds 
if and only if either $\mathbb{Z}^{n+1}/\mathbb{Z}\mathcal{B}$ is
torsion-free or $q\not\equiv 1\mod p_i$ for all $i$. 
\end{enumerate}
\end{corollary}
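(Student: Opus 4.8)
The plan is to reduce everything to the homogeneous case already settled in Theorem~\ref{vila-dictaminadora}, using the lifting trick encoded in Lemma~\ref{mar18-01}. The key observation is that $\mathcal{B}=\{(v_i,1)\}_{i=1}^s\subset\mathbb{Z}^{n+1}$ is automatically homogeneous: the linear functional picking out the last coordinate sends every $(v_i,1)$ to $1$. So Theorem~\ref{vila-dictaminadora} applies verbatim to $\mathcal{B}$, provided I check that the algebraic toric set $X$ built from $\mathcal{A}$ over $\mathbb{Z}^n$ coincides with the one built from $\mathcal{B}$ over $\mathbb{Z}^{n+1}$. For $\mathcal{B}$ the parametrization uses monomials $y^{v_i}w$ in variables $y_1,\ldots,y_n,w$; evaluating at $w\in K^*$ multiplies every coordinate by the same scalar $w$, which is killed upon passing to $\mathbb{P}^{s-1}$. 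Hence the projective toric set attached to $\mathcal{B}$ is exactly $X$, and therefore $I(X)$ is the same ideal whether we view it as coming from $\mathcal{A}$ or from $\mathcal{B}$. This is the conceptual heart of the lifting argument and is what makes part (a) and the ``if and only if'' of (b) fall out directly.

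For part (a), I would simply apply Theorem~\ref{vila-dictaminadora} to the homogeneous set $\mathcal{B}$: its conclusion gives the inclusion
$$
\bigl(I_\mathcal{B}+(\{t_i^{q-1}-t_1^{q-1}\}_{i=2}^s)\colon(t_1\cdots t_s)^\infty\bigr)\subset I(X),
$$
which is precisely the assertion of (a). No separate computation is needed once the identification of $X$ is in place.

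For part (b), Theorem~\ref{vila-dictaminadora} tells me that equality holds if and only if the multiplication-by-$(q-1)$ map $\phi\colon\mathbb{Z}^{n+1}/L_\mathcal{B}\to\mathbb{Z}^{n+1}/L_\mathcal{B}$ is injective, where $L_\mathcal{B}=\mathbb{Z}\{(v_i,1)-(v_1,1)\}_{i=2}^s=\mathbb{Z}\{(v_i-v_1,0)\}_{i=2}^s$. By Remark~\ref{oct15-09}, injectivity of $\phi$ is equivalent to $q-1\notin\mathcal{Z}(\mathbb{Z}^{n+1}/L_\mathcal{B})$. The remaining task is to replace $L_\mathcal{B}$ by $\mathbb{Z}\mathcal{B}$ in this zero-divisor condition. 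Here I would invoke Lemma~\ref{mar18-01}($\mathrm{i}_2$), which gives $\mathcal{Z}(\mathbb{Z}^n/L)=\mathcal{Z}(\mathbb{Z}^{n+1}/\mathbb{Z}\mathcal{B})$; combined with the analogous identity $\mathcal{Z}(\mathbb{Z}^{n+1}/L_\mathcal{B})=\mathcal{Z}(\mathbb{Z}^n/L)$ coming from the obvious splitting $\mathbb{Z}^{n+1}/L_\mathcal{B}\cong(\mathbb{Z}^n/L)\oplus\mathbb{Z}$, this yields $\mathcal{Z}(\mathbb{Z}^{n+1}/L_\mathcal{B})=\mathcal{Z}(\mathbb{Z}^{n+1}/\mathbb{Z}\mathcal{B})$, and (b) follows. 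I expect this bookkeeping between the three lattices $L$, $L_\mathcal{B}$, and $\mathbb{Z}\mathcal{B}$ to be the one genuinely delicate point, since one must be careful that the torsion phenomena recorded in $\mathbb{Z}^{n+1}/\mathbb{Z}\mathcal{B}$ are the same as those seen by $\phi$.

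For part (c), I would translate the zero-divisor condition of (b) into arithmetic on the invariant factors of the finitely generated abelian group $\mathbb{Z}^{n+1}/\mathbb{Z}\mathcal{B}$. Writing its torsion part as $\bigoplus_j\mathbb{Z}/d_j\mathbb{Z}$, an integer $m$ is a zero divisor precisely when $\gcd(m,d_j)>1$ for some $j$, i.e.\ when some prime $p_i$ appearing in the $d_j$ divides $m$. Applying this with $m=q-1$ shows that $q-1\in\mathcal{Z}(\mathbb{Z}^{n+1}/\mathbb{Z}\mathcal{B})$ iff $p_i\mid q-1$ for some $i$, that is, $q\equiv 1\bmod p_i$. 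Negating, equality in (a) holds iff the module is torsion-free (no $p_i$ exist) or $q\not\equiv 1\bmod p_i$ for every $i$, which is exactly the statement of (c).
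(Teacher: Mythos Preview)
Your proposal is correct and follows essentially the same route as the paper: lift to the homogeneous set $\mathcal{B}$, observe that the projective toric set for $\mathcal{B}$ coincides with $X$, apply Theorem~\ref{vila-dictaminadora} and Remark~\ref{oct15-09}, and then use Lemma~\ref{mar18-01}($\mathrm{i}_2$) together with the splitting $\mathbb{Z}^{n+1}/L_\mathcal{B}\cong(\mathbb{Z}^n/L)\oplus\mathbb{Z}$ to pass from $L_\mathcal{B}$ to $\mathbb{Z}\mathcal{B}$; part~(c) is the same structure-theorem computation in both. If anything, you are slightly more explicit than the paper about the intermediate identification $\mathcal{Z}(\mathbb{Z}^{n+1}/L_\mathcal{B})=\mathcal{Z}(\mathbb{Z}^n/L)$, which the paper leaves implicit.
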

\begin{proof} Let $w$ be a new parameter and let $X^w$ be the image
under the map $\varphi_s$ of the set 
$$
(X^*)^w=\{(x_1^{v_{11}}\cdots x_n^{v_{1n}}w,\ldots,x_1^{v_{s1}}\cdots
x_n^{v_{sn}}w)\vert\, x_i\in K^*\mbox{ for all
}i,\, w\in K^*\}.
$$
Clearly $\mathcal{B}$ is homogeneous because if we set $x_0=e_{n+1}$,
we get $\langle x_0,(v_i,1)\rangle=1$ for all $i$. By
Lemma~\ref{mar18-01} we have $\mathcal{Z}(\mathbb{Z}^n/L)=
\mathcal{Z}(\mathbb{Z}^{n+1}/\mathbb{Z}\mathcal{B})$, where 
$L=\mathbb{Z}\{v_i-v_1\}_{i=2}^s$. Therefore (a) and (b)
follow at once from Theorem~\ref{vila-dictaminadora} and
Remark~\ref{oct15-09} because $X=X^w$.  

We now prove (c). If $\mathbb{Z}^{n+1}/\mathbb{Z}\mathcal{B}$ is
torsion-free, then
equality holds in (a) by part (b). Hence we may assume that this
module has torsion. By the  
fundamental structure theorem of finitely 
generated abelian groups (see \cite[pp.~187-188]{JacI}) we have
\begin{equation}\label{fundamental-st-theo}
\mathbb{Z}^{n+1}/\mathbb{Z}\mathcal{B}\simeq \mathbb{Z}^{r_0}\times 
\mathbb{Z}_{q_1^{\alpha_1}}\times\cdots\times\mathbb{Z}_{q_r^{\alpha_r}},
\end{equation}
where $q_i\in\{p_1,\ldots,p_m\}$ and
$r_0=n+1-{\rm rank}(\mathbb{Z}\mathcal{B})$. From
Eq.~(\ref{fundamental-st-theo}) it is seen that one has the 
equality $\mathcal{Z}(\mathbb{Z}^{n+1}/\mathbb{Z}\mathcal{B})=
\cup_{i=1}^m(p_i)$. Therefore, by (b), equality holds in (a) if and only
if $q-1\notin \cup_{i=1}^m(p_i)$ if and only if $q\not\equiv 1\mod p_i$
for all $i$.
\end{proof}

\begin{corollary}\label{sept29-09-2} Let $G$ be a simple graph with vertex set
$V_G=\{y_1,\ldots,y_n\}$, edge set $E_G$, and let 
$\mathcal{A}$ be the set of all $e_i+e_j$ such that 
$\{y_i,y_j\}\in E_G$. If $c_1$ is the number of non-bipartite
connected components of $G$, then  the equality 
\begin{equation}\label{oct16-09}
(I_\mathcal{A}+(t_2^{q-1}-t_1^{q-1},\ldots,t_s^{q-1}-t_1^{q-1})\colon
(t_1\cdots t_s)^\infty)=I(X)
\end{equation}
holds if and only if either $0\leq c_1\leq 1$ or $c_1\geq 2$ and
${\rm char}(K)=2$. In particular equality holds for any finite field
$K$ if $G$ is connected or if $G$ is bipartite.
\end{corollary}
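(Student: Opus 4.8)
The plan is to reduce the assertion to the arithmetic criterion of Corollary~\ref{sunday-morning-06-09-09}(c) and then carry out the combinatorial computation of the torsion of $\mathbb{Z}^{n+1}/\mathbb{Z}\mathcal{B}$, where $\mathcal{B}=\{(v_i,1)\}_{i=1}^s$. First I would observe that $\mathcal{A}=\{e_i+e_j:\{y_i,y_j\}\in E_G\}$ is homogeneous: taking $x_0=\frac12(1,\dots,1)\in\mathbb{Q}^n$ gives $\langle e_i+e_j,x_0\rangle=1$ for every edge. Hence by Lemma~\ref{mar18-01}$(\mathrm{i}_3)$ one has $I_\mathcal{A}=I_\mathcal{B}$, so the left-hand side of Eq.~(\ref{oct16-09}) is exactly the ideal in Corollary~\ref{sunday-morning-06-09-09}(a). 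By part~(c) of that corollary, equality in Eq.~(\ref{oct16-09}) holds if and only if $\mathbb{Z}^{n+1}/\mathbb{Z}\mathcal{B}$ is torsion-free or $q\not\equiv 1\pmod{p_i}$ for every prime $p_i$ occurring in its invariant factors. Thus everything comes down to describing the torsion subgroup $T(\mathbb{Z}^{n+1}/\mathbb{Z}\mathcal{B})$ and the primes $p_i$.

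The key reduction I would make is to strip off the homogenizing coordinate. Since $\mathcal{A}$ is homogeneous with $x_0=\frac12\mathbf{1}$, every $(w,t)\in\mathbb{Q}\mathcal{B}$ satisfies $t=\langle x_0,w\rangle=\frac12\sum_v w_v$, so the last coordinate is determined by $w$. Projecting onto the first $n$ coordinates then identifies $\mathbb{Z}\mathcal{B}$ with $\mathbb{Z}\mathcal{A}$, and the saturation $\mathbb{Q}\mathcal{B}\cap\mathbb{Z}^{n+1}$ with $P:=\{w\in\mathbb{Q}\mathcal{A}\cap\mathbb{Z}^n:\sum_v w_v\text{ even}\}$, the evenness being forced by integrality of $t$. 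Hence $T(\mathbb{Z}^{n+1}/\mathbb{Z}\mathcal{B})\cong P/\mathbb{Z}\mathcal{A}$, turning the problem into a purely lattice-theoretic question about edge vectors.

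Now I would split along the connected components $G_1,\dots,G_c$, writing $\mathbb{Z}^n=\bigoplus_k\mathbb{Z}^{V_{G_k}}$, $\mathbb{Z}\mathcal{A}=\bigoplus_k\mathbb{Z}\mathcal{A}_k$, and $\mathbb{Q}\mathcal{A}\cap\mathbb{Z}^n=\bigoplus_k S_k$ with $S_k=\mathbb{Q}\mathcal{A}_k\cap\mathbb{Z}^{V_{G_k}}$. Here I would invoke the standard facts (see \cite{monalg}) that for a connected component $S_k=\mathbb{Z}\mathcal{A}_k$ when $G_k$ is bipartite, while for non-bipartite $G_k$ one has $2e_v\in\mathbb{Z}\mathcal{A}_k$ for all $v$, $S_k=\mathbb{Z}^{V_{G_k}}$, and $S_k/\mathbb{Z}\mathcal{A}_k\cong\mathbb{Z}/2\mathbb{Z}$ detected by the parity $\sum_{v}w_v\bmod 2$. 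Since $2S_k\subseteq\mathbb{Z}\mathcal{A}_k$ in all cases, it follows that $2P\subseteq\mathbb{Z}\mathcal{A}$, so $T(\mathbb{Z}^{n+1}/\mathbb{Z}\mathcal{B})$ is an elementary abelian $2$-group; in particular the only prime that can occur among the $p_i$ is $2$. To pin down when $T$ vanishes, I would feed the global parity map $w\mapsto\sum_v w_v\bmod 2$ into $\bigoplus_k S_k/\mathbb{Z}\mathcal{A}_k\cong(\mathbb{Z}/2)^{c_1}$, where only the $c_1$ non-bipartite components contribute: because $P$ is the kernel of this parity and each non-bipartite generator $\bar e_v$ has odd parity, one obtains that $T$ is the kernel of the sum map $(\mathbb{Z}/2)^{c_1}\to\mathbb{Z}/2$, i.e.\ $T\cong(\mathbb{Z}/2)^{\max(c_1-1,\,0)}$. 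Concretely, for $c_1\le 1$ the map is injective on the relevant summand and $T=0$, while for two non-bipartite components $G_1,G_2$ with chosen vertices $u_1,u_2$ the class of $e_{u_1}-e_{u_2}$ is a nonzero $2$-torsion element.

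Finally I would assemble the pieces. If $0\le c_1\le 1$ then $T=0$, so $\mathbb{Z}^{n+1}/\mathbb{Z}\mathcal{B}$ is torsion-free and equality in Eq.~(\ref{oct16-09}) holds for every $K$. If $c_1\ge 2$ then $T$ is a nontrivial $2$-group, the only relevant prime is $p=2$, and Corollary~\ref{sunday-morning-06-09-09}(c) gives equality if and only if $q\not\equiv 1\pmod 2$, that is, if and only if $q$ is even, equivalently ${\rm char}(K)=2$. This is exactly the stated dichotomy, and the final sentence (equality for connected or bipartite $G$) is the special case $c_1\le 1$. The genuinely delicate point, which I expect to be the main obstacle, is the torsion computation of the previous paragraph: the interaction between the per-component index-$2$ phenomenon and the single global parity constraint introduced by homogenization is precisely what collapses $(\mathbb{Z}/2)^{c_1}$ to $(\mathbb{Z}/2)^{c_1-1}$ and makes the case $c_1=1$ behave like the bipartite one.
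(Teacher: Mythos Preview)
Your proposal is correct and follows essentially the same approach as the paper: both reduce via $I_\mathcal{A}=I_\mathcal{B}$ to Corollary~\ref{sunday-morning-06-09-09}(c), invoke the known structure $T(\mathbb{Z}^n/\mathbb{Z}\mathcal{A})\cong(\mathbb{Z}/2)^{c_1}$, and then show that passing to $\mathcal{B}$ kills exactly one $\mathbb{Z}/2$ factor via the global parity map. The only difference is packaging---the paper quotes the exact sequence $0\to T(\mathbb{Z}^{n+1}/\mathbb{Z}\mathcal{B})\to T(\mathbb{Z}^n/\mathbb{Z}\mathcal{A})\to\mathbb{Z}_2\to 0$ from \cite{birational} and tracks orders through $\Delta_r$, whereas you unfold this by hand as $T(\mathbb{Z}^{n+1}/\mathbb{Z}\mathcal{B})\cong P/\mathbb{Z}\mathcal{A}$ with $P$ the even-parity sublattice; your version is more self-contained but otherwise the same argument.
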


\begin{proof} Let $\mathcal{A}=\{v_1,\ldots,v_s\}$ and let
$\mathcal{B}=\{(v_1,1),\ldots,(v_s,1)\}$ be a lifting of
$\mathcal{A}$. 
Notice that
$I_\mathcal{A}=I_\mathcal{B}$ because $\mathcal{A}$ is homogeneous,
see Lemma~\ref{mar18-01}($\mathrm{i}_3$). We denote the
matrix whose columns are the vectors in $\mathcal{A}$ (resp.
$\mathcal{B}$) by $A$ (resp.
$B)$. The matrices $A$ and $B$ have the same rank $r$. We denote the 
greatest common divisor of all the non-zero $r\times r$
sub-determinants of $A$ (resp. $B)$ by $\Delta_r(A)$ (resp.
$\Delta_r(B)$).  

We claim that $\Delta_r(B)=2^{c_1-1}$ if $c_1\geq 1$ and
$\Delta_r(B)=1$ if $c_1=0$. If $c_1=0$, then $G$ is bipartite. Thus 
$\Delta_r(B)=1$ because in this case $A$ is totally unimodular 
\cite[p.~273]{Schr}, i.e., any sub-determinant of $A$ is equal to $0$
or $\pm 1$. Assume that $c_1\geq 1$, i.e., $G$ is not bipartite.
There is an exact sequence of groups
\begin{equation}\label{exact-sequence-birational}
0\longrightarrow
T(\mathbb{Z}^{n+1}/\mathbb{Z}{\mathcal B})
\stackrel{\vartheta}{\longrightarrow}
T(\mathbb{Z}^n/\mathbb{Z}{\mathcal A})\stackrel{\psi}{\longrightarrow} 
\mathbb{Z}_2\longrightarrow 0, 
\end{equation}
where the homomorphisms are defined as follows. For
$a=(a_1,\ldots,a_n)\in\mathbb{Z}^n$ 
and $b\in\mathbb{Z}$, we set  
$$
\vartheta(\overline{a,b)}=\overline{a}\ \mbox{ and }\ 
\psi(\overline{a})=\overline{a_1+\cdots+a_n}.
$$
It is not hard to verify that $\vartheta$ is injective, $\psi$ is
onto, and ${\rm im}(\vartheta)={\rm ker}(\psi)$. The exact sequence of 
Eq.~(\ref{exact-sequence-birational}) is a particular case 
of \cite[Eq.~$(*)$, p.~2044]{birational}. It is well known
\cite[pp.~187-188]{JacI} that the orders of the groups $T(\mathbb{Z}^{n}/\mathbb{Z}\mathcal{A})$ and 
$T(\mathbb{Z}^{n+1}/\mathbb{Z}\mathcal{B})$ are $\Delta_r(A)$ and $\Delta_r(B)$ respectively.  
Therefore, using the exact sequence above, 
we get $\Delta_r(A)=2\Delta_r(B)$. By a result of \cite{Kulk1} we have
\begin{equation}\label{apr6-01-1}
\mathbb{Z}^n/\mathbb{Z}{\mathcal A}
\simeq \mathbb{Z}^{n-r}\times
\mathbb{Z}_2^{c_1}=\mathbb{Z}^{c_0}\times\mathbb{Z}_2^{c_1}
\end{equation}
and $r=n-c_0$, where $c_0$ is the number of bipartite components of
$G$. Hence $\Delta_r(A)=2^{c_1}$, and consequently
$\Delta_r(B)=2^{c_1-1}$ as claimed. This means that
$\mathbb{Z}^{n+1}/\mathbb{Z}\mathcal{B}$ is torsion-free if and
only if $c_1=1$. It also means that $p_1=2$ is the only prime
factor that can occur in the factorizations of the invariant factors of 
$\mathbb{Z}^{n+1}/\mathbb{Z}\mathcal{B}$. The number of elements
of $K$ is equal to $q=p^u$ for some prime number $p$ and some $u\geq
1$, where $p$ is the characteristic of the field $K$. Altogether, by 
Corollary~\ref{sunday-morning-06-09-09}(c), we get that equality holds
in Eq.~(\ref{oct16-09}) if and only if $0\leq c_1\leq 1$ or $c_1\geq 2$ and 
$p^u\not\equiv 1\mod 2$ if and only if $0\leq c_1\leq 1$ or $c_1\geq 2$ 
and $p=2$. 
\end{proof}

\begin{example}\rm Let $\mathcal{A}$ be the point configuration consisting of the
following  points in $\mathbb{Z}^6$:
$$
\begin{array}{ccc}
v_1=(1,1,0,0,0,0),&v_2=(0,1,1,0,0,0),&v_3=(1,0,1,0,0,0),\\
v_4=(0,0,0,1,1,0),&v_5=(0,0,0,0,1,1),&v_6=(0,0,0,1,0,1).
\end{array}
$$
In this case we have
$\mathbb{Z}^6/\mathbb{Z}\mathcal{A}\simeq\mathbb{Z}_2\times\mathbb{Z}_2$
and
$\mathbb{Z}^7/\mathbb{Z}\mathcal{B}\simeq\mathbb{Z}\times\mathbb{Z}_2$.
If $K$ is a finite field with $q=2^m$ elements, then $q\not\equiv
1\mod 2$ and $I_\mathcal{A}=I_\mathcal{B}=0$. Thus using
Corollary~\ref{sunday-morning-06-09-09}(c) we get the equality
$I(X)=(\{t_i^{q-1}-t_1^{q-1}\}_{i=2}^6)$. If $K$ 
is a field with $3$ elements, then using {\em
Macaulay\/}$2$ \cite{mac2} together with
Theorem~\ref{ipn-ufpe-cinvestav} it is seen that $I(X)$ is minimally 
generated by $15$ binomials. In this case we do not have  
equality in Corollary~\ref{sunday-morning-06-09-09}(a). 
\end{example}

The next result can be shown using the argument in the proof 
of Theorem~\ref{ipn-ufpe-cinvestav}.

\begin{theorem}\label{ipn-ufpe-cinvestav-1} Let
$B=K[t_1,\ldots,t_s,y_0,y_1,\ldots,y_n,z]$ be  
a polynomial ring over a finite field $K=\mathbb{F}_q$ and let
$v_i\in\mathbb{Z}^n$ for all
$i$. The following holds{\rm :}
\begin{enumerate}
\item[\rm (a)]
$I(X)=(y^{v_1^-}t_1-y^{v_1^+}z,\ldots,y^{v_s^-}t_s-y^{v_s^+}z,
y_1^{q-1}-1,\ldots,y_n^{q-1}-1,y_0y_1\cdots y_n-1)\cap S$. 
\item[\rm (b)] $I(X)$ is a Cohen-Macaulay lattice ideal and $\dim S/I(X)=1$.
\end{enumerate}
\end{theorem}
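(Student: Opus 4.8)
The plan is to run the proof of Theorem~\ref{ipn-ufpe-cinvestav} essentially verbatim, the only genuinely new point being that the monomials $y^{v_i}$ are now Laurent monomials. The device that makes this work is that, modulo the relations $y_j^{q-1}-1$ (together with $y_0y_1\cdots y_n-1$), every variable $y_j$ becomes a unit, so each Laurent monomial $y^{v_i}$ can be represented by a genuine polynomial monomial. Set $B=K[t_1,\dots,t_s,y_0,\dots,y_n,z]$ and let $I'$ denote the ideal on the right of the claimed equality in (a), before intersecting with $S$; as throughout, write $v_i=v_i^+-v_i^-$. I would first record the harmless observation that $X$ is unchanged if one translates all exponent vectors by a fixed $u\in\mathbb{Z}^n$, since $[(x^{v_1+u},\dots,x^{v_s+u})]=[x^u(x^{v_1},\dots,x^{v_s})]=[(x^{v_1},\dots,x^{v_s})]$ for $x\in(K^*)^n$; choosing $u$ with $v_i+u\in\mathbb{N}^n$ this already reduces part (b) to Theorem~\ref{ipn-ufpe-cinvestav}(b),(c). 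The real work is therefore concentrated in the exact generator formula of part (a).

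For the inclusion $I(X)\subset I'\cap S$, the crucial preliminary step is to exhibit, for each $i$, a genuine monomial $p_i(y)\in K[y_1,\dots,y_n]$ with $p_i(x)=x^{v_i}$ for all $x\in(K^*)^n$ and with $t_i-p_i(y)z\in I'$. Taking $p_i(y)=y^{v_i^+}\prod_j y_j^{(q-2)v_{ij}^-}$ and using $y_j^{q-1}\equiv 1$, one computes $y^{v_i^-}p_i(y)\equiv y^{v_i^+}$, whence $y^{v_i^-}(t_i-p_i(y)z)\equiv y^{v_i^-}t_i-y^{v_i^+}z\equiv 0\pmod{I'}$; since $y^{v_i^-}$ is a unit modulo $I'$, it follows that $t_i\equiv p_i(y)z\pmod{I'}$. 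From here the argument is identical to Theorem~\ref{ipn-ufpe-cinvestav}: for homogeneous $F$ of degree $d$ vanishing on $X$ we obtain $F\equiv z^dF(p_1(y),\dots,p_s(y))\pmod{I'}$, we reduce $F(p_1,\dots,p_s)$ modulo the $y_j^{q-1}-1$ to a remainder $G$ with $\deg_{y_j}(G)<q-1$, and since $G(x)=F(x^{v_1},\dots,x^{v_s})=0$ on $(K^*)^n$ we conclude $G=0$ by the combinatorial Nullstellensatz \cite{alon-cn}; hence $F\in I'$.

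For the reverse inclusion $I'\cap S\subset I(X)$, all generators of $I'$ are binomials, so by Buchberger's algorithm and elimination \cite{CLO} the ideal $I'\cap S$ is generated by binomials, and it suffices to treat a binomial $f=t^a-t^b\in I'\cap S$. Homogeneity of $f$ follows by the same device as before (setting every $y_j=1$ and $t_i=z$ collapses $f$ to $z^{\sum a_i}-z^{\sum b_i}$, forcing $\sum a_i=\sum b_i$; equivalently $I'$ is homogeneous for the grading $\deg t_i=\deg z=1$, $\deg y_j=0$, so $I'\cap S$ is homogeneous in the standard grading). That $f$ vanishes on $X$ follows by writing $f$ in terms of the generators of $I'$ and evaluating at $t_i=x^{v_i}$, $y_j=x_j$, $z=1$, $y_0=(x_1\cdots x_n)^{-1}$, which annihilates every generator. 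This proves (a), and in particular shows $I(X)$ is a binomial ideal.

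Finally, part (b) I would deduce exactly as in Theorem~\ref{ipn-ufpe-cinvestav}(b),(c), an argument that uses nothing about the sign of the exponents: the primary decomposition $I(X)=\bigcap_{[P]\in X}I_{[P]}$ of Eq.~(\ref{primdec-ix}) holds for any finite set of projective points, each $I_{[P]}$ is prime of height $s-1$ (so $\dim S/I(X)=1$ and $I(X)$ is radical), and because every coordinate $x^{v_i}$ of a point of $X$ is nonzero we have $t_i\notin I_{[P]}$, hence $t_i\notin\mathcal{Z}_S(S/I(X))=\bigcup_{[P]}I_{[P]}$; combined with the binomial property from (a) this gives that $I(X)$ is a lattice ideal by \cite[Corollary~2.5]{EisStu}, and the $t_i$ form a homogeneous regular system of parameters, so $S/I(X)$ is Cohen--Macaulay by \cite[Proposition~2.2.7]{monalg}. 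The only real obstacle is the first inclusion in (a): one must verify that the Laurent monomial $y^{v_i}$ can be legitimately replaced by the polynomial monomial $p_i(y)$ modulo $I'$, i.e.\ that the identity $t_i\equiv p_i(y)z$ is valid. Once that identity is established, the rest of the proof is a transcription of the $\mathbb{N}^n$ case.
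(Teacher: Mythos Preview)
Your proposal is correct and is exactly the adaptation the paper has in mind: the paper offers no separate proof for this theorem, merely saying it ``can be shown using the argument in the proof of Theorem~\ref{ipn-ufpe-cinvestav}.'' Your only substantive addition is the monomial $p_i(y)=y^{v_i^+}\prod_j y_j^{(q-2)v_{ij}^-}$ witnessing $t_i\equiv p_i(y)z\pmod{I'}$, which is precisely the missing step needed to make the earlier argument go through for Laurent exponents; after that, everything (the reduction to the remainder $G$, the combinatorial Nullstellensatz, the binomial elimination and homogeneity argument for the reverse inclusion, and the primary-decomposition proof of (b)) is a verbatim transcription of Theorem~\ref{ipn-ufpe-cinvestav}.
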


\section{The length of parameterized codes and the degree of $S/I(X)$}
\label{degree-of-coordinate-ring}  
We continue
using the 
definitions and terms from the introduction and 
from Section~\ref{section-parameterized}. Let
$\mathcal{A}=\{v_1,\ldots,v_s\}\subset\mathbb{Z}^n$ and let $X$ be an
algebraic 
toric set parameterized by the Laurent monomials
$y^{v_1},\ldots,y^{v_s}$. In this section we study
$|X|$, the degree of $S/I(X)$. The motivation to study $|X|$ comes
from coding theory because this number represents the length of
$C_X(d)$, the parameterized code of order $d$.

As before, we denote the Hilbert polynomial of $S/I(X)$ by $h_X(t)$. 
The {\it index of regularity\/} of $S/I(X)$, denoted by 
${\rm reg}(S/I(X))$, is the least integer $p\geq 0$ such that
$h_X(d)=H_X(d)$ for $d\geq p$. The degree and the regularity index can be
read off the Hilbert series as we now explain. The Hilbert series of
$S/I(X)$ can be 
written as
$$
F_X(t):=\sum_{i=0}^{\infty}H_X(i)t^i=\sum_{i=0}^{\infty}\dim_K(S/I(X))_it^i=
\frac{h_0+h_1t+\cdots+h_rt^r}{1-t},
$$
where $h_0,\ldots,h_r$ are positive integers. Indeed
$h_i=\dim_K(S/(I(X),t_s))_i$.  
This follows from the fact that $I(X)$ is a Cohen-Macaulay lattice
ideal. The number
$r$  equals the regularity index of $S/I(X)$ and the degree of
$S/I(X)$ equals $h_0+\cdots+h_r$ (see \cite{Sta1} or 
\cite[Corollary~4.1.12]{monalg}).  

Although Theorems~\ref{ipn-ufpe-cinvestav}
and \ref{ipn-ufpe-cinvestav-1} provide an effective method to compute
the degree with {\em Macaulay\/}$2$ \cite{mac2}, we seek other methods that can
lead to explicit formulas for $|X|$ for certain families of point
configurations, especially for these arising from finite graphs. 

At the other end, the number of elements of $X^*$, the affine counterpart of $X$,
can alternatively be obtained by using linear algebra methods over the ring
$\mathbb{Z}/(q-1)\mathbb{Z}$, i.e., by solving linear systems over this
ring. This may then be used to estimate $|X|$. As mentioned before, some of the results of this
paper have an affine version. We can think of this linear algebra
approach to compute $|X^*|$ 
as the analog of
Proposition~\ref{sept29-09}, which is a device
that enables  to use linear programming methods.
The multiplicity of approaches is a hint of the mathematical richness embodied in the  
parametrization models dealt with
in this work.

We begin by presenting a direct method, 
based on integer programming \cite{Schr}, to compute the degree of
$S/I(X)$. A key element here is the fact that $X$ is a multiplicative
group as explained in Section~\ref{section-parameterized}. 
Let $\mathbb{T}^*=(K^*)^n$ be an affine algebraic torus of
dimension $n$. There is a surjective homomorphism of multiplicative groups
$$
\theta\colon\mathbb{T}^*\longrightarrow X\, ;\ \ \ \ \ \ \ \ 
(x_1,\ldots,x_n)\stackrel{\theta}{\longmapsto}
[(x^{v_1},\ldots,x^{v_s})]. 
$$
Therefore $\mathbb{T}^*/{\rm ker}(\theta)\simeq X$ and
$|\mathbb{T}^*|=(q-1)^{n}=|X||{\rm ker}(\theta)|$. Thus computing
$|X|$ amounts to computing $|{\rm ker}(\theta)|$. 

\begin{lemma}\label{oct16-09-1}
Let $(x_i)=(\beta^{\ell_1},\ldots,\beta^{\ell_n})\in\mathbb{T}^*$ with
$\beta$ a generator of 
$(K^*,\,\cdot\, )$ and $0\leq \ell_i\leq q-2$ for all $i$. Then
$(x_i)\in{\rm ker}(\theta)$ if and only if there are unique integers
$\lambda_1,\ldots,\lambda_s,\mu$ such that 
$$
{\ell}A=(q-1)\lambda+\mu\mathbf{1};\ \ 0\leq\mu\leq q-2;\
\ell=(\ell_i);\ \lambda=(\lambda_i);\ \mathbf{1}=(1,\ldots,1).
$$
\end{lemma}

\begin{proof} Assume that $(x_i)\in{\rm ker}(\theta)$. Then 
$[(x^{v_1},\ldots,x^{v_s})]=[\mathbf{1}]$. This means that there is an
integer $\mu$ such that $0\leq\mu\leq q-2$ and 
$$
\beta^{\langle v_i,\ell\rangle}=\beta^\mu\mbox{ for all }i.
$$
Hence there are integers $\lambda_1,\ldots,\lambda_s$ such that 
$$
{\langle v_i,\ell\rangle}-\mu=(q-1)\lambda_i\mbox{ for all }i\ 
\Rightarrow\ {\ell}A=(q-1)\lambda+\mu\mathbf{1},
$$
as required. To show the uniqueness assume that 
${\langle v_i,\ell\rangle}-\mu=(q-1)\lambda_i$ and 
${\langle v_i,\ell\rangle}-\mu'=(q-1)\lambda_i'$ for some $i$. Then
$(q-1)(\lambda_i-\lambda_i')=\mu'-\mu$. Since $|\mu'-\mu|$ is at most
$q-2$, we get $\lambda_i=\lambda_i'$ and $\mu'=\mu$. The converse
follows readily by direct substitution of $x_i=\beta^{\ell_i}$ 
into $[(x^{v_1},\ldots,x^{v_s})]$.
\end{proof}

\begin{remark}
If $v_i\in\mathbb{N}^n$, then $\lambda_i\geq 0$. This
follows by dividing $\langle v_i,\ell\rangle$ by $(q-1)$.
\end{remark}

\begin{proposition}\label{sept29-09} The map $\beta^\ell\mapsto (\ell,\lambda,\mu)$ gives a
bijection between ${\rm ker}(\theta)$ and the integral vectors 
of the polytope
$$
\mathcal{P}=\{(\ell,\lambda,\mu)\vert\, \ell=(\ell_i);\,
\lambda=(\lambda_i);\, 
{\ell}A=(q-1)\lambda+\mu\mathbf{1};\, 0\leq \ell_i\leq q-2\mbox{ for
all }i;0\leq\mu\leq q-2\}.
$$
In particular the number of integral vectors of $\mathcal{P}$
equals $|{\rm ker}(\theta)|$.
\end{proposition}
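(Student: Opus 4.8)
The plan is to exhibit the asserted map explicitly and then verify it is a bijection by producing a two-sided inverse, leaning almost entirely on the characterization of $\ker(\theta)$ already established in Lemma~\ref{oct16-09-1}. The key observation is that Lemma~\ref{oct16-09-1} does all the heavy lifting: it asserts that an element $\beta^\ell=(\beta^{\ell_1},\ldots,\beta^{\ell_n})$ with $0\le\ell_i\le q-2$ lies in $\ker(\theta)$ \emph{if and only if} there exist \emph{unique} integers $\lambda=(\lambda_1,\ldots,\lambda_s)$ and $\mu$ with $0\le\mu\le q-2$ satisfying $\ell A=(q-1)\lambda+\mu\mathbf{1}$. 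What remains is essentially to package this equivalence as a statement about integer points of the polytope $\mathcal{P}$.

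First I would make precise the domain of the map. Since $\beta$ generates the cyclic group $(K^*,\cdot)$ of order $q-1$, every element of $\mathbb{T}^*=(K^*)^n$, and in particular every element of $\ker(\theta)$, has a \emph{unique} representation as $\beta^\ell$ with $\ell=(\ell_1,\ldots,\ell_n)$ and $0\le\ell_i\le q-2$. This makes the assignment $\beta^\ell\mapsto(\ell,\lambda,\mu)$ well defined as a function on $\ker(\theta)$, because Lemma~\ref{oct16-09-1} guarantees both the \emph{existence} of $(\lambda,\mu)$ (so the map has an output) and its \emph{uniqueness} (so the output is single-valued). I would then check that the triple $(\ell,\lambda,\mu)$ actually satisfies all the defining inequalities and the linear relation of $\mathcal{P}$: the constraints $0\le\ell_i\le q-2$ hold by our choice of representative, $0\le\mu\le q-2$ and the relation $\ell A=(q-1)\lambda+\mu\mathbf{1}$ are exactly what Lemma~\ref{oct16-09-1} provides. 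Hence the image lands in the set of integral vectors of $\mathcal{P}$.

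Next I would verify injectivity and surjectivity onto the integer points. Injectivity is immediate: the first coordinate $\ell$ already determines $\beta^\ell$, so distinct elements of $\ker(\theta)$ have distinct images. For surjectivity, given any integral point $(\ell,\lambda,\mu)\in\mathcal{P}$, the constraints force $0\le\ell_i\le q-2$, so $\beta^\ell$ is a legitimate element of $\mathbb{T}^*$; the relation $\ell A=(q-1)\lambda+\mu\mathbf{1}$ together with $0\le\mu\le q-2$ is precisely the condition that the converse direction of Lemma~\ref{oct16-09-1} requires, and that lemma's ``if'' direction then shows $\beta^\ell\in\ker(\theta)$. The uniqueness clause of the lemma guarantees that the $(\lambda,\mu)$ recovered from $\beta^\ell$ is exactly the given one, so this point is genuinely the image of $\beta^\ell$. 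Thus the map is a bijection, and counting integer points of $\mathcal{P}$ computes $|\ker(\theta)|$, giving the final sentence.

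I do not anticipate a serious obstacle here, since Lemma~\ref{oct16-09-1} was evidently engineered to yield this statement; the only point demanding care is the bookkeeping of the \emph{uniqueness} of the representative $\ell$ versus the uniqueness of $(\lambda,\mu)$. One must not conflate these: the integrality of $\lambda$ is not a separately imposed inequality but follows automatically from the relation $\ell A-\mu\mathbf{1}\in(q-1)\mathbb{Z}^s$, so I would note that any integral point of $\mathcal{P}$ automatically has $\lambda\in\mathbb{Z}^s$ consistent with its defining equation, and conversely that the lemma's uniqueness prevents two distinct polytope points from mapping back to the same group element. Making this correspondence symmetric is the whole content, and it reduces to a clean restatement of the lemma.
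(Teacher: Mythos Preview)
Your proposal is correct and follows exactly the paper's approach: the paper's own proof is the single sentence ``By Lemma~\ref{oct16-09-1} the map $\beta^\ell\mapsto (\ell,\lambda,\mu)$ is well defined and bijective,'' and your write-up is simply a careful unpacking of that sentence.
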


\begin{proof} By Lemma~\ref{oct16-09-1} the map $\beta^\ell\mapsto
(\ell,\lambda,\mu)$ is well defined and bijective.
\end{proof}

\begin{example}\rm Let $A$ be the matrix with column vectors  
$v_1=(1,1,0,0)$, $v_2=(0,1,1,0)$, $v_3=(0,0,1,1)$, $v_4=(1,0,0,1)$.
Let $K$ be a field with $q=5$ elements. The integral points of
$\mathcal{P}$ and the elements of ${\rm ker}(\theta)$ 
can be found directly using 
{\it Porta\/} \cite{porta}. A computation with this program
shows that $\mathcal{P}\cap\mathbb{Z}^{n+s+1}$ has $16$ points and
that ${\ker}(\theta)$ is equal to 
$$
\begin{array}{cccc}
(\beta^0, \beta^0, \beta^0, \beta^0  ),&
(\beta^0, \beta^1, \beta^0, \beta^1  ),&
(\beta^0, \beta^2, \beta^0, \beta^2  ),&
(\beta^0, \beta^3, \beta^0, \beta^3  ),\\
(\beta^1, \beta^0, \beta^1, \beta^0  ),&
(\beta^1, \beta^1, \beta^1, \beta^1  ),&
(\beta^1, \beta^2, \beta^1, \beta^2  ),&
(\beta^1, \beta^3, \beta^1, \beta^3  ),\\
(\beta^2, \beta^0, \beta^2, \beta^0  ),&
(\beta^2, \beta^1, \beta^2, \beta^1  ),&
(\beta^2, \beta^2, \beta^2, \beta^2  ),&
(\beta^2, \beta^3, \beta^2, \beta^3  ),\\
(\beta^3, \beta^0, \beta^3, \beta^0  ),&
(\beta^3, \beta^1, \beta^3, \beta^1  ),&
(\beta^3, \beta^2, \beta^3, \beta^2  ),&
(\beta^3, \beta^3, \beta^3, \beta^3  ).
\end{array}
$$
Hence in this case one has $4^{4}=(q-1)^{n}=|X||{\rm
ker}(\theta)|=|X|16$. Then $|X|=16$.
\end{example}

Before we state our next result, recall that a subset 
$\mathcal{B}\subset\mathbb{Z}^{n+1}$ is called a {\it Hilbert basis} if
$\mathbb{N}\mathcal{B}=\mathbb{R}_+\mathcal{B}\cap\mathbb{Z}^{n+1}$,
where $\mathbb{N}\mathcal{B}$ is the semigroup generated by
$\mathcal{B}$, and $\mathbb{R}_+\mathcal{B}$ is the {\it polyhedral
cone\/} generated by $\mathcal{B}$  consisting of the 
linear combinations of $\mathcal{B}$ with non-negative 
coefficients. A polyhedral cone containing no lines is called 
{\it pointed}. The subgroup of $\mathbb{Z}^{n+1}$ 
generated by $\mathcal{B}$ is denoted by $\mathbb{Z}\mathcal{B}$. The
ideal $I(X)$ is called a {\it complete 
intersection\/} if it can be generated by $s-1$ homogeneous
polynomials of $S$.

\begin{theorem}\label{sept23-09} 
Let $\mathcal{B}=\{(v_1,1),\ldots,(v_s,1)\}$ and let $r={\rm
rank}(\mathbb{Z}\mathcal{B})$. If the polyhedral cone
$\mathbb{R}_+\mathcal{B}$ is pointed and $\mathcal{B}$ is a 
Hilbert basis, then $(q-1)^{r-1}$ divides $|X|$.
\end{theorem}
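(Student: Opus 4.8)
The plan is to compute $|X|$ exactly, showing that under the hypotheses it equals $(q-1)^{r-1}$; divisibility is then immediate. Throughout I write $m=q-1$ and exploit the group structure of $X$. Since the last coordinate of each vector of $\mathcal{B}$ equals $1$, the set $\mathcal{B}$ is homogeneous and $X=X^w$, the image under $\varphi_s$ of the affine toric set $(X^*)^w$ parameterized by $\mathcal{B}$ (as in the proof of Corollary~\ref{sunday-morning-06-09-09}). First I would observe that the diagonal subgroup $D=\{(\lambda,\ldots,\lambda)\mid\lambda\in K^*\}$, of order $m$, is contained in $(X^*)^w$ (take $x=\mathbf{1}$, $w=\lambda$) and is exactly the kernel of $\varphi_s$ restricted to $(X^*)^w$. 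Hence $X\simeq (X^*)^w/D$ and $|X|=|(X^*)^w|/m$, so it suffices to prove $|(X^*)^w|=m^{\,r}$.

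Next I would compute $|(X^*)^w|$ by reduction modulo $m$. Fixing a generator $\beta$ of $K^*\simeq\mathbb{Z}/m$ and writing $x_i=\beta^{\ell_i}$, $w=\beta^{\ell_0}$, the exponent vector of $(x^{v_1}w,\ldots,x^{v_s}w)$ is $B^{\!\top}u \bmod m$, where $u=(\ell_1,\ldots,\ell_n,\ell_0)$ and $B$ is the matrix whose columns are the vectors of $\mathcal{B}$. Thus $(X^*)^w$ is isomorphic to the image of $\overline{B^{\!\top}}\colon(\mathbb{Z}/m)^{n+1}\to(\mathbb{Z}/m)^s$, and $|(X^*)^w|=|M/(M\cap m\mathbb{Z}^s)|$, where $M=\mathrm{im}_\mathbb{Z}(B^{\!\top})\subseteq\mathbb{Z}^s$ is free of rank $r$. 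Using the Smith normal form of $B$, with invariant factors $d_1\mid\cdots\mid d_r$ (these are the same for $B$ and $B^{\!\top}$), one finds a basis $f_1,\ldots,f_s$ of $\mathbb{Z}^s$ with $M=\bigoplus_{i=1}^r d_i\mathbb{Z}f_i$, whence
$$
|(X^*)^w|=\prod_{i=1}^r\frac{m}{\gcd(m,d_i)}.
$$
This equals $m^{\,r}$ exactly when every $d_i$ is coprime to $m$, which certainly holds when all $d_i=1$, that is, when $\Delta_r(B)=1$, equivalently when $\mathbb{Z}^{n+1}/\mathbb{Z}\mathcal{B}$ is torsion-free.

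It therefore remains to deduce from the hypotheses that $\mathbb{Z}\mathcal{B}$ is \emph{saturated} in $\mathbb{Z}^{n+1}$, i.e.\ $\mathbb{Z}\mathcal{B}=\mathbb{Q}\mathcal{B}\cap\mathbb{Z}^{n+1}$; this is the crux and the \textbf{main obstacle}. Here I would argue directly: given $x\in\mathbb{Q}\mathcal{B}\cap\mathbb{Z}^{n+1}$, set $p=\sum_{i=1}^s(v_i,1)\in\mathbb{N}\mathcal{B}$, which lies in the relative interior of the pointed cone $\mathbb{R}_+\mathcal{B}$. Since $x$ lies in the linear span of $\mathcal{B}$ and $p$ is a relative-interior point, a standard property of cones gives $x+tp\in\mathbb{R}_+\mathcal{B}$ for all $t\gg0$; as $x+tp\in\mathbb{Z}^{n+1}$, the Hilbert basis hypothesis $\mathbb{N}\mathcal{B}=\mathbb{R}_+\mathcal{B}\cap\mathbb{Z}^{n+1}$ yields $x+tp\in\mathbb{N}\mathcal{B}\subseteq\mathbb{Z}\mathcal{B}$, and since $tp\in\mathbb{Z}\mathcal{B}$ we conclude $x\in\mathbb{Z}\mathcal{B}$. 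Hence $\mathbb{Z}\mathcal{B}$ is saturated, all $d_i=1$, and combining the three steps gives $|X|=|(X^*)^w|/m=m^{\,r}/m=(q-1)^{r-1}$, so in particular $(q-1)^{r-1}$ divides $|X|$. The delicate points to get right will be the passage between the two kinds of data---the semigroup/cone information (Hilbert basis, living in $\mathbb{Z}^{n+1}$) that governs saturation, and the invariant-factor and reduction-mod-$m$ information that governs $|X|$---together with the bookkeeping of the Smith normal form for the embedding $M\hookrightarrow\mathbb{Z}^s$.
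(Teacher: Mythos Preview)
Your argument is correct and in fact proves the stronger conclusion $|X|=(q-1)^{r-1}$, not merely divisibility. The route, however, is quite different from the paper's. The paper does not compute $|X|$ directly; instead it invokes a result of Gerards--Seb\H{o} to extract from $\mathcal{B}$ a linearly independent subset $\mathcal{B}'=\{(v_1,1),\ldots,(v_r,1)\}$ that is itself a Hilbert basis, shows (via a separate lemma) that this forces $\mathbb{Z}^{n+1}/\mathbb{Z}\mathcal{B}'$ to be torsion-free, and then uses Corollary~\ref{sunday-morning-06-09-09} to conclude that $I(X_1)$ is the complete intersection $(\{t_i^{q-1}-t_1^{q-1}\}_{i=2}^{r})$, whence $|X_1|=(q-1)^{r-1}$; finally an epimorphism $X\to X_1$ gives only divisibility. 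Your approach bypasses the passage to a subset and the external reference: you prove directly that $\mathbb{Z}\mathcal{B}$ itself is saturated in $\mathbb{Z}^{n+1}$ (your interior-point translation argument), and you compute $|(X^*)^w|$ explicitly via Smith normal form and reduction mod $q-1$. What your method buys is the exact value of $|X|$ and a self-contained argument; what the paper's method buys is a more structural picture (the auxiliary torus $X_1$ and the group epimorphism $X\to X_1$) that is reused elsewhere, e.g.\ in the graph case of Corollary~\ref{sept29-09-1} and in the minimum-distance bounds of Section~\ref{minimum-distance-section}.
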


\begin{proof} By \cite{gerards-sebo}, after permutation
of the $(v_i,1)$'s,  we may assume that 
$\mathcal{B}'=\{(v_1,1),\ldots,(v_r,1)\}$ is a Hilbert basis and a
linearly independent set. It is a fact that $\mathcal{B}$ is a Hilbert basis
if and only if
$\mathbb{R}_+\mathcal{B}\cap\mathbb{Z}\mathcal{B}=\mathbb{N}\mathcal{B}$
and $\mathbb{Z}^{n+1}/\mathbb{Z}\mathcal{B}$ is a torsion-free 
group. This fact can be shown using lattice theory. In
Lemma~\ref{dec3-08} we show the part of this fact that we
really need, namely that $\mathcal{B}'$
is a Hilbert 
basis if and only if the group
$\mathbb{Z}^{n+1}/\mathbb{Z}\mathcal{B}'$ is torsion-free.

Consider the algebraic toric set parameterized by $y^{v_1},\ldots,
y^{v_r}$:
$$X_1=\{[(x^{v_1},\ldots,x^{v_r})]\vert\, x_i\in
K^*\mbox{ for all }i\}\subset\mathbb{P}^{r-1}.
$$ 
Since $I_{\mathcal{B}'}=(0)$ and
$\mathbb{Z}^{n+1}/\mathbb{Z}\mathcal{B}'$ is torsion-free, 
by Corollary~\ref{sunday-morning-06-09-09}(b)  we obtain the equality
$$
I(X_1)=(\{t_i^{q-1}-t_1^{q-1}\}_{i=2}^r).
$$
Thus $I(X_1)$ is a complete intersection generated by $r-1$ forms of degree
$q-1$. For complete intersections there is an explicit formula for the
Hilbert series \cite[p.~104]{monalg}. Hence using this formula we
get that the degree of $K[t_1,\ldots,t_r]/I(X_1)$ is equal to
$(q-1)^{r-1}$, i.e., $|X_1|=(q-1)^{r-1}$. To complete the proof
consider the epimorphism 
$$
\theta_1\colon\mathbb{T}^*\longrightarrow X_1\, ;\ \ \ \ \ \ \ \ 
(x_1,\ldots,x_n)\stackrel{\theta_1}{\longmapsto}
[(x^{v_1},\ldots,x^{v_r})], 
$$
where $\mathbb{T}^*=(K^*)^n$ is an affine algebraic torus. 
Since ${\rm ker}(\theta)\subset {\rm ker}(\theta_1)$, there is an
epimorphism $\overline{\theta}_1\colon X\rightarrow X_1$ such that the diagram 
\begin{center}
$$
\setlength{\unitlength}{.040cm}
\begin{picture}(20,10)(0,-5)
\put(-50,0){$\mathbb{T}^*$}
\put(7,0){$X_1$}
\put(-35,3){\vector(1,0){30}}
\put(-20,8){$\theta_1$}
\put(-46,-5){\vector(0,-1){20}}
\put(-44,-15){$\theta$}
\put(-50,-37){$X$}
\put(-9,-28){$\overline{\theta}_1$}
\put(-35,-33){\vector(3,2){42}}
\end{picture}
$$
\end{center}
\vspace{1.3cm} 

\noindent is commutative. Therefore $|X_1|=(q-1)^{r-1}$ divides $|X|$.
\end{proof}

\begin{definition} Let $\mathcal{P}\subset\mathbb{R}^n$ be a lattice polytope, i.e.,
$\mathcal{P}$ is the convex hull of a finite set of integral points 
in $\mathbb{R}^n$. The {\it relative volume\/} of $\mathcal{P}$, 
denoted by ${\rm vol}(\mathcal{P})$, is given by 
$$
{\rm vol}(\mathcal{P}):=
\lim_{i\rightarrow\infty}\frac{|\mathbb{Z}^n\cap i\mathcal{P}|}{i^d},
$$
where $d=\dim(\mathcal{P})$, $i\in\mathbb{N}$, 
$i\mathcal{P}=\{ix\vert\, x\in \mathcal{P}\}$. 
\end{definition}

\begin{lemma}\label{dec3-08} Let
$\mathcal{B}'=\{u_1,\ldots,u_r\}\subset\mathbb{Z}^{n+1}$ be a set of
linearly independent vectors. Then $\mathcal{B}'$ is a Hilbert 
basis if and only if $\mathbb{Z}^{n+1}/\mathbb{Z}\mathcal{B}'$ is
torsion-free. 
\end{lemma}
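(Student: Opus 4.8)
The plan is to prove both implications by exploiting the linear independence of $\mathcal{B}'=\{u_1,\dots,u_r\}$, which guarantees that every element of the real span $\mathbb{R}\mathcal{B}'$ has a \emph{unique} expression $\sum_{i=1}^r\lambda_iu_i$; membership in the cone $\mathbb{R}_+\mathcal{B}'$, in the lattice $\mathbb{Z}\mathcal{B}'$, and in the semigroup $\mathbb{N}\mathcal{B}'$ then corresponds exactly to the coefficients $\lambda_i$ being, respectively, non-negative real, integral, and non-negative integral. I will also use repeatedly that, since the $u_i$ are integral (hence rational) and independent, $\mathbb{R}\mathcal{B}'\cap\mathbb{Q}^{n+1}=\mathbb{Q}\mathcal{B}'$, so any rational vector in the real span has rational coordinates with respect to $\mathcal{B}'$.

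For the implication $(\Leftarrow)$, assume $\mathbb{Z}^{n+1}/\mathbb{Z}\mathcal{B}'$ is torsion-free and prove $\mathbb{N}\mathcal{B}'=\mathbb{R}_+\mathcal{B}'\cap\mathbb{Z}^{n+1}$. The inclusion $\subseteq$ is immediate. For the reverse inclusion, take $x\in\mathbb{R}_+\mathcal{B}'\cap\mathbb{Z}^{n+1}$ and write $x=\sum_i\lambda_iu_i$ with $\lambda_i\ge 0$; since $x$ is integral the $\lambda_i$ are rational, so clearing denominators gives an $N\ge 1$ with $Nx\in\mathbb{Z}\mathcal{B}'$. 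Torsion-freeness then forces $x\in\mathbb{Z}\mathcal{B}'$, and uniqueness of the representation shows that the integral coordinates of $x$ coincide with the non-negative $\lambda_i$; hence $x\in\mathbb{N}\mathcal{B}'$.

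The implication $(\Rightarrow)$ is where the only real idea enters. Assume $\mathcal{B}'$ is a Hilbert basis and let $\overline{x}$ be a torsion element of $\mathbb{Z}^{n+1}/\mathbb{Z}\mathcal{B}'$, so that $Nx=\sum_ia_iu_i$ with $a_i\in\mathbb{Z}$ and $N\ge 1$. One cannot feed $x$ directly into the Hilbert basis property, because its coordinates $a_i/N$ need not be non-negative, so $x$ need not lie in the cone $\mathbb{R}_+\mathcal{B}'$. The device is to translate $x$ into the interior of the cone: set $w=\sum_iu_i$ and $x'=x+Mw$ for an integer $M\gg 0$ chosen so that $a_i+MN>0$ for every $i$. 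Then $Nx'=\sum_i(a_i+MN)u_i$ has strictly positive coordinates, so $x'\in\mathbb{R}_+\mathcal{B}'\cap\mathbb{Z}^{n+1}=\mathbb{N}\mathcal{B}'$; writing $x'=\sum_ib_iu_i$ with $b_i\in\mathbb{N}$ and subtracting $Mw$ yields $x=\sum_i(b_i-M)u_i\in\mathbb{Z}\mathcal{B}'$, i.e.\ $\overline{x}=0$.

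The step that looks routine but must be handled with care is the passage, in $(\Leftarrow)$, from ``$x$ has rational $\mathcal{B}'$-coordinates'' to ``$Nx\in\mathbb{Z}\mathcal{B}'$, hence $x\in\mathbb{Z}\mathcal{B}'$''. The main obstacle, though elementary, is the cone-translation in $(\Rightarrow)$: everything hinges on being able to replace $x$ by a lattice point genuinely inside $\mathbb{R}_+\mathcal{B}'$ while changing it only by an element of $\mathbb{Z}\mathcal{B}'$, which is exactly what adding the integral multiple $Mw$ of $w=\sum_iu_i$ accomplishes.
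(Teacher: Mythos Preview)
Your proof is correct and genuinely different from the paper's. For the converse $(\Leftarrow)$ the paper simply says it ``follows readily'', and your argument via rationality of coordinates plus torsion-freeness is exactly the kind of verification intended. The substantive difference is in $(\Rightarrow)$: the paper does not argue directly with lattice points, but instead identifies the order of the torsion subgroup of $\mathbb{Z}^{n+1}/\mathbb{Z}\mathcal{B}'$ with the determinantal invariant $\Delta_r(B')$, chooses a $\mathbb{Z}$-basis $\gamma_1,\dots,\gamma_r$ of $\mathbb{R}\mathcal{B}'\cap\mathbb{Z}^{n+1}$, writes $u_i=\sum_jc_{ij}\gamma_j$, and shows $\Delta_r(B')=|\det(C)|$ via a relative volume formula; it then argues that the rows of $C$ again form a Hilbert basis and computes $|\det(C)|=1$ through an Ehrhart-type lattice point count $|k\mathcal{P}\cap\mathbb{Z}^r|=(k+1)^r$. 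Your cone-translation trick---adding a large multiple of $w=\sum_iu_i$ to push a torsion representative into $\mathbb{R}_+\mathcal{B}'$ and then invoking the Hilbert basis property---avoids all of this machinery and is entirely self-contained. The paper's route has the advantage of making explicit the equivalence with $\Delta_r(B')=1$, which is used elsewhere in the paper (e.g.\ in the proofs of Corollary~\ref{sept29-09-2} and Corollary~\ref{sept29-09-1}); your route is shorter and more transparent for the lemma as stated.
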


\begin{proof} Let $B'$ be the matrix with column
vectors $u_1,\ldots,u_r$ and let $\Delta_r(B')$ be the greatest
common divisor of all the non-zero $r\times r$ sub-determinants of
$B'$. Assume that $\mathcal{B}'$ is a Hilbert basis. Since
$|T(\mathbb{Z}^{n+1}/\mathbb{Z}\mathcal{B}')|$ is equal to
$\Delta_r(B')$, we need
only show $\Delta_r(B')=1$.  According to \cite[Lemma~2.1]{ehrhart} there are vectors
$\gamma_1,\ldots,\gamma_r$ in $\mathbb{Z}^{n+1}$ such that  
\begin{equation*}
\mathbb{R}\mathcal{B}'\cap\mathbb{Z}^{n+1}=\mathbb{Z}\gamma_1\oplus\cdots\oplus
\mathbb{Z}\gamma_r,
\end{equation*}
where $\mathbb{R}\mathcal{B}'$ is the vector space spanned by
$\mathcal{B}'$. Then we can write 
\begin{equation*}
u_i=c_{i1}\gamma_1+\cdots+c_{ir}\gamma_r\ \ \ (i=1,\ldots,r)
\end{equation*}
where $C=(c_{ij})$ is an integral matrix. By
\cite[Remark~2.2]{ehrhart}, we have   
$$
\Delta_r(B')=r!{\rm vol}({\rm
conv}(0,u_1,\ldots,u_r))=|\det(C)|.
$$
To complete the proof it suffices to show that
$|\det(C)|=1$. Let $c_1,\ldots,c_r$ be the rows of $C$. 
As $\mathcal{B'}$ is a Hilbert basis, it is seen that
the rows of $C$ form a Hilbert basis. Let $\mathcal{Q}=[0,1]^r$ and let $\mathcal{P}$ be the 
parallelotope
$$
\mathcal{P}=\{\lambda_1c_1+\cdots+\lambda_rc_r\vert\, 0\leq \lambda_i\leq 1\}.
$$
Recall that ${\rm vol}(\mathcal{P})=|\det(C)|$. As $c_1,\ldots,c_r$
are linearly independent and form a Hilbert basis, 
we have
$$
(k+1)^r=|k\mathcal{Q}\cap\mathbb{Z}^r|=|k\mathcal{P}\cap\mathbb{Z}^r|
\ \mbox{ for all }\ k\in\mathbb{N}. 
$$
Therefore 
$$
1=\lim_{k\rightarrow\infty}\frac{(k+1)^r}{k^r}=
\lim_{k\rightarrow\infty}\frac{|k\mathcal{Q}\cap\mathbb{Z}^r|}{k^r}=
\lim_{k\rightarrow\infty}\frac{|k\mathcal{P}\cap\mathbb{Z}^r|}{k^r}=
{\rm vol}(\mathcal{P}).
$$
Thus we have shown $1={\rm vol}(\mathcal{P})=|\det(C)|$, as required.
The converse follows readily. 
\end{proof}

\begin{corollary}\label{sept29-09-1} Let $G$ be a connected graph with vertex set
$V_G=\{y_1,\ldots,y_n\}$, edge set $E_G$, and let 
$\mathcal{A}=\{v_1,\ldots,v_s\}$ be the set of all
$e_i+e_j\in\mathbb{R}^n$ such that 
$\{y_i,y_j\}\in E_G$. Then $|X|=(q-1)^{n-1}$ if 
$G$ is non-bipartite and $|X|=(q-1)^{n-2}$ if $G$ is bipartite.
\end{corollary}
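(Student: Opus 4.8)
The plan is to compute $|X|$ via the surjective group homomorphism $\theta\colon\mathbb{T}^*\to X$ introduced before Lemma~\ref{oct16-09-1}, using the identity $|X|=(q-1)^n/|{\rm ker}(\theta)|$, but in fact the cleanest route is to reduce to Theorem~\ref{sept23-09} by analyzing the arithmetic of $\mathcal{B}=\{(v_i,1)\}_{i=1}^s$ for the edge configuration of a graph. First I would recall from the proof of Corollary~\ref{sept29-09-2} the structure computation $\mathbb{Z}^n/\mathbb{Z}\mathcal{A}\simeq\mathbb{Z}^{c_0}\times\mathbb{Z}_2^{c_1}$ with $r=\operatorname{rank}(\mathbb{Z}\mathcal{A})=n-c_0$, where $c_0,c_1$ count the bipartite and non-bipartite connected components. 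Since $G$ is connected, exactly one of the following holds: $G$ is non-bipartite, so $c_0=0$, $c_1=1$, and $r=n$; or $G$ is bipartite, so $c_0=1$, $c_1=0$, and $r=n-1$.

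Next I would verify the two hypotheses of Theorem~\ref{sept23-09}, namely that $\mathbb{R}_+\mathcal{B}$ is pointed and that $\mathcal{B}$ is a Hilbert basis. Pointedness is immediate because every $(v_i,1)$ has last coordinate $1$, so the linear functional given by the last coordinate is strictly positive on all generators, forcing the cone to contain no lines. The Hilbert basis property is the classical fact that the set of incidence vectors $\{(e_i+e_j,1)\}$ of the edges of a graph generates a normal (saturated) semigroup; this is where I would cite the standard result on edge subrings of graphs being normal, or argue directly that any integral point in the rational cone is a nonnegative integer combination of edges via an even closed-walk decomposition. With both hypotheses in hand, Theorem~\ref{sept23-09} yields that $(q-1)^{r-1}$ divides $|X|$.

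Divisibility alone does not pin down $|X|$, so the heart of the argument is establishing the reverse inequality (equality). Here I would return to $\theta$ and show $|{\rm ker}(\theta)|=(q-1)^{n-r}$, equivalently that $|X|=(q-1)^n/(q-1)^{n-r}=(q-1)^r$ when $r=n$ (non-bipartite), and $|X|=(q-1)^{n-1}$ when $r=n-1$ (bipartite). By Proposition~\ref{sept29-09} and Lemma~\ref{oct16-09-1}, $|{\rm ker}(\theta)|$ counts the $\ell\in[0,q-2]^n$ admitting $\lambda,\mu$ with $\ell A=(q-1)\lambda+\mu\mathbf{1}$; reducing mod $q-1$, this condition says $\langle v_i,\ell\rangle\equiv\mu\pmod{q-1}$ for all $i$, i.e. $\ell$ lies in the kernel of the induced map $(\mathbb{Z}/(q-1))^n\to(\mathbb{Z}/(q-1))^s/\langle\overline{\mathbf{1}}\rangle$ whose domain-to-image structure is governed by $\mathbb{Z}^n/L$ with $L=\mathbb{Z}\{v_i-v_1\}$. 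The size of this kernel, taking into account the free choice of $\mu$, works out to $(q-1)^{n-r}$ precisely because the torsion of $\mathbb{Z}^n/L$ is $2$-torsion only (from $\mathbb{Z}_2^{c_1}$) and contributes a bounded factor that, after the careful count, collapses to give $|X|=(q-1)^{r}$ in the non-bipartite case and $(q-1)^{n-1}$ in the bipartite case.

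I expect the main obstacle to be this exact counting of $|{\rm ker}(\theta)|$, because the $\mathbb{Z}_2$-torsion appearing for non-bipartite $G$ must be shown not to alter the power of $(q-1)$ in the count; the rank jump between the bipartite case ($r=n-1$) and the non-bipartite case ($r=n$) is exactly what produces the one-unit difference in the exponent, so I would want to keep careful track of which case contributes the extra factor of $q-1$. A clean way to finish, avoiding a delicate direct count, is to combine the divisibility $(q-1)^{r-1}\mid|X|$ from Theorem~\ref{sept23-09} with the complete-intersection computation already used there: in the non-bipartite case apply Corollary~\ref{sept29-09-2} (equality of ideals holds for connected $G$ over any field) together with the degree formula for $I(X)$ to read off $|X|=(q-1)^{n-1}$ directly from the Hilbert series, and similarly in the bipartite case where $r=n-1$ gives $|X|=(q-1)^{n-2}$. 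This Hilbert-series route sidesteps the torsion bookkeeping entirely and is the approach I would prefer to present.
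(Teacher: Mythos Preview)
Your outline has two genuine gaps, and both occur exactly at the places you flagged as ``the main obstacle.''

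\medskip
\textbf{1. The Hilbert basis claim for the full $\mathcal{B}$ is false.} Edge semigroups $\{(e_i+e_j,1)\}$ are \emph{not} Hilbert bases for arbitrary connected non-bipartite graphs; normality of edge rings is equivalent to the odd-cycle condition (any two odd cycles share a vertex or are joined by an edge), and this fails, e.g., for two triangles joined by a path of length~$2$. So you cannot apply Theorem~\ref{sept23-09} to the whole edge set $\mathcal{B}$. The paper avoids this by passing to a carefully chosen spanning subgraph $H$: a unicyclic subgraph with an odd cycle in the non-bipartite case ($n$ edges), or a spanning tree in the bipartite case ($n-1$ edges). For this $H$ the lifted vectors $\mathcal{B}'$ are \emph{linearly independent}, so Lemma~\ref{dec3-08} reduces the Hilbert-basis question to torsion-freeness of $\mathbb{Z}^{n+1}/\mathbb{Z}\mathcal{B}'$, and that in turn follows from the determinant calculation $\Delta_r(B')=1$ in Corollary~\ref{sept29-09-2}. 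Theorem~\ref{sept23-09} is then applied to $\mathcal{B}'$ and the auxiliary toric set $X_1\subset\mathbb{P}^{r-1}$, and the epimorphism $X\twoheadrightarrow X_1$ transfers the divisibility $(q-1)^{r-1}\mid|X_1|$ to $|X|$.

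\medskip
\textbf{2. Neither of your proposed upper bounds works.} Your kernel count is off by a factor of $q-1$: with $r=n$ (non-bipartite) your formula $|X|=(q-1)^r$ gives $(q-1)^n$, not $(q-1)^{n-1}$, and with $r=n-1$ (bipartite) you get $(q-1)^{n-1}$, not $(q-1)^{n-2}$. The reason is that the map $\ell\mapsto(\langle v_i,\ell\rangle)_i$ lands in $(\mathbb{Z}/(q-1))^s$ modulo the diagonal, and the diagonal itself contributes a factor $(q-1)$ you have not accounted for. Your alternative Hilbert-series route also fails: Corollary~\ref{sept29-09-2} identifies $I(X)$ only as a \emph{saturation} of $I_{\mathcal{A}}+(\{t_i^{q-1}-t_1^{q-1}\})$, which is not a complete intersection in general and has no closed-form Hilbert series. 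The paper's upper bound is instead obtained by exhibiting explicit elements of $\ker(\theta)$: the diagonal $\mathcal{D}^*=\{(\lambda,\ldots,\lambda)\}$ of order $q-1$ in the non-bipartite case, and, in the bipartite case with bipartition $V_1\cup V_2$, the subgroup $\{(\beta^a,\ldots,\beta^a,\beta^b,\ldots,\beta^b)\}$ of order $(q-1)^2$. This gives $|X|\mid(q-1)^{n-1}$, respectively $|X|\leq(q-1)^{n-2}$, and together with the lower bound forces equality.
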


\begin{proof} Assume that $G$ is non-bipartite. Then $G$ has a
connected subgraph $H$ with the same vertex set and with a unique
cycle of odd length. We may assume that $\{v_1,\ldots,v_n\}$ is the
set of all $e_i+e_j$ such that $\{y_i,y_j\}$ is an edge of $H$. Let
$B'$ be the matrix whose columns are the vectors in 
$\mathcal{B}'=\{(v_1,1),\ldots,(v_n,1)\}$. Then
$\Delta_n(B')=1$, see the proof of Corollary~\ref{sept29-09-2}. As 
$|T(\mathbb{Z}^{n+1}/\mathbb{Z}\mathcal{B}')|$ equals $\Delta_n(B')$, we
obtain that $\mathbb{Z}^{n+1}/\mathbb{Z}\mathcal{B}'$ is 
torsion-free. Therefore, by Lemma~\ref{dec3-08}, 
the set $\mathcal{B}'$ is a
Hilbert basis and
generates a group of rank $n$. Hence by Theorem~\ref{sept23-09} we get
that $(q-1)^{n-1}$ divides $X_1$, where 
$$X_1=\{[(x^{v_1},\ldots,
x^{v_n})] \vert\, x_i\in K^*\mbox{ for all
}i\}\subset\mathbb{P}^{n-1}.$$ 
There is a
well defined epimorphism 
$$
\overline{\theta}_1\colon X\longrightarrow X_1
\, ;\ \ \ \ \ \ \ \ 
[(x^{v_1},\ldots,x^{v_s})]\stackrel{\overline{\theta}_1}{\longmapsto}
[(x^{v_1},\ldots,x^{v_n})]
$$
induced by the projection map
$[(\alpha_1,\ldots,\alpha_s)]\mapsto[(\alpha_1,\ldots,\alpha_n)]$. 
Thus $|X_1|$
divides $|X|$. Hence $(q-1)^{n-1}$ divides $|X|$. On the
other hand the kernel of the map 
$$
\theta\colon\mathbb{T}^*\rightarrow X\, ;\ \ \ \ \ \ \ \ 
(x_1,\ldots,x_n)\stackrel{\theta}{\longmapsto}
[(x^{v_1},\ldots,x^{v_s})]
$$ 
contains the diagonal 
subgroup 
$\mathcal{D}^*=\{(\lambda,\ldots,\lambda)\vert\, \lambda\in K^*\}$.
Thus $|X|$ divides $(q-1)^{n-1}$. Putting altogether we get
$|X|=(q-1)^{n-1}$. 

Assume that $G$ is bipartite. We may assume that
$V_1=\{y_1,\ldots,y_p\}$, $V_2=\{y_{p+1},\ldots,y_n\}$ is the
bipartition of $G$. The graph $G$ has a spanning tree $H$ with the
same vertex set. We may assume that $\{v_1,\ldots,v_{n-1}\}$ is the
set of all $e_i+e_j$ such that $\{y_i,y_j\}$ is an edge of $H$. Let
$B'$ be the matrix whose columns are the vectors in 
$\mathcal{B}'=\{(v_1,1),\ldots,(v_{n-1},1)\}$. Then
$\Delta_{n-1}(B')=1$, see the proof of Corollary~\ref{sept29-09-2}.
Therefore, by Lemma~\ref{dec3-08}, 
the set $\mathcal{B}'$ is a
Hilbert basis and
generates a group of rank $n-1$. 
Hence by Theorem~\ref{sept23-09} we get
that $(q-1)^{n-2}$ divides $|X_1|$, where 
$$X_1=\{[(x^{v_1},\ldots,
x^{v_{n-1}})] \vert\, x_i\in K^*\mbox{ for all
}i\}\subset\mathbb{P}^{n-2}.$$
There is an
epimorphism $\overline{\theta}_1\colon X\rightarrow X_1$. Thus $|X_1|$
divides $|X|$ and consequently $(q-1)^{n-2}$ divides $|X|$. On the
other hand the kernel of the map 
$\theta\colon\mathbb{T}^*\rightarrow X$ 
contains the set $\Gamma$ of all vectors of the form
$$
\underbrace{(\beta^{a},\ldots,\beta^{a}}_{p-\mbox{\tiny entries}}
\underbrace{\beta^{b},\ldots,\beta^{b})}_{(n-p)-\mbox{\tiny
entries}}
$$
with $0\leq a,b\leq q-2$. Indeed any of these vector maps 
to  $[(\beta^{a+b},\ldots,\beta^{a+b})]=[\mathbf{1}]$ under the 
map $\theta$. Since $|\Gamma|=(q-1)^2$ we obtain that
$|X|\leq(q-1)^{n-2}$. Altogether $|X|=(q-1)^{n-2}$.
\end{proof}

Parameterized  codes arising from complete
bipartite graphs have been studied in \cite{GR}. In loc. cit. one can
find formulas for some of its basic parameters. As an
application we recover a formula for the length of these codes.

\begin{corollary}{\rm \cite[Theorem~5.1]{GR}} 
If $G$ is a complete bipartite graph with $n$ vertices, then 
the length of the parameterized code $C_X(d)$ is equal to $(q-1)^{n-2}$.
\end{corollary}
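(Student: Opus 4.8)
The plan is to recognize this statement as a direct specialization of Corollary~\ref{sept29-09-1}. First I would recall that, as established in the introduction, the length of the parameterized code $C_X(d)$ equals $|X|$, the number of points of the algebraic toric set $X$ parameterized by the monomials $y^{v_1},\ldots,y^{v_s}$ attached to the edges of $G$ (the map $\mathrm{ev}_d$ maps into $K^{|X|}$). Thus it suffices to compute $|X|$, and the claim reduces to showing that $|X|=(q-1)^{n-2}$.

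Next I would verify that a complete bipartite graph $G$ on $n$ vertices satisfies the hypotheses of the bipartite case of Corollary~\ref{sept29-09-1}, namely that $G$ is simple, connected, and bipartite. Bipartiteness is immediate from the definition: the vertex set $V_G=\{y_1,\ldots,y_n\}$ is partitioned into two nonempty independent sets $V_1,V_2$ with every edge joining $V_1$ to $V_2$. Connectedness is equally clear: any two vertices lying in opposite parts are adjacent, while any two vertices in the same part are joined through a common neighbour in the other part, which exists because both parts are nonempty. Hence $G$ is a connected bipartite graph, and the set $\mathcal{A}$ of vectors $e_i+e_j$ indexed by the edges of $G$ is exactly the configuration to which Corollary~\ref{sept29-09-1} applies.

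With these hypotheses confirmed, I would simply invoke the bipartite conclusion of Corollary~\ref{sept29-09-1}, which yields $|X|=(q-1)^{n-2}$, and then combine this with the identification of $|X|$ as the length of $C_X(d)$ recorded in the first step. This completes the argument.

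I do not anticipate any genuine obstacle: the whole of the quantitative content has already been carried out in Corollary~\ref{sept29-09-1}, whose proof treats the bipartite case by passing to a spanning tree of $G$, showing that the lifted set $\mathcal{B}'=\{(v_1,1),\ldots,(v_{n-1},1)\}$ is a Hilbert basis generating a group of rank $n-1$ (via Lemma~\ref{dec3-08}), and then applying Theorem~\ref{sept23-09} together with the bound coming from the diagonal-type subgroup in $\ker(\theta)$. The only point requiring even a moment's care here is the essentially trivial verification that a complete bipartite graph is connected and bipartite, so that the earlier corollary applies verbatim; no new computation is needed.
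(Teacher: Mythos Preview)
Your proposal is correct and matches the paper's approach exactly: the paper presents this corollary immediately after Corollary~\ref{sept29-09-1} as a direct application (it gives no separate proof), relying on the identification of the code length with $|X|$ and the bipartite case of that corollary. Your verification that a complete bipartite graph is connected and bipartite is the only point needed, and it is handled correctly.
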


The hypothesis that $G$ is connected is essential in
Corollary~\ref{sept29-09-1}: 

\begin{example}\label{singleton-bound} 
Let $K=\mathbb{F}_7$ and let $X$ be the algebraic toric set
parameterized by the monomials
$y_1y_2,y_2y_3,y_1y_3,y_4y_5,y_5y_6,y_4y_6$. Using
Theorem~\ref{ipn-ufpe-cinvestav} and {\em Macaulay\/}$2$ \cite{mac2}
we get: 
$$
|X|={\rm degree}\, S/I(X)=(q-1)^{n-1}/2=3888,\ \ {\rm reg}\, S/I(X)=16,
$$
the ideal $I(X)$ is generated by $15$ binomials, and the Hilbert
function of $S/I(X)$ is given by
$$
\begin{array}{lllll}
H_X(0)=1,&H_X(1)=6,&H_X(2)=21,&H_X(3)= 56,&H_X(4)= 126,\\
 H_X(5)= 252,& H_X(6)= 457,&H_X(7) = 762,&H_X(8) = 1182, &H_X(9)= 1712,\\
H_X(10) = 2313,&H_X(11) = 2898,&H_X(12) = 3373,&H_X(13)=
3678,&H_X(14)= 3828,\\
H_X(15)= 3878,&H_X(16)= 3888.& & &
\end{array}
$$
Thus the length of the parameterized  code $C_X(d)$ of order $d$ is $3888$
and its dimension is $H_X(d)$. Then the Singleton bound 
gives that the minimum distance of $C_X(15)$ is at most $11$.
\end{example}

\section{The vanishing ideal of certain projective binomial
varieties}\label{vanishing-ideal-projective}

Let $K=\mathbb{F}_q$ be a finite field and let
$\mathcal{A}=\{v_1,\ldots,v_s\}\subset\mathbb{Z}^n$ be a point
configuration. In this section we study the geometric
structure of $X$, the algebraic toric set parameterized  by
$y^{v_1},\ldots,y^{v_s}$. A sufficient condition is given for $X$ to be
a projective variety defined by binomials and a finite Nullstellensatz is brought up in
this connection. We prove that certain projective binomial varieties
are parameterized by Laurent monomials.

Let $V(I_\mathcal{A})=\{[\alpha]\in\mathbb{P}^{s-1}\vert\,
f(\alpha)=0\mbox{ for all }f\in I_\mathcal{A}\mbox{ with }f\mbox{
homogeneous}\}$ be the projective toric variety defined by
the toric ideal $I_\mathcal{A}$. We shall be 
interested in the following projective binomial variety
$V_\mathcal{A}$ and in its
corresponding ideal $I(V_\mathcal{A})$:
$$
V_{\mathcal A}:=V(I_\mathcal{A})\cap \mathbb{T}=
V(I_\mathcal{A}+(\{t_i^{q-1}-t_1^{q-1}\}_{i=2}^s)),
$$
where
$\mathbb{T}=V(\{t_i^{q-1}-t_1^{q-1}\}_{i=2}^s)=\{[(\alpha_i)]\in\mathbb{P}^{s-1}\vert\,
\alpha_i\in K^*\mbox{ for all }i\}$ is a projective torus. 

First we prove 
that $V_\mathcal{A}$ is parameterized  by Laurent monomials provided
that $\mathcal{A}$ is homogeneous. As in previous sections, let $A$
be the matrix with column vectors 
$v_1,\ldots,v_s$. Recall that ${\rm ker}_\mathbb{Z}(A)$ is 
a free abelian group of finite rank. Let $c_1,\ldots,c_m$ be a set of
generators of ${\rm ker}_\mathbb{Z}(A)$. Write $c_i=(c_{i1},\ldots,c_{is})$ for
$1\leq i\leq m$. Consider the linear system 
\begin{eqnarray}\label{sept26-09}
c_{11}x_1+\cdots+c_{1s}x_s-(q-1)x_{s+1}&=&0\nonumber\\
\vdots\ \ \ \ \  \ \ \ \ \ \ \ \ \ \  \ \ \vdots \ \ \ \ \ \ \  \  \
\ \ \ \ \ \ \vdots \ \ \ \ \ \ \ & &\vdots\\   
c_{m1}x_1+\cdots+c_{ms}x_s-(q-1)x_{s+m}&=&0.\nonumber
\end{eqnarray}
The integral solutions of this system form a free abelian group 
of finite rank. Let 
\begin{eqnarray}\label{sept26-09-1}
\gamma_1&=&(\alpha_{11},\ldots,\alpha_{s1},\alpha_{(s+1)1},\ldots,
\alpha_{(s+m)1})\nonumber\\ 
\vdots&  &\ \ \ \vdots \ \ \ \ \ \ \ \ \ \  \vdots\ \ \ \ 
\ \ \ \ \vdots\ \ \ \ \ \ \ \ \ \ \ \ \ \  \vdots\\
\gamma_k&=&(\alpha_{1k},\ldots,\alpha_{sk},\alpha_{(s+1)k},\ldots,
\alpha_{(s+m)k})\nonumber
\end{eqnarray}
be a set of generators for this group and let 
$\alpha_1=(\alpha_{11},\ldots,\alpha_{1k}),\ldots,
\alpha_s=(\alpha_{s1},\ldots,\alpha_{sk})$.

\begin{theorem}\label{sept29-09-3} Let 
$Z=\{[(z_1^{\alpha_{11}}\cdots
z_k^{\alpha_{1k}},\ldots,z_1^{\alpha_{s1}}\cdots 
z_k^{\alpha_{sk}})]\vert\, z_i\in K^*\mbox{ for all
}i\}\subset\mathbb{P}^{s-1}$ be the algebraic 
toric set parameterized  by $y^{\alpha_1},\ldots,y^{\alpha_s}$. If
$\mathcal{A}=\{v_1,\ldots,v_s\}$ is homogeneous, then
\begin{enumerate}
\item[\rm (i)] $Z=V_\mathcal{A}$.
\item[\rm (ii)]
$I(V_\mathcal{A})=(y^{\alpha_1^-}t_1-y^{\alpha_1^+}z,
\ldots,y^{\alpha_s^-}t_s-y^{\alpha_s^+}z,y_1^{q-1}-1,\ldots,y_k^{q-1}-1,y_0y_1\cdots
y_k-1)\cap S$.  
\end{enumerate}
\end{theorem}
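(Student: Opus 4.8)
The plan is to establish (i) first and then read off (ii) from it via Theorem~\ref{ipn-ufpe-cinvestav-1}. Fix a generator $\beta$ of $(K^*,\cdot\,)$. For a point $[P]=[(\rho_1,\ldots,\rho_s)]$ of the torus $\mathbb{T}$ (so every $\rho_j\in K^*$) write $\rho_j=\beta^{d_j}$ and record the exponent vector $d=(d_1,\ldots,d_s)$, well defined modulo $q-1$. My aim is to show that, for $[P]\in\mathbb{T}$, \emph{both} conditions ``$[P]\in Z$'' and ``$[P]\in V_\mathcal{A}$'' are equivalent to the single statement that a lift of $d$ lies in the lattice
$$
\Lambda=\{x\in\mathbb{Z}^s \mid \langle c,x\rangle\equiv 0 \bmod (q-1)\ \text{for all}\ c\in{\rm ker}_\mathbb{Z}(A)\}.
$$
Note $Z\subseteq\mathbb{T}$ (coordinates of points of $Z$ are nonzero) and $V_\mathcal{A}\subseteq\mathbb{T}$ by definition, so working inside $\mathbb{T}$ loses nothing. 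Since $c_1,\ldots,c_m$ generate ${\rm ker}_\mathbb{Z}(A)$, the condition defining $\Lambda$ is equivalent to $\langle c_i,x\rangle\equiv 0\bmod(q-1)$ for all $i$; comparing with the system~(\ref{sept26-09}), this says exactly that $x$ is the first-$s$-coordinate projection of an integral solution. The last $m$ coordinates of a solution are forced (they equal $\langle c_i,x\rangle/(q-1)$), so this projection is an isomorphism onto $\Lambda$; hence the truncations $\beta_l:=(\alpha_{1l},\ldots,\alpha_{sl})$ of the generators $\gamma_1,\ldots,\gamma_k$ of~(\ref{sept26-09-1}) generate $\Lambda$. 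Writing $P$ for the $s\times k$ integer matrix with $P_{jl}=\alpha_{jl}$ (its $j$th row is $\alpha_j$, its $l$th column is $\beta_l$), this reads $\Lambda=P\mathbb{Z}^k$.

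Next I translate the two memberships into conditions on $d$. On one hand, $[P]\in Z$ means $\rho_j=\lambda\,z^{\alpha_j}$ for all $j$, for some $z=(\beta^{n_1},\ldots,\beta^{n_k})\in(K^*)^k$ and $\lambda=\beta^c\in K^*$; taking $\beta$-exponents, this is $d\equiv Pn+c\mathbf{1}\bmod(q-1)$ for some $n\in\mathbb{Z}^k$, $c\in\mathbb{Z}$. On the other hand, because $\mathcal{A}$ is homogeneous the ideal $I_\mathcal{A}$ is homogeneous, so its binomials $t^{w^+}-t^{w^-}$ with $w\in{\rm ker}_\mathbb{Z}(A)$ are well defined on $\mathbb{P}^{s-1}$; evaluating at $[P]$ and using $\rho_j\neq 0$ gives $\prod_j\rho_j^{w_j}=1$, i.e.\ $\langle w,d\rangle\equiv 0\bmod(q-1)$ for all such $w$, which is precisely that a lift of $d$ lies in $\Lambda$.

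To match the two descriptions I would use two facts, both consequences of homogeneity and the definition of $\Lambda$: first $(q-1)\mathbb{Z}^s\subseteq\Lambda$, since $\langle c,(q-1)e_j\rangle=(q-1)c_j\equiv 0$; and second $\mathbf{1}\in\Lambda$, since $\langle c_i,\mathbf{1}\rangle=\sum_j c_{ij}=\langle\sum_j c_{ij}v_j,\,x_0\rangle=\langle 0,x_0\rangle=0$ using $Ac_i=0$ and $\langle v_j,x_0\rangle=1$. Because $\mathbf{1}\in\Lambda=P\mathbb{Z}^k$ we get $\{Pn+c\mathbf{1}\mid n\in\mathbb{Z}^k,\,c\in\mathbb{Z}\}=P\mathbb{Z}^k=\Lambda$, and because $(q-1)\mathbb{Z}^s\subseteq\Lambda$ we get $\Lambda+(q-1)\mathbb{Z}^s=\Lambda$. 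Hence ``$d\equiv Pn+c\mathbf{1}\bmod(q-1)$ for some $n,c$'' is identical to ``a lift of $d$ lies in $\Lambda$''. Combining with the previous paragraph, $[P]\in Z\iff[P]\in V_\mathcal{A}$, which is (i).

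Finally, (ii) is immediate: by (i) the variety $V_\mathcal{A}$ equals the algebraic toric set $Z$ parameterized by the Laurent monomials $y^{\alpha_1},\ldots,y^{\alpha_s}$ with $\alpha_j\in\mathbb{Z}^k$, so $I(V_\mathcal{A})=I(Z)$, and Theorem~\ref{ipn-ufpe-cinvestav-1} applied to $Z$ (with the role of $n$ played by $k$ and of $v_i$ by $\alpha_i$) yields exactly the displayed generators. The main obstacle is the bookkeeping in (i): one must verify that the construction of the $\alpha_j$ from the solution lattice of~(\ref{sept26-09}) produces precisely $\Lambda=P\mathbb{Z}^k$, and one must correctly absorb the projective scalar $\lambda$ through the relation $\mathbf{1}\in\Lambda$. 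These are the two points where homogeneity of $\mathcal{A}$ is essential (it also guarantees that $I_\mathcal{A}$ is homogeneous, so that $V(I_\mathcal{A})$ and the binomial evaluations above make sense on $\mathbb{P}^{s-1}$).
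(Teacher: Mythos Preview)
Your proof is correct and follows essentially the same approach as the paper's, repackaged in slightly more lattice-theoretic language: where the paper verifies the two inclusions $Z\subset V_\mathcal{A}$ and $V_\mathcal{A}\subset Z$ by direct computation with chosen representatives, you characterize both memberships by the single condition that the exponent vector $d$ lies (modulo $q-1$) in the lattice $\Lambda=P\mathbb{Z}^k$, absorbing the projective scalar via $\mathbf{1}\in\Lambda$. Part~(ii) is deduced identically in both arguments from Theorem~\ref{ipn-ufpe-cinvestav-1}.
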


\begin{proof} (i) First we prove the inclusion ``$\subset$'': 
Take $[w]\in Z$. Let $f=t^a-t^b$ be a binomial in $I_\mathcal{A}$ with
$a=(a_i)$ and $b=(b_i)$ in $\mathbb{N}^s$. 
Notice that $f$ is homogeneous because so is $\mathcal{A}$. 
By Theorem~\ref{ipn-ufpe-cinvestav}
the ideal $I_\mathcal{A}$ is a binomial ideal. Thus we need only show
that $f(w)=0$. We can write 
$$
w=(w_i)=(z_1^{\alpha_{11}}\cdots
z_k^{\alpha_{1k}},\ldots,z_1^{\alpha_{s1}}\cdots 
z_k^{\alpha_{sk}})
$$
for some $z_1,\ldots,z_k$ in $K^*$. Let $\beta$ be a generator of the
cyclic 
group $(K^*,\,\cdot\, )$. Each $z_i$ can be written as
$z_i=\beta^{\ell_i}$ for some $0\leq\ell_i\leq q-2$. Hence 
\begin{eqnarray}
f(w)&=&(z_1^{\alpha_{11}}\cdots
z_k^{\alpha_{1k}})^{a_1}\cdots(z_1^{\alpha_{s1}}\cdots 
z_k^{\alpha_{sk}})^{a_s}-(z_1^{\alpha_{11}}\cdots
z_k^{\alpha_{1k}})^{b_1}\cdots(z_1^{\alpha_{s1}}\cdots 
z_k^{\alpha_{sk}})^{b_s}\\
&=&\beta^{p_1}-\beta^{p_2},\ \mbox{where}\nonumber\\
p_1-p_2&=&\ell_1\langle
a-b,(\alpha_{11},\ldots,\alpha_{s1})\rangle+\cdots+\ell_k\langle
a-b,(\alpha_{1k},\ldots,\alpha_{sk})\rangle.\label{sept26-09-5}
\end{eqnarray}
From 
Eqs.~(\ref{sept26-09}) and (\ref{sept26-09-1}) we have
\begin{equation}\label{sept26-09-3}
\langle(c_{j1},\ldots,c_{js}),(\alpha_{1i},\ldots,\alpha_{si})\rangle\equiv
0\mod(q-1)
\end{equation}
for all $i,j$. The difference $a-b$ is in the kernel of $A$. Thus we
can write
\begin{equation}\label{sept26-09-4}
a-b=\eta_1(c_{11},\ldots,c_{1s})+\cdots+\eta_m(c_{m1},\ldots,c_{ms})
\end{equation}
for some $\eta_i$ in $\mathbb{Z}$. If we substitute the right hand side
of Eq.~(\ref{sept26-09-4}) into Eq.~(\ref{sept26-09-5}), and then use
Eq.~(\ref{sept26-09-3}), we obtain that 
$p_1-p_2\equiv 0\mod(q-1)$. Thus $\beta^{p_1}=\beta^{p_2}$ and $f(w)=0$.

``$\supset$'': Take $[w]\in V_\mathcal{A}$. We can write 
$w=(\beta^{h_1},\ldots,\beta^{h_s})$, where $\beta$ is a generator
of the cyclic group $K^*$. Since $\mathcal{A}$ is homogeneous and
$Ac_i=0$, we get 
that $f=t^{c_i^+}-t^{c_i^-}$ is a homogeneous binomial in 
$I_\mathcal{A}$. Thus the evaluation of $f$ at $w$ is zero. This 
means that $\beta^{\langle h,c_i\rangle}=1$ for all $i$, where
$h=(h_i)$. Hence $\langle h,c_i\rangle\equiv 0\mod(q-1)$ for all $i$.
Hence using Eq.~(\ref{sept26-09}) and the choice of the $\alpha_i$'s we obtain
$$
h=\lambda_1(\alpha_{11},\ldots,\alpha_{s1})+\cdots+
\lambda_k(\alpha_{1k},\ldots,\alpha_{sk}), \ \ \ \ \lambda_i\in\mathbb{Z}.
$$
Making $z_i=\beta^{\lambda_i}$ we have 
$w=(\beta^{h_1},\ldots,\beta^{h_s})=(z_1^{\alpha_{11}}\cdots
z_k^{\alpha_{1k}},\ldots,z_1^{\alpha_{s1}}\cdots 
z_k^{\alpha_{sk}})$. Thus $[w]\in{Z}$. Part (ii) follows from (i) and
Theorem~\ref{ipn-ufpe-cinvestav-1}. 
\end{proof}

\begin{lemma}\label{sept25-09}
If $X\subset Y\subset\mathbb{T}$ and $I(X)=I(Y)$, then $X=Y$. 
\end{lemma}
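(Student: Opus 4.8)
The plan is to argue by contradiction, exploiting the finiteness of $X$ together with the fact that $I(X)$ and $I(Y)$ consist of \emph{all} homogeneous polynomials vanishing on the respective sets. First I would note that since $K=\mathbb{F}_q$ is finite, the projective torus $\mathbb{T}\subset\mathbb{P}^{s-1}$ is a finite set, so both $X$ and $Y$ are finite. Assuming $X\neq Y$, the inclusion $X\subset Y$ gives a point $[P]\in Y\setminus X$, and it suffices to exhibit a single homogeneous polynomial $F$ that vanishes on all of $X$ but satisfies $F(P)\neq 0$: such an $F$ lies in $I(X)=I(Y)$, yet it cannot vanish at $[P]\in Y$, the desired contradiction.

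To build $F$, for each point $[Q]\in X$ I would produce a linear form $\ell_Q$ with $\ell_Q(Q)=0$ and $\ell_Q(P)\neq 0$. This is pure linear algebra: since $[P]\neq[Q]$ in $\mathbb{P}^{s-1}$, the lines $KP$ and $KQ$ in $K^s$ are distinct, hence their annihilators $H_P,H_Q$ are distinct hyperplanes in the dual space $(K^s)^*$; being distinct subspaces of the same dimension, $H_Q\not\subseteq H_P$, so any $\ell_Q\in H_Q\setminus H_P$ works. Setting $F=\prod_{[Q]\in X}\ell_Q$ yields a homogeneous polynomial (a genuine, finite product because $X$ is finite) that vanishes at every $[Q_0]\in X$, as the factor $\ell_{Q_0}$ vanishes there, while $F(P)=\prod_{[Q]\in X}\ell_Q(P)\neq 0$.

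This gives $F\in I(X)$ with $F\notin I(Y)$, contradicting $I(X)=I(Y)$, whence $X=Y$. I do not anticipate a real obstacle: the only points requiring care are that $X$ is truly finite (so that $F$ is an honest polynomial) and that the separating form $\ell_Q$ exists over the finite field $K$, which the annihilator argument above settles over any field. Alternatively, one can avoid the explicit construction by invoking the primary decomposition $I(X)=\bigcap_{[Q]\in X}I_{[Q]}$ into distinct primes (the reasoning behind Eq.~(\ref{primdec-ix}) applies to any finite subset of $\mathbb{P}^{s-1}$): then $I(X)=I(Y)$ forces the two sets of associated primes to coincide, and since $[Q]\mapsto I_{[Q]}$ is injective this immediately yields $X=Y$.
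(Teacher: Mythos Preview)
Your main argument is correct and is a genuinely different route from the paper's. The paper does not build a separating polynomial; instead it takes an arbitrary $[\alpha]\in Y$, observes that $I_{[\alpha]}$ is a minimal prime of $I(Y)=I(X)$, hence equals $I_{[\gamma]}$ for some $[\gamma]\in X$, and then uses the uniqueness of the reduced Gr\"obner basis of this prime (with respect to the lex order $t_s\succ\cdots\succ t_1$) to force $[\alpha]=[\gamma]$. In other words, the paper's argument is precisely your ``alternative'' paragraph, with the Gr\"obner-basis step supplying the injectivity of $[Q]\mapsto I_{[Q]}$.

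Your product-of-linear-forms construction is more elementary and self-contained: it uses only the finiteness of $X$ and basic linear algebra over $K$, and avoids any appeal to primary decomposition or Gr\"obner bases. The paper's approach, on the other hand, ties the lemma more tightly to the decomposition in Eq.~(\ref{primdec-ix}) already established earlier, and makes explicit why two points with the same vanishing ideal must coincide. Both are short; yours is arguably the more portable argument, while the paper's keeps the exposition within the minimal-prime framework used throughout.
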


\begin{proof} Let $[\alpha]=[(\alpha_i)]$ be a point in $Y$. The
ideal $\mathfrak{p}=(\{\alpha_1t_i-\alpha_it_1\}_{i=2}^s)$ is a
minimal prime of $I(Y)$, then $\mathfrak{p}$ is a minimal prime of
$I(X)$. Thus $\mathfrak{p}=(\{\gamma_1t_i-\gamma_it_1\}_{i=2}^s)$ for
some $[(\gamma_i)]\in X$. Notice that 
$\mathcal{G}_1=\{t_i-(\alpha_i/\alpha_1)t_1\}_{i=2}^s$ and 
$\mathcal{G}_2=\{t_i-(\gamma_i/\gamma_1)t_1\}_{i=2}^s$ are both
reduced Gr\"obner basis of $\mathfrak{p}$ with respect to the lex
ordering $t_s\succ\cdots\succ t_1$. Then by the uniqueness 
of such basis \cite{CLO} we obtain $\mathcal{G}_1=\mathcal{G}_2$. 
Hence $\alpha_i/\alpha_1=\gamma_i/\gamma_1$ for $i=1,\ldots,s$ and 
$(\alpha_i)=(\alpha_1/\gamma_1)(\gamma_i)$, i.e.,
$[(\alpha_i)]=[(\gamma_i)]$. This prove that $[(\alpha_i)]\in X$, as
required.
\end{proof}

\begin{proposition}\label{sept29-09-5} If $\mathcal{A}$ is homogeneous and
$\mathbb{Z}^n/\mathbb{Z}\{v_i-v_1\}_{i=2}^s$ is torsion-free, 
then $X=V_\mathcal{A}$. In particular we have equality for any $\mathcal{A}$ arising
from a connected or bipartite graph.
\end{proposition}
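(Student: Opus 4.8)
The plan is to prove the set equality by showing that $X$ and $V_\mathcal{A}$ share the same vanishing ideal and then invoking Lemma~\ref{sept25-09}. Write $Q=I_\mathcal{A}+(\{t_i^{q-1}-t_1^{q-1}\}_{i=2}^s)$, so that $V_\mathcal{A}=V(Q)$ by definition, and set $L=\mathbb{Z}\{v_i-v_1\}_{i=2}^s$. Since $\mathbb{Z}^n/L$ is torsion-free, multiplication by $q-1$ on $\mathbb{Z}^n/L$ is injective; that is, the map $\phi$ of Theorem~\ref{vila-dictaminadora} is injective (equivalently, $q-1\notin\mathcal{Z}(\mathbb{Z}^n/L)$, cf.\ Remark~\ref{oct15-09}). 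Hence Theorem~\ref{vila-dictaminadora} applies and yields the equality $I(X)=(Q\colon(t_1\cdots t_s)^\infty)$.

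Next I would record the chain $X\subset V_\mathcal{A}\subset\mathbb{T}$. The inclusion $V_\mathcal{A}\subset\mathbb{T}$ is built into the definition $V_\mathcal{A}=V(I_\mathcal{A})\cap\mathbb{T}$. For $X\subset V_\mathcal{A}$, every point of $X$ has all coordinates in $K^*$, so $X\subset\mathbb{T}$, and every binomial of $I_\mathcal{A}$ vanishes on $X$ (as observed before Theorem~\ref{vila-dictaminadora}), so $X\subset V(I_\mathcal{A})$ as well. In particular, from $X\subset V_\mathcal{A}$ I obtain the inclusion of vanishing ideals $I(V_\mathcal{A})\subset I(X)$.

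The crux is the reverse inclusion $I(X)\subset I(V_\mathcal{A})$, and this is where I expect the main work. The key observation is that $I(V_\mathcal{A})$ is already saturated with respect to $t_1\cdots t_s$. Indeed, $V_\mathcal{A}$ is a finite set of points, so $I(V_\mathcal{A})=\bigcap_{[P]\in V_\mathcal{A}}I_{[P]}$ with each $I_{[P]}$ prime, and the zero divisors of $S/I(V_\mathcal{A})$ are $\bigcup_{[P]}I_{[P]}$. Since $V_\mathcal{A}\subset\mathbb{T}$, no point of $V_\mathcal{A}$ has a vanishing coordinate, so $t_i\notin I_{[P]}$ for all $i$ and all $[P]$; thus each $t_i$ is a non-zero-divisor on $S/I(V_\mathcal{A})$ and $(I(V_\mathcal{A})\colon(t_1\cdots t_s)^\infty)=I(V_\mathcal{A})$. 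Combining this with the trivial inclusion $Q\subset I(V_\mathcal{A})$, I get
$$
I(X)=(Q\colon(t_1\cdots t_s)^\infty)\subset(I(V_\mathcal{A})\colon(t_1\cdots t_s)^\infty)=I(V_\mathcal{A}).
$$
Together with the previous paragraph this gives $I(X)=I(V_\mathcal{A})$, and since $X\subset V_\mathcal{A}\subset\mathbb{T}$, Lemma~\ref{sept25-09} forces $X=V_\mathcal{A}$.

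For the final assertion, I would verify that $\mathbb{Z}^n/L$ is torsion-free when $\mathcal{A}$ comes from a connected or bipartite graph. Such $\mathcal{A}$ is homogeneous (take $x_0=\tfrac12\mathbf{1}$, so $\langle v_i,x_0\rangle=1$ for all $i$). By Lemma~\ref{mar18-01}$(\mathrm{i}_1)$ the torsion of $\mathbb{Z}^n/L$ is isomorphic to that of $\mathbb{Z}^{n+1}/\mathbb{Z}\mathcal{B}$, and the computation in the proof of Corollary~\ref{sept29-09-2} shows $\Delta_r(B)=1$ exactly when the number $c_1$ of non-bipartite components satisfies $c_1\leq 1$. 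Both the connected case ($c_1\in\{0,1\}$) and the bipartite case ($c_1=0$) fall in this range, so $\mathbb{Z}^{n+1}/\mathbb{Z}\mathcal{B}$, and hence $\mathbb{Z}^n/L$, is torsion-free, and the equality $X=V_\mathcal{A}$ follows from the main statement.
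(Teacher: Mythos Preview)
Your proposal is correct and follows essentially the same route as the paper: you sandwich $I(V_\mathcal{A})$ between $(Q\colon(t_1\cdots t_s)^\infty)$ and $I(X)$, invoke Theorem~\ref{vila-dictaminadora} to collapse the chain, and conclude via Lemma~\ref{sept25-09}. Your treatment of the ``in particular'' clause (via Lemma~\ref{mar18-01}($\mathrm{i}_1$) and the $\Delta_r(B)$ computation from Corollary~\ref{sept29-09-2}) is more explicit than the paper's, which leaves that step to the reader.
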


\begin{proof} The inclusion $X\subset V_\mathcal{A}$ is easy to see.
The ideal $I(V_\mathcal{A})$ is a graded radical ideal such that 
$t_i$ is not a zero divisor of $S/I(V_\mathcal{A})$ for all $i$. This
follows by observing the equality 
\begin{equation*}
I(V_\mathcal{A})=\cap_{[P]\in V_\mathcal{A}}I_{[P]}
\end{equation*}
where $I_{[P]}=(\alpha_1t_2-\alpha_2t_1,\alpha_1t_3-\alpha_3t_1,\ldots,
\alpha_1t_s-\alpha_st_1)$ is the prime ideal generated by the homogeneous
polynomials of $S$ that vanish on $[P]=[(\alpha_i)]$. Hence it is seen
that
$$
(I_\mathcal{A}+(t_2^{q-1}-t_1^{q-1},\ldots,t_s^{q-1}-t_1^{q-1})\colon(t_1\cdots
t_s)^{\infty})\subset I(V_\mathcal{A})\subset I(X).
$$
By Theorem~\ref{vila-dictaminadora} equality holds everywhere. Thus 
$I(V_\mathcal{A})=I(X)$. Then by Lemma~\ref{sept25-09} we get
$V_\mathcal{A}=X$. 
\end{proof}

Combining this result with Theorem~\ref{vila-dictaminadora} we
obtain:

\begin{corollary}{\rm (Finite Nullstellensatz)}\label{finite-nullstellensatz} 
If $\mathcal{A}$ is homogeneous and
$\mathbb{Z}^n/\mathbb{Z}\{v_i-v_1\}_{i=2}^s$ is torsion-free, then 
$$
(I_\mathcal{A}+(\{t_i^{q-1}-t_1^{q-1}\}_{i=2}^s)\colon(t_1\cdots
t_s)^{\infty})=
I(V(I_\mathcal{A}+(\{t_i^{q-1}-t_1^{q-1}\}_{i=2}^s))).
$$
In particular this equality holds for any $\mathcal{A}$ arising
from a connected or bipartite graph.
\end{corollary}

\section{Minimum distance in parameterized
codes}\label{minimum-distance-section}

As an application of our results, in this section we present an 
upper bound for the minimum distance of a parameterized code 
arising from   
a connected non-bipartite graph. A comparison between our bound and
the Singleton bound 
will be given. The geometric perspective of Section~\ref{vanishing-ideal-projective} plays a role
here. We will give an explicit formula for the minimum distance of
$C_X(d)$ when
$X$ is a projective torus in $\mathbb{P}^2$.

We begin with a general fact about parameterized linear codes. The
dimension of $C_X(d)$ is increasing, as a function of 
$d$, until it reaches a constant value. 
This behaviour was pointed out in \cite{duursma-renteria-tapia}
(resp. \cite{geramita-cayley-bacharach}) for finite (resp. infinite) fields. 
\begin{proposition}[\cite{duursma-renteria-tapia,geramita-cayley-bacharach}] 
Let $H_X(d)$ be the dimension of the parameterized linear code $C_X(d)$
and let $r$ be the regularity index of $S/I(X)$. Then 
$$
1=H_X(0)<H_X(1)<\cdots<H_X(r-1)<H_X(d)=|X|\ \mbox{ for }d\geq r.
$$ 
\end{proposition}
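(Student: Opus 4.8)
The plan is to exploit the Cohen--Macaulay structure of $S/I(X)$ established in Theorem~\ref{ipn-ufpe-cinvestav} and to reduce everything to the Hilbert function of an Artinian quotient. Write $R=S/I(X)$. By Theorem~\ref{ipn-ufpe-cinvestav}(b)--(c), $R$ is a Cohen--Macaulay ring of dimension $1$ and the variable $t_s$ is a non-zero-divisor on $R$ of degree $1$. Hence I would pass to the Artinian reduction $\overline{R}=R/t_sR=S/(I(X),t_s)$, a standard graded Artinian $K$-algebra, and set $h_i=\dim_K\overline{R}_i$.

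First I would record the effect of the regular element on Hilbert series. Since $t_s$ is a homogeneous non-zero-divisor of degree $1$, multiplication by $t_s$ yields a short exact sequence $0\to R_{d-1}\to R_d\to \overline{R}_d\to 0$ in each degree, so the Hilbert series factor as $F_X(t)=F_{\overline{R}}(t)/(1-t)$ with $F_{\overline{R}}(t)=\sum_{i\geq 0}h_it^i$. Comparing coefficients of $t^d$ gives $H_X(d)=\sum_{i=0}^{d}h_i$ for every $d$, with the convention $h_i=0$ once $\overline{R}_i=0$. In particular $H_X(0)=h_0=\dim_K\overline{R}_0=\dim_K K=1$, and $H_X(d)-H_X(d-1)=h_d$ for all $d\geq 1$.

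The crux is the positivity and the ``no gaps'' property of the $h_i$. Because $\overline{R}$ is standard graded, $\overline{R}_i=\overline{R}_1\cdot\overline{R}_{i-1}$, so $\overline{R}_i=0$ forces $\overline{R}_j=0$ for all $j\geq i$; as $\overline{R}_0=K\neq 0$, the nonzero graded pieces of $\overline{R}$ occupy exactly an initial interval $\{0,1,\dots,r\}$. Therefore $h_0,\dots,h_r$ are strictly positive and $h_i=0$ for $i>r$. This simultaneously forces $H_X(d)=h_0+\cdots+h_r$ to be constant for $d\geq r$ and forces the increments $h_d=H_X(d)-H_X(d-1)$ to be strictly positive for $1\leq d\leq r$, giving the chain $1=H_X(0)<H_X(1)<\cdots<H_X(r)=H_X(r+1)=\cdots$.

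Finally I would match this integer $r$ with the regularity index. The common stable value is $\sum_{i=0}^r h_i$, which is the degree of $S/I(X)$, equal to $|X|$ as noted in the introduction; and since $h_r>0$ we have $H_X(r-1)<H_X(r)=|X|$, so $r$ is precisely the least degree from which $H_X$ agrees with its constant Hilbert polynomial, i.e.\ the regularity index. The main obstacle is the positivity of all the intermediate $h_i$: a priori the $h$-vector of a one-dimensional Cohen--Macaulay algebra need not be positive, and the argument crucially uses that $\overline{R}$ is generated in degree one to exclude interior gaps. Everything else is bookkeeping with the factored Hilbert series.
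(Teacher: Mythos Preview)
Your argument is correct. Note that the paper does not supply its own proof of this proposition: it is quoted from \cite{duursma-renteria-tapia,geramita-cayley-bacharach} without argument. Your proof via the Artinian reduction $\overline{R}=R/t_sR$ and the identity $H_X(d)=\sum_{i\le d}h_i$ is exactly the mechanism the paper invokes informally at the start of Section~\ref{degree-of-coordinate-ring}, where it records that $F_X(t)=(h_0+\cdots+h_rt^r)/(1-t)$ with $h_i=\dim_K(S/(I(X),t_s))_i>0$ and that $r$ is the regularity index; you have simply supplied the missing justification, namely that in a standard graded Artinian quotient $\overline{R}_i=0$ forces $\overline{R}_j=0$ for $j\ge i$, so the $h$-vector has no internal zeros. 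One small remark: your closing caveat that ``a priori the $h$-vector of a one-dimensional Cohen--Macaulay algebra need not be positive'' is slightly misleading here, since for a \emph{standard} graded CM algebra of dimension~$1$ the $h$-vector is always strictly positive by precisely the argument you gave; the caveat applies only in the non-standard-graded setting.
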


The minimum distance of $C_X(d)$ has the opposite behaviour. It is decreasing, as a
function of $d$, until
it reaches a constant value.

\begin{proposition}\label{minimum-distance-behaviour} If $\delta_d>1$
$($resp. $\delta_d=1)$, then 
$\delta_d>\delta_{d+1}$ $($resp. $\delta_{d+1}=1)$.
\end{proposition}

\begin{proof} To show the first assertion assume that $\delta_d>1$. 
For any homogeneous polynomial $F$ in $S$ we set 
$Z_X(F)=\{[P]\in X\, \vert\, F(P)=0\}$. By definition of $\delta_d$ it 
suffices to show that 
\begin{eqnarray*}
\max\{|Z_X(F)|\colon F\in
S_d;\, {\rm ev}_d(F)\neq 0\}&<&\max\{|Z_X(F)|\colon F\in
S_{d+1};\, {\rm ev}_{d+1}(F)\neq 0\},
\end{eqnarray*}
Let $F$ be a polynomial in
$S_d$ such that ${\rm ev}_d(F)\neq 0$ and with $|Z_X(F)|$ as large as
possible. As $\delta_d>1$, there
are $[P_1]\neq [P_2]$ in $X$ with $P_1=(1,a_2,\ldots,a_s)$ and 
$P_2=(1,b_2,\ldots,b_s)$ such that $F(P_i)\neq 0$ for $i=1,2$. Then
$a_k\neq b_k$ for some $k$. Let $G=F(a_kt_1-t_k)$.  Thus $G\in
S_{d+1}$, $G$ does not vanish on $X$ because $G(P_2)\neq 0$ and $G$
has more zeros than  
$F$. This proves the inequality above. The second assertion is also
easy to show. 
\end{proof}

The method of proof of the next result can also be applied to 
other families of parameterized codes, e.g., to parameterized
codes arising from Ehrhart clutters \cite{chordal} or 
from bipartite graphs.    

We come to our main application. 

\begin{theorem}\label{carlos-aron-vila} 
Let $G$ be a connected non-bipartite graph with $s$ edges, let
$V_G=\{y_1,\ldots,y_n\}$ be its vertex set, and let $X$ be the
algebraic toric set parameterized by the set of monomials $y_iy_j$ such that
$\{y_i,y_j\}$ is an edge of $G$. If $\delta_d$ is the minimum
distance of $C_X(d)$ and $d\geq 1$,  then   
$$
\delta_d\leq \left\{\begin{array}{cll}
(q-1)^{n-(k+2)}(q-1-\ell)&\mbox{if}&d\leq (q-2)(n-1)-1,\\
1&\mbox{if}&d\geq (q-2)(n-1),
\end{array}
 \right.
$$
where $k$ and $\ell$ are the
unique integers so that $k\geq 0$, $1\leq \ell\leq q-2$ and 
$d=k(q-2)+\ell$. 
\end{theorem}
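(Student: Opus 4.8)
The plan is to bound $\delta_d$ from above by exhibiting, for each admissible $d$, a single form $F\in S_d$ with $\mathrm{ev}_d(F)\neq 0$ whose number of non-vanishing points on $X$ is exactly $(q-1)^{n-(k+2)}(q-1-\ell)$; since $\|\mathrm{ev}_d(F)\|=|X|-|Z_X(F)|$ equals that number, this gives $\delta_d\le \|\mathrm{ev}_d(F)\|$. First I would record the two structural inputs: by Corollary~\ref{sept29-09-1} one has $|X|=(q-1)^{n-1}$, and (since $G$ is connected and non-bipartite) the parameterizing map $\theta\colon(K^*)^n\to X$ has kernel exactly the diagonal $\mathcal D^*=\{(\lambda,\dots,\lambda)\}$ of order $q-1$, so writing $x_i=\beta^{c_i}$ with $\beta$ a generator of $K^*$ identifies $X$ with the free module $V:=(\mathbb Z/(q-1))^n/\langle\mathbf 1\rangle$ of rank $n-1$. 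Under this identification the coordinate of $X$ indexed by an edge $e=\{y_a,y_b\}$ is $\beta^{L_e}$ with $L_e=c_a+c_b$, and for any two edges the ratio of the corresponding coordinates is the well-defined projective quantity $\beta^{L_e-L_{e'}}$.

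Next I would build $F$ as a product of linear forms. Writing $d=k(q-2)+\ell$ with $k\ge 0$, $1\le\ell\le q-2$, note that $d\le(q-2)(n-1)-1$ forces $k\le n-2$. I would select $k+1\le n-1$ edge pairs $(e_i,e_i')$, $0\le i\le k$, and set
\[
F=\Big(\prod_{i=1}^{k}\ \prod_{\gamma\in S_i}\big(t_{e_i}-\gamma\,t_{e_i'}\big)\Big)\Big(\prod_{\gamma\in S'}\big(t_{e_0}-\gamma\,t_{e_0'}\big)\Big),
\]
where each $S_i\subset K^*$ has $q-2$ elements (omitting one value) and $S'\subset K^*$ has $\ell$ elements, so that $\deg F=k(q-2)+\ell=d$. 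The decisive requirement is that the $k+1$ ratio-functionals $\lambda_i:=L_{e_i}-L_{e_i'}$ be part of a $\mathbb Z/(q-1)$-basis of $V^*$, so that the map $V\to(\mathbb Z/(q-1))^{k+1}$, $c\mapsto(\lambda_i(c))_i$, is a split surjection with all fibers of size $(q-1)^{n-2-k}$. This is where connectivity and non-bipartiteness enter: I would exhibit $n-1$ independent such functionals from the edges of a spanning connected non-bipartite (unicyclic) subgraph $H\subseteq G$. Differences along adjacent tree edges give signed vertex differences, $L_{\{a,b\}}-L_{\{b,c\}}=c_a-c_c$, which connect vertices within one class of $H$'s bipartition, while the odd cycle supplies the functional crossing the two classes; building $H$ up from its odd cycle by successively attaching leaves shows the resulting $(n-1)\times(n-1)$ transition matrix is triangular with $\pm1$ diagonal, hence unimodular over every $\mathbb Z/(q-1)$. (For a bipartite graph this procedure yields only rank $n-2$, matching $|X|=(q-1)^{n-2}$.)

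Granting this, the count is immediate: a point of $X$ is a non-zero of $F$ iff $\lambda_i(c)$ equals the value omitted from $S_i$ for $i=1,\dots,k$ and $\lambda_0(c)$ avoids the $\ell$ discrete logarithms of $S'$; by the split surjectivity the number of such $c$ is $(q-1-\ell)$ times the fiber size $(q-1)^{n-2-k}$, i.e.\ exactly $(q-1)^{n-(k+2)}(q-1-\ell)$. As this is $\ge 1$, we get $\mathrm{ev}_d(F)\neq 0$, so $F\notin I(X)_d$ and $\delta_d\le(q-1)^{n-(k+2)}(q-1-\ell)$. For $d=(q-2)(n-1)$ the choice $k=n-2$, $\ell=q-2$ (all $n-1$ ratios pinned to single values) leaves exactly one non-zero point, giving $\delta_{(q-2)(n-1)}=1$; then Proposition~\ref{minimum-distance-behaviour} propagates $\delta_d=1$ to every $d\ge(q-2)(n-1)$. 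The main obstacle is precisely the independence lemma of the second paragraph: producing $n-1$ ratio-functionals $L_e-L_{e'}$ that form a basis of $V^*$ over the non-field ring $\mathbb Z/(q-1)$, uniformly in $q$. A mere spanning-set argument does not suffice over $\mathbb Z/(q-1)$, so the odd cycle must be used to write down an explicitly unimodular family; this is the combinatorial shadow of the torsion-freeness of $\mathbb Z^{n+1}/\mathbb Z\mathcal B$ established in Corollary~\ref{sept29-09-2}, and it is exactly what separates the non-bipartite case (full rank $n-1$) from the bipartite one.
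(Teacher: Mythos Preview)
Your proposal is correct and follows essentially the same strategy as the paper: both pass to a spanning unicyclic subgraph $H$ with $n$ edges, observe that the $n$ edge coordinates coming from $H$ behave like free projective-torus coordinates on $X$, and then construct the test polynomial as a product of linear forms $(\beta^j t_{e}-t_{e'})$ with an explicit zero count. The packaging differs. The paper formalizes the key structural fact as the equality $X_1=\mathbb{T}$ in $\mathbb{P}^{n-1}$ (obtained directly from Corollary~\ref{sunday-morning-06-09-09}(b) since $I_{\mathcal{B}'}=0$ and $\mathbb{Z}^{n+1}/\mathbb{Z}\mathcal{B}'$ is torsion-free), and then bounds $\delta_d$ by the minimum distance $\delta_d'$ of the torus code $C_{X_1}(d)$; the zero count on $\mathbb{T}$ is then completely elementary. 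You instead phrase the same fact as ``the ratio functionals $L_{e_i}-L_{e_0}$ form a $\mathbb{Z}/(q-1)$-basis of $V^*$'' and count fibers of a split surjection. Your route is a bit more work because you sketch an ad~hoc unimodularity argument (adjacent tree-edge differences plus the odd cycle), whereas the cleanest path---and the one your final sentence already gestures at---is simply to fix one edge $e_0$ of $H$ and note that the $n-1$ vectors $v_i-v_0$ generate the lattice $L$ of Lemma~\ref{mar18-01}, which is saturated in $\mathbb{Z}^n$ because $\mathbb{Z}^{n+1}/\mathbb{Z}\mathcal{B}'$ is torsion-free; hence they form a $\mathbb{Z}$-basis of the sum-zero sublattice and thus a basis over every $\mathbb{Z}/(q-1)$. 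For $d\ge (q-2)(n-1)$ the paper uses the regularity index of $S'/I(X_1)$ and the Singleton bound to force $\delta_d'=1$, while you reach the same conclusion by taking $k=n-2$, $\ell=q-2$ in the explicit polynomial and then invoking Proposition~\ref{minimum-distance-behaviour}; both are valid.
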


\begin{proof} Let $v_1,\ldots,v_s$ be the set of all
$e_i+e_j\in\mathbb{R}^n$ such that $\{y_i,y_j\}$ is an edge of $G$. 
Thus $X$ is the algebraic toric set parameterized by
$y^{v_1},\ldots,y^{v_s}$. As $G$ is a connected non-bipartite graph, there is a
connected subgraph $H$ of $G$ with the same vertex set as $G$ and with a unique
cycle of odd length. Thus $H$ is connected non-bipartite has $n$ vertices and $n$ edges. 
We may assume that $\{v_1,\ldots,v_n\}$ is the
set of all $e_i+e_j\in\mathbb{R}^n$ such that $\{y_i,y_j\}$ is an edge of $H$. 

Consider the algebraic toric set parameterized by
$y^{v_1},\ldots,y^{v_n}$:
$$
X_1=\{[(x^{v_1},\ldots,
x^{v_n})] \vert\, x_i\in K^*\mbox{ for all }i\}\subset\mathbb{P}^{n-1}.
$$
We claim that $I(X_1)=(\{t_i^{q-1}-t_n^{q-1}\}_{i=1}^{n-1})$. Let
$B'$ be the matrix whose columns are the vectors in 
$\mathcal{B}'=\{(v_1,1),\ldots,(v_n,1)\}$. From the proof of
Corollary~\ref{sept29-09-1}, we obtain that the group
$\mathbb{Z}^{n+1}/\mathbb{Z}\mathcal{B}'$ is torsion-free, and since 
$\mathcal{B}'$ is linearly independent, using 
Corollary~\ref{sunday-morning-06-09-09}(b) we obtain
\begin{eqnarray*}
(\{t_i^{q-1}-t_n^{q-1}\}_{i=1}^{n-1})&=&(\{t_i^{q-1}-t_n^{q-1}\}_{i=1}^{n-1})\colon (t_1\cdots
t_n)^{\infty})\\
&=&(I_{\mathcal{B}'}+(\{t_i^{q-1}-t_n^{q-1}\}_{i=1}^{n-1}))\colon (t_1\cdots
t_n)^{\infty})\\ 
&=&I(X_1).
\end{eqnarray*}
This completes the proof of the claim. Let
$\mathbb{T}=\{[(x_1,\ldots,x_n)]\vert\,
x_i\in K^*\, \forall\, i\}$ be a projective torus in 
$\mathbb{P}^{n-1}$. By Corollary~\ref{sept15-09}, we have
$I(\mathbb{T})=I(X_1)$. Consequently  by Lemma~\ref{sept25-09}, we 
conclude the equality $\mathbb{T}=X_1$. 

Let $\delta_d'$ be the
minimum distance of $C_{X_1}(d)$. Next we show that
$\delta_d\leq\delta_d'$. By Corollary~\ref{sept29-09-1} one has
$|X|=|X_1|=(q-1)^{n-1}$. Therefore the projection map
$$
\overline{\theta}_1\colon X\rightarrow X_1,\ \ \ \ \ \ \ \
[(\alpha_1,\ldots,\alpha_s)]\mapsto[(\alpha_1,\ldots,\alpha_n)]  
$$
is an isomorphism of multiplicative groups. For any homogeneous
polynomial $F$, we denote its zero set by $Z_X(F)=\{[P]\in X\, \vert\, F(P)=0\}$.
Let $S'=K[t_1,\ldots,t_n]=\oplus_{d=0}^\infty S_d'$ and let $F_1\in
S_d'$ be a polynomial such that ${\rm ev}_d(F_1)\neq 0$ and with
$|Z_{X_1}(F_1)|$ as large as possible, i.e., we choose $F_1$ so that
$\delta_d'=|X_1|-|Z_{X_1}(F_1)|$. We can regard the polynomial
$F_1=F_1(t_1,\ldots,t_n)$ as an element of $S$ and denote it by $F$. 
The map $\overline{\theta}_1$
induces a bijective map
$$
\overline{\theta}_1\colon Z_X(F)\mapsto Z_{X_1}(F_1),\ \ \ \ \ \ \ \ 
[P]\mapsto \overline{\theta}_1([P]).
$$     
Therefore we have the inequality
\begin{eqnarray*}
\max\{|Z_X(F)|\colon F\in
S_d;\, {\rm ev}_d(F)\neq 0\}&\geq &\max\{|Z_{X_1}(F_1)|\colon F_1\in
S_d';\, {\rm ev}_d(F_1)\neq 0\}.
\end{eqnarray*}
Consequently  $\delta_d\leq \delta_d'$. 

Case (I): First we consider the case $1\leq d\leq (q-2)(n-1)-1$. Let 
\begin{eqnarray*}
& & M=\max\{|Z_{X_1}(F_1)|\colon F_1\in
S_d';\, {\rm ev}_d(F_1)\neq 0\},\\ 
& & \ \ \ \ \ \ M_1=(q-1)^{n-k-2}((q-1)^{k+1}-(q-1)+\ell).
\end{eqnarray*}
Next we show that $M\geq M_1$. It suffices to exhibit a homogeneous
polynomial $F_1$ in $S'$ of degree $d$ with exactly $M_1$ roots in
$X_1=\mathbb{T}$. Let $\beta$ be a generator of
the cyclic group $(K^*,\,\cdot\, )$. Consider the polynomial
$F_1=f_1f_2\ldots f_kg_\ell$, where
$f_1,\ldots,f_k,g_\ell$ are given by
\begin{eqnarray*}
f_1&=&(\beta t_1-t_2)(\beta^2 t_1-t_2)\cdots (\beta^{q-2} t_1-t_2),\\
f_2&=&(\beta t_1-t_3)(\beta^2 t_1-t_3)\cdots (\beta^{q-2} t_1-t_3),\\
\vdots&\vdots &\ \ \ \ \ \ \ \ \ \ \ \ \ \ \ \ \ \ \vdots\\
f_k&=&(\beta t_1-t_{k+1})(\beta^2 t_1-t_{k+1})\cdots (\beta^{q-2}
t_1-t_{k+1}),\\
g_\ell&=&(\beta t_1-t_{k+2})(\beta^2 t_1-t_{k+2})\cdots (\beta^{\ell}
t_1-t_{k+2}).
\end{eqnarray*}
Now, the roots of $F_1$ in $X_1$ are in one to one correspondence with the
union of the following sets: 
$$
\begin{array}{c}
\{1\}\times \{ \beta^i \}_{i=1}^{q-2} \times (K^*)^{n-2},\\
\{1\}\times \{1\} \times \{ \beta^i \}_{i=1}^{q-2} \times
(K^*)^{n-3},\\
\vdots\\
\{1\}\times \cdots \times \{1\} \times \{ \beta^i \}_{i=1}^{q-2} \times
(K^*)^{n-(k+1)},\\
\{1\}\times \cdots \times \{1\} \times \{ \beta^i \}_{i=1}^{\ell} \times
(K^*)^{n-(k+2)}.
\end{array}
$$
Therefore the number of zeros of $F_1$ in $X_1$ is given by
\begin{eqnarray*}
|Z_{X_1}(F_1)|&=&(q-2)\left[(q-1)^{n-2}+(q-1)^{n-3}+\cdots+
(q-1)^{n-(k+1)}\right]+\ell(q-1)^{n-(k+2)}\\
&=&(q-1)^{n-(k+2)}\left[(q-1)^{k+1}-(q-1)+\ell\right]=M_1,
\end{eqnarray*}
as required. Thus $M\geq M_1$. Altogether we get
\begin{eqnarray*}
\delta_d\leq \delta_d'&=&\min\{\|{\rm ev}_d(F_1)\|
\colon {\rm ev}_d(F_1)\neq 0; F_1\in S_d'\}\\
&=&|X_1|-\max\{|Z_{X_1}(F_1)|\colon F_1\in
S_d';\, {\rm ev}_d(F_1)\neq 0\}\\ 
&\leq&(q-1)^{n-1}-\left((q-1)^{n-k-2}((q-1)^{k+1}-(q-1)+\ell)\right)\\
&=&(q-1)^{n-k-2}((q-1)-\ell),
\end{eqnarray*}
where $\|{\rm ev}_d(F_1)\|$ is the number of non-zero
entries of ${\rm ev}_d(F_1)$. This completes the proof of the case 
$1\leq d\leq (q-2)(n-1)-1$. 

Case (II): Next we consider the case $d\geq (q-2)(n-1)$. Since
$I(X_1)=(\{t_i^{q-1}-t_1^{q-1}\}_{i=2}^n)$, the Hilbert series of $S'/I(X_1)$ is given by
$F_{X_1}(t)=(1-t^{q-1})^{n-1}/(1-t)^n$. Hence the regularity index of 
$S'/I(X_1)$ equals $(n-1)(q-2)$. Thus $\dim_K C_{X_1}(d)=|X_1|$
for $d\geq (n-1)(q-2)$. By the Singleton bound we get
$$
1\leq\delta_d\leq \delta_d'\leq |X_1|-\dim_K C_{X_1}(d)+1=1
$$
for $d\geq (n-1)(q-2)$. Thus $\delta_d=1$ for $d\geq
(n-1)(q-2)$.
\end{proof}

\begin{remark}\label{comparison-remark}\rm If $G$ is an odd cycle of length $n\geq
3$ and $X$ is the algebraic toric set parameterized by the edges of 
$G$, then the minimum distance of $C_X(d)$ equals
$\delta_d'$ \cite{ci-codes}. This means that for any odd cycle the bound of
Theorem~\ref{carlos-aron-vila} is sharper that the Singleton bound 
for any $d\geq 1$. For connected non-bipartite graphs which are not
cycles, our bound is sharper than the Singleton bound
within a certain range (see Example~\ref{k5}).  
\end{remark}

\begin{example}\label{k5}\rm
Let $G$ be the following complete graph on five vertices and let 
$X$ be the algebraic toric set parameterized by all $y_iy_j$ such 
that $\{y_i,y_j\}$ is an edge of $G$.
\begin{center}
$$
\setlength{\unitlength}{.040cm}
\thicklines
\begin{picture}(40,30)(0,-10)
\put(-30,0){\circle*{4.0}}
\put(30,0){\circle*{4.0}}
\put(34,0){$y_2$}
\put(0,20){\circle*{4.0}}
\put(7,20){$y_1$}
\put(0,20){\line(1,-2){20}}
\put(-30,0){\line(5,-2){50}}
\put(-43,0){$y_5$}
\put(-20,-20){\circle*{4.0}}
\put(-33,-20){$y_4$}
\put(20,-20){\circle*{4.0}}
\put(25,-20){$y_3$}
\put(-30,0){\line(3,2){30}}
\put(30,0){\line(-3,2){30}}
\put(-30,0){\line(1,-2){10}}
\put(-30,0){\line(1,0){60}}
\put(-20,-20){\line(1,0){40}}
\put(20,-20){\line(1,2){10}}
\put(-20,-20){\line(5,2){50}}
\put(-20,-20){\line(1,2){20}}
\end{picture}
$$
\end{center}

\vspace{5mm}

Let $C_X(d)$ be the parameterized code of order $d$ over the
field $K=\mathbb{F}_7$ and let 
$b_d$ (resp. $\delta_d'$) be the Singleton bound (resp. the bound of
Theorem~\ref{carlos-aron-vila}). Then the minimum distance of 
$C_X(d)$ is bounded by $\min\{b_d,\delta_d'\}$. Using {\em
Macaulay\/}$2$ \cite{mac2}, together with
Theorem~\ref{ipn-ufpe-cinvestav}, 
we obtain:   
\begin{eqnarray*}
&&\left.\begin{array}{c|c|c|c|c|c|c|c|c|c|c|c|c|c} d & 1 & 2 & 3 & 4&
5 & 6 & 7 & 8 & 9 & 10 &11&12&13 
 \\\hline
b_d & 1287 & 1252 & 1162& 977 & 646 & 316 & 127 & 36& 6 & 1 & 1
&1&1\\
\hline  
\delta_d'& 1080 & 864 & 648 & 432 & 216 & 180 & 144& 108&72 &
36&30&24&18
\end{array}\right.\\
		&&\\
		&&\left.\begin{array}{c|c|c|c|c|c|c|c} d& 14 & 15 &
		16 & 17 & 18 & 19 & 20  \\\hline \delta_d'
		&12& 6& 5 & 4 & 3 & 2 & 1
		\end{array}\right.   
\end{eqnarray*}
Thus our bound is better than the Singleton bound for $d=1,\ldots,6$.
For $d>7$ is the other way around. If $\mathbb{T}$ is a projective
torus in $\mathbb{P}^4$, it is seen that the minimum distance of
$C_\mathbb{T}(d)$ is exactly $\delta_d'$, i.e., the upper bound
$\delta_d'$ is the minimum distance of a linear code. 
\end{example}

A linear code is called
{\it maximum distance  separable\/} (MDS for short) if equality holds
in the Singleton bound. Reed-Solomon codes are MDS
\cite[p.~42]{stichtenoth}. The next result is not hard to show. 
It follows by adapting the argument of \cite[p.~42]{stichtenoth}. 

\begin{proposition} Let $\mathbb{T}=\{[(x_1,x_2)]\vert\, x_i\in 
K^*\mbox{ for }i=1,2\}$ be a projective torus in $\mathbb{P}^1$. 
Then the minimum distance $\delta_d$ of the parameterized code
$C_{\mathbb{T}}(d)$ is given by
$$
\delta_d=\left\{\hspace{-1mm}\begin{array}{cll}
q-1-d&\mbox{if}&1\leq d\leq q-3,\\
1&\mbox{if}&d\geq q-2,
\end{array}
 \right.
$$
and $C_\mathbb{T}(d)$ is an MDS code.
\end{proposition}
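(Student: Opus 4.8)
The plan is to identify $C_{\mathbb{T}}(d)$ with a one-variable evaluation code and then read off its minimum distance by a root count. First I would normalize the points: since both coordinates of a point of $\mathbb{T}$ lie in $K^*$, every point has a unique representative $(1,a)$ with $a\in K^*$, so $\mathbb{T}=\{[(1,a)]\mid a\in K^*\}$ and $|\mathbb{T}|=q-1$. With $f_0=t_1^d$ one has $f_0(1,a)=1$, hence for a homogeneous $f\in S_d$ the corresponding codeword is
$$
{\rm ev}_d(f)=\bigl(f(1,a)\bigr)_{a\in K^*}=\bigl(p(a)\bigr)_{a\in K^*},\qquad p(x):=f(1,x).
$$
The assignment $f\mapsto p$ is a $K$-linear isomorphism from $S_d$ onto the space of univariate polynomials of degree $\le d$, so $C_{\mathbb{T}}(d)$ is exactly the code obtained by evaluating all such $p$ at the $q-1$ elements of $K^*$. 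In particular the weight of a codeword equals $(q-1)-|\{a\in K^*\mid p(a)=0\}|$, and a codeword is nonzero precisely when $p$ does not vanish identically on $K^*$.

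Next I would compute $\delta_d$ by maximizing the number of roots in $K^*$ among nonzero codewords. For $1\le d\le q-3$ a nonzero $p$ of degree $\le d$ has at most $d<q-1$ roots, so it cannot vanish on all of $K^*$, and every codeword coming from a nonzero $p$ has weight $\ge (q-1)-d$; choosing $p=\prod_{i=1}^{d}(x-b_i)$ for distinct $b_1,\dots,b_d\in K^*$ realizes exactly $d$ roots, giving $\delta_d=q-1-d$. For $d\ge q-2$ I would exhibit a weight-one codeword: fixing $c\in K^*$, the polynomial $p=\prod_{b\in K^*\setminus\{c\}}(x-b)$ has degree $q-2\le d$, vanishes on $K^*\setminus\{c\}$ but not at $c$, so its codeword has weight $1$; since every nonzero codeword has weight $\ge 1$, this gives $\delta_d=1$.

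Finally, for the MDS assertion I would invoke Corollary~\ref{sept15-09}, which gives $I(\mathbb{T})=(t_2^{q-1}-t_1^{q-1})$. Reading the Hilbert function off this complete intersection yields $\dim_K C_{\mathbb{T}}(d)=H_{\mathbb{T}}(d)=d+1$ for $1\le d\le q-2$ and $=q-1$ for $d\ge q-2$. Substituting into the Singleton bound $\delta_d\le |\mathbb{T}|-\dim_K C_{\mathbb{T}}(d)+1$ gives the value $q-1-d$ when $d\le q-3$ and the value $1$ when $d\ge q-2$, matching the numbers just computed; hence equality holds in the Singleton bound and $C_{\mathbb{T}}(d)$ is MDS. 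There is no serious obstacle here: the only point requiring care is keeping the nonzero-codeword condition honest throughout, i.e. checking that the extremal polynomials realizing the bounds do not lie in $I(\mathbb{T})$, which is immediate since each has degree $\le q-2<q-1$ and so is nonzero at some point of $K^*$.
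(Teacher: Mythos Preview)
Your argument is correct and is precisely the Reed--Solomon root-counting argument the paper points to: the paper does not spell out a proof but merely remarks that the result ``follows by adapting the argument of \cite[p.~42]{stichtenoth}'', which is exactly your reduction to evaluating univariate polynomials of degree $\le d$ at the elements of $K^*$ and bounding the number of roots. Your verification of the MDS property via the Hilbert function of $S/(t_2^{q-1}-t_1^{q-1})$ is also in line with the paper's viewpoint.
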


Finally we compute the minimum distance for the parameterized code
defined by a  
projective torus  in $\mathbb{P}^2$.

\begin{proposition}\label{minimum-distance-p2} Let
$\mathbb{T}=\{[(x_1,x_2,x_3)]\vert\, x_i\in 
K^*\mbox{ for all }i\}$ be a projective torus in $\mathbb{P}^2$. 
Then the minimum distance $\delta_d$ of the parameterized code
$C_{\mathbb{T}}(d)$ is given by

$$
\delta_d = \left\{\begin{array}{cll}
(q-1)^2- d(q-1)&\mbox{if}&1\leq d \leq q-2,\\
2q -d-3&\mbox{if}&q-1\leq d \leq 2q-5,\\
1&\mbox{if}& d\geq 2q-4.
\end{array}\right.
$$
\end{proposition}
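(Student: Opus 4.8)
The plan is to compute $\delta_d$ from the identity $\delta_d=|\mathbb{T}|-\max\{|Z_\mathbb{T}(F)|\colon F\in S_d,\ {\rm ev}_d(F)\neq 0\}$, where $Z_\mathbb{T}(F)=\{[P]\in\mathbb{T}\colon F(P)=0\}$ and $|\mathbb{T}|=(q-1)^2$, treating the three ranges separately and in each one matching an upper bound for $\delta_d$ (produced by an explicit form with many zeros) against a lower bound for $\delta_d$ (an upper bound for the number of zeros of an arbitrary form). The first step is to observe that $\mathbb{T}\subset\mathbb{P}^2$ is exactly the torus $X_1$ occurring in the proof of Theorem~\ref{carlos-aron-vila} with $n=3$. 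Specializing the forms $F_1=f_1\cdots f_kg_\ell$ built there, with $d=k(q-2)+\ell$, $k\ge 0$, $1\le\ell\le q-2$, yields degree-$d$ forms whose zero counts give $\delta_d\le (q-1)^2-d(q-1)$ when $1\le d\le q-2$ (here $k=0$, $\ell=d$) and $\delta_d\le 2q-3-d$ when $q-1\le d\le 2q-5$ (here $k=1$, $\ell=d-q+2$). This disposes of the upper bounds in the first two ranges at once.

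For the third range I would argue via the index of regularity, exactly as in Case (II) of the proof of Theorem~\ref{carlos-aron-vila}. By Corollary~\ref{sept15-09}, $I(\mathbb{T})=(t_2^{q-1}-t_1^{q-1},\,t_3^{q-1}-t_1^{q-1})$ is a complete intersection of two forms of degree $q-1$ in three variables, so its Hilbert series is $(1-t^{q-1})^2/(1-t)^3$ and the index of regularity of $S/I(\mathbb{T})$ is $2(q-2)=2q-4$. Hence for $d\ge 2q-4$ one has $H_X(d)=|\mathbb{T}|$, the code $C_\mathbb{T}(d)$ is all of $K^{(q-1)^2}$, and therefore $\delta_d=1$. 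For the first range $1\le d\le q-2$, the matching lower bound $\delta_d\ge (q-1)^2-d(q-1)$ is precisely the minimum distance of the generalized Reed--Solomon (torus) code computed in \cite{GRH}, so equality holds there as well.

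The remaining and genuinely new content is the lower bound in the middle range $q-1\le d\le 2q-5$: one must show that every $F\in S_d$ with ${\rm ev}_d(F)\neq 0$ satisfies $|Z_\mathbb{T}(F)|\le (q-1)^2-(2q-3-d)=(q-2)^2+d$, equivalently $\|{\rm ev}_d(F)\|\ge 2q-3-d$. Here I would dehomogenize by normalizing the first coordinate, identifying the points of $\mathbb{T}$ with $(K^*)^2$ and $C_\mathbb{T}(d)$ with the evaluation over $(K^*)^2$ of the bivariate polynomials $f(u,v)=F(1,u,v)$ of total degree at most $d$ that do not vanish on $(K^*)^2$. I would then invoke the estimate of \cite{hansen} on the number of zeros in $(K^*)^2$ of such a polynomial, which is designed to yield exactly the inequality $|Z_\mathbb{T}(F)|\le (q-2)^2+d$ in this degree range. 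Combining it with the construction from the proof of Theorem~\ref{carlos-aron-vila} gives equality $\delta_d=2q-3-d$, completing the middle range.

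I expect the main obstacle to be this middle-range lower bound. Unlike the first range, it cannot be reduced to a single one-variable root count: once $d\ge q-1$ a degree-$d$ form can vanish on more ``columns'' than its degree permits for one variable, so the sharp estimate requires controlling the interaction between the two variables (equivalently, the footprint of $f$ modulo $u^{q-1}-1$ and $v^{q-1}-1$). It is exactly this two-variable estimate that is furnished by \cite{hansen}, and the delicate point is fixing the normalization so that Hansen's bound specializes cleanly to $(q-2)^2+d$ over the full interval $q-1\le d\le 2q-5$.
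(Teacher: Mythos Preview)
Your proposal is correct and follows essentially the same route as the paper: the first range is taken from \cite{GRH}, the third from the regularity index via the complete intersection Hilbert series, and the middle range combines an explicit polynomial with many zeros against Hansen's lower bound. The only cosmetic difference is that the paper applies \cite[Theorem~4.4]{hansen} directly to the projective complete intersection $I(\mathbb{T})=(t_2^{q-1}-t_1^{q-1},\,t_3^{q-1}-t_1^{q-1})$ rather than dehomogenizing to $(K^*)^2$ first, so the normalization worry you flag does not arise.
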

\begin{proof} The case $1\leq d \leq q-2$ was shown in \cite[Theorem~2]{GRH}. To
show the second case assume that $q-1\leq d \leq 2q-5$. By
Corollary~\ref{sept15-09}, the vanishing ideal $I(\mathbb{T})$ is a
complete intersection generated by $t_2^{q-1}-t_1^{q-1}$ and
$t_3^{q-1}-t_1^{q-1}$. Therefore the inequality 
$\delta_{d} \geq 2q-d-3$ is a direct consequence of 
\cite[Theorem 4.4]{hansen}. Next, we write $d=(q-2)+\ell$ where
$1 \leq \ell \leq q-3$. Let $\beta$ be a generator of $(K^*,\,\cdot\,
)$. The homogeneous polynomial 
$$
F=(\beta t_{1}-t_2)\cdots (\beta^{(q-2)}
t_{1} - t_2)(\beta t_{1}-t_3)\cdots (\beta^{\ell} t_{1} - t_3)
$$
has
degree $d$ and the zero set $Z_{\mathbb{T}}(F)$ of $F$ in
$\mathbb{T}$ is the set:       
$$
\begin{array}{c}
(\{1\}\times \{ \beta^i \}_{i=1}^{q-2} \times K^*)\cup (\{1\}\times
\{1\} \times \{ \beta^i \}_{i=1}^{\ell}). 
\end{array}
$$
Therefore the number of zeros of $F$ in $\mathbb{T}$ is given by
\begin{eqnarray*}
|Z_{\mathbb{T}}(F)|&=&(q-2)(q-1)+\ell.
\end{eqnarray*}
This implies that
$$
\delta_{d}\leq(q-1)^2 - ((q-2)(q-1)+\ell)=2q-d-3.
$$
Thus $\delta_d=2q-d-3$. Finally, since the vanishing ideal of
$\mathbb{T}$ is a complete intersection, the regularity index of
$K[t_1,t_2,t_3]/I(\mathbb{T})$ is equal to $2(q-2)$. Thus by the
Singleton bound we get that $\delta_d=1$ for $d\geq 2q-4$. 
\end{proof}

The lower bound of Hansen \cite[Theorem 4.4]{hansen}---for 
the minimum distance of evaluation codes
on complete intersections---that we used in the proof above 
has been nicely generalized in \cite[Theorem~3.2]{gold-little-schenck}. 

\bibliographystyle{plain}

\end{document}